\DeclareRobustCommand{\cev}[1]{%
  \mathpalette\do@cev{#1}%
}
\newcommand{\do@cev}[2]{%
  \fix@cev{#1}{+}%
  \reflectbox{$\m@th#1\vec{\reflectbox{$\fix@cev{#1}{-}\m@th#1#2\fix@cev{#1}{+}$}}$}%
  \fix@cev{#1}{-}%
}
\newcommand{\fix@cev}[2]{%
  \ifx#1\displaystyle
    \mkern#23mu
  \else
    \ifx#1\textstyle
      \mkern#23mu
    \else
      \ifx#1\scriptstyle
        \mkern#22mu
      \else
        \mkern#22mu
      \fi
    \fi
  \fi
}
\title{An Introduction to Schur Polynomials}
\author{Amritanshu Prasad}
\address{The Institute of Mathematical Sciences, Chennai.}
\address{Homi Bhabha National Institute, Mumbai.}
\email{amri@imsc.res.in}
\newtheorem{theorem}{Theorem}[subsection]
\newtheorem{lemma}[theorem]{Lemma}
\newtheorem{corollary}[theorem]{Corollary}
\theoremstyle{definition}
\newtheorem{definition}[theorem]{Definition}
\theoremstyle{example}
\newtheorem{example}[theorem]{Example}
\newtheorem{exercise}[theorem]{Exercise}
\newcommand{\ev}{\textup{ev}}
\newcommand{\rins}{\iota}
\newcommand{\ins}{\textup{INSERT}}
\newcommand{\del}{\textup{DELETE}}
\newcommand{\rdel}{\partial}
\newcommand{\pl}{\textup{Pl}}
\newcommand{\supp}{\textup{supp}}
\newcommand{\wt}{\textup{wt}}
\newcommand{\shape}{\textup{shape}}
\newcommand{\abc}{\textup{Abc}}
\newcommand{\Tab}{\textup{Tab}}
\newcommand{\rsk}{\textup{RSK}}
\newcommand{\bur}{\textup{BUR}}
\renewcommand{\circled}[1]{\raisebox{.5pt}{\textcircled{\raisebox{-.9pt} {#1}}}}
\DeclareMathOperator{\tab}{Tab}
\begin{document}
\maketitle
\setcounter{tocdepth}{2}
\tableofcontents
\subsection{Symmetric Polynomials}
\label{sec:symmetric-functions}
Consider polynomials in $n$ variables \linebreak $x_1,\dotsc,x_n$ having integer coefficients.
Given a multiindex $\alpha=(\alpha_1,\dotsc, \alpha_n)$, let $x^\alpha$ denote the monomial $x_1^{\alpha_1}\dotsb x_n^{\alpha_n}$.
A \emph{symmetric polynomial} is a polynomial of the form
\begin{displaymath}
  f(x_1,\dotsc, x_n) = \sum_{\alpha} c_\alpha x^\alpha, \text{ with } c_\alpha\in \mathbf Z,
\end{displaymath}
where, for any permutation $w\in S_n$,
\begin{displaymath}
  c_{(\alpha_1,\dotsc,\alpha_n)} = c_{(\alpha_{w(1)},\dotsc,\alpha_{w(n)})}.
\end{displaymath}
The integer partition $\lambda$ obtained by sorting the coordinates of $\alpha$  is called the \emph{shape} of $\alpha$, denoted $\lambda(\alpha)$.
The most obvious example of a symmetric polynomial in $n$ variables is the \emph{monomial symmetric polynomial}, defined for each integer partition $\lambda$:
\begin{displaymath}
  m_\lambda = \sum_{\lambda(\alpha) = \lambda} x^\alpha.
\end{displaymath}
Note that $m_\lambda$ is homogeneous of degree $|\lambda|$ (the sum of the parts of $\lambda$).
\begin{exercise}
  Take $n=4$. Compute the monomial symmetric polynomials $m_{(3)}$, $m_{(2,1)}$, and $m_{(1^3)}$.
\end{exercise}
\begin{theorem}
The polynomials $m_\lambda(x_1,\dotsc,x_n)$, as $\lambda$ runs over all the integer partitions of $d$, form a basis for the space of homogeneous symmetric polynomials of degree $d$ in $n$ variables.
\end{theorem}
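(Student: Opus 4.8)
The plan is to verify the two defining properties of a basis—spanning and linear independence—by reducing both to the standard fact that the monomials $x^\alpha$ with $|\alpha|=d$ are linearly independent and span the space of all homogeneous polynomials of degree $d$ in $n$ variables. The structural observation that drives everything is that the shape map $\alpha\mapsto\lambda(\alpha)$ partitions the set of multiindices $\alpha$ with $|\alpha|=d$ into fibers indexed by partitions $\lambda$ of $d$, and that $m_\lambda$ is exactly the sum of the monomials $x^\alpha$ over the fiber above $\lambda$. Consequently the supports of distinct $m_\lambda$ are disjoint, and $m_\lambda$ is nonzero precisely when $\lambda$ has at most $n$ parts (otherwise no length-$n$ multiindex sorts to $\lambda$), so the genuinely relevant index set is the partitions of $d$ into at most $n$ parts.

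For spanning, I would take an arbitrary homogeneous symmetric polynomial $f=\sum_\alpha c_\alpha x^\alpha$ of degree $d$ and apply the symmetry condition directly. The hypothesis $c_{(\alpha_1,\dotsc,\alpha_n)}=c_{(\alpha_{w(1)},\dotsc,\alpha_{w(n)})}$ says precisely that $c_\alpha$ depends only on the shape $\lambda(\alpha)$; writing $c_\lambda$ for this common value, I can regroup the sum by shape as $f=\sum_\lambda c_\lambda\sum_{\lambda(\alpha)=\lambda}x^\alpha=\sum_\lambda c_\lambda m_\lambda$, with $\lambda$ ranging over partitions of $d$. Since the $c_\lambda$ are integers, this exhibits $f$ as a $\mathbf Z$-linear combination of the $m_\lambda$, establishing spanning.

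For linear independence, suppose $\sum_\lambda c_\lambda m_\lambda=0$. Expanding each $m_\lambda$ into its constituent monomials and using the disjointness of supports noted above, the coefficient of a single monomial $x^\alpha$ on the left-hand side is exactly $c_{\lambda(\alpha)}$. Because the monomials $x^\alpha$ are themselves linearly independent, every such coefficient must vanish, forcing $c_\lambda=0$ for each $\lambda$ that admits a length-$n$ multiindex. Together with spanning, this yields the claimed basis.

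I do not expect a deep obstacle here—the argument is essentially organized bookkeeping around the shape map—so the one point genuinely requiring care is the edge case on the number of parts. For the $m_\lambda$ to form a basis one should restrict to partitions of $d$ into at most $n$ parts, since $m_\lambda=0$ whenever $\lambda$ has more than $n$ parts; the statement is cleanest when read with this restriction (automatic when $n\ge d$). With that understood, the same argument also computes the dimension of the space as the number of partitions of $d$ into at most $n$ parts.
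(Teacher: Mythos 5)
Your proof is correct and is the standard argument; the paper states this theorem without proof, and your regrouping-by-shape argument for spanning together with the disjoint-support argument for independence is exactly what is intended. Your remark about the edge case is also well taken: as literally stated the theorem needs the restriction to partitions of $d$ with at most $n$ parts (automatic when $n\geq d$), since $m_\lambda=0$ when $\lambda$ has more than $n$ parts.
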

\subsection{Complete and Elementary Symmetric Polynomials}
\label{sec:compl-elem-symm}
Recall that the coefficients of a polynomial are symmetric polynomials in its roots:
\begin{multline}
  \label{eq:elem-id}
  (t-x_1)(t-x_2)\dotsb (t-x_n) \\= t^n - e_1(x_1,\dotsc, x_n)t^{n-1} + \dotsb + (-1)^n e_n(x_1,\dotsc, x_n),
\end{multline}
where the expression $e_i(x_1,\dotsc, x_n)$ in the coefficient of $t^{n-i}$ is given by:
\begin{equation}
  \label{eq:elem}
  e_i(x_1,\dotsc, x_n) = \sum_{1\leq j_1<\dotsb<j_i\leq n} x_{j_1}x_{j_2}\dotsb x_{j_i}.
\end{equation}
The polynomial $e_i$ is called the $i$th \emph{elementary symmetric polynomial}.
By convention, $e_i(x_1,\dotsc,x_n)=0$, for $i>n$.

The identity (\ref{eq:elem-id}) can be written more elegantly as:
\begin{displaymath}
  (1+t x_1) \dotsb (1+tx_n) = \sum_{i=0}^\infty e_i(x_1,\dotsb, x_n)t^i.
\end{displaymath}

Dually, the \emph{complete symmetric polynomials} are defined by the formal identity:
\begin{displaymath}
  \frac 1{(1-x_1t)\dotsb (1-x_nt)} = \sum_{i=0}^\infty h_i(x_1,\dotsb, x_n)t^i.
\end{displaymath}
\begin{example}
  In three variables:
  \begin{align*}
    e_2(x_1,x_2,x_3) & = x_1x_2 + x_1x_3 + x_2x_3,\\
    h_2(x_1,x_2,x_3) & = x_1^2 + x_1x_2 + x_1x_3 + x_2^2 + x_2x_3 + x_3^2.
  \end{align*}
\end{example}
\begin{exercise}
  Show that
  \begin{displaymath}
    h_i(x_1,\dotsc, x_n) = \sum_{1\leq j_1\leq \dotsb \leq j_i\leq n} x_{j_1}\dotsb x_{j_i},
  \end{displaymath}
  and that
  \begin{displaymath}
    e_i(x_1,\dotsc, x_n) = \sum_{1\leq j_1< \dotsb < j_i\leq n} x_{j_1}\dotsb x_{j_i}.
  \end{displaymath}
\end{exercise}
More generally, for any integer partition $\lambda=(\lambda_1,\dotsc, \lambda_l)$, define:
\begin{align*}
  h_\lambda &= h_{\lambda_1} h_{\lambda_2}\dotsb h_{\lambda_l},\\
  e_\lambda &= e_{\lambda_1} e_{\lambda_2}\dotsb e_{\lambda_l}.
\end{align*}
\begin{theorem}
  Given partitions $\lambda=(\lambda_1,\dotsc,\lambda_l)$ and $\mu=(\mu_1,\dotsb, \mu_m)$ of $d$, let $M_{\lambda\mu}$ denote the number of matrices $(a_{ij})$ with non-negative integer entries whose $i$th row sums to $\lambda_i$ for each $i$, and whose $j$th column sums to $\mu_j$ for each $j$.
  Then
  \begin{displaymath}
    h_\lambda = \sum_\mu M_{\lambda\mu} m_\mu.
  \end{displaymath}
  Dually, let $N_{\lambda\mu}$ denote the number of integer matrices $(a_{ij})$ with entries $0$ or $1$, whose $i$th row sums to $\lambda_i$ for each $i$, and whose $j$th column sums to $\mu_j$ for each $j$.
  Then
  \begin{displaymath}
    e_\lambda = \sum_\mu N_{\lambda\mu} m_\mu.
  \end{displaymath}
\end{theorem}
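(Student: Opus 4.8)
The plan is to expand the product defining $h_\lambda$ directly and read off the coefficient of each monomial, matching it with a matrix count. The essential input is the formula from the preceding exercise, which I would first rewrite in exponent form: a weakly increasing sequence $1\le j_1\le\dotsb\le j_r\le n$ is the same data as a vector of nonnegative integers $(a_1,\dotsc,a_n)$ with $a_1+\dotsb+a_n=r$, where $a_j$ records how many times the value $j$ occurs, so that
\begin{displaymath}
  h_r(x_1,\dotsc,x_n)=\sum_{a_1+\dotsb+a_n=r} x_1^{a_1}\dotsb x_n^{a_n}.
\end{displaymath}

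Next I would multiply these together for $r=\lambda_1,\dotsc,\lambda_l$. A term in the product $h_{\lambda_1}\dotsb h_{\lambda_l}$ is obtained by choosing, for each $i$, an exponent vector $(a_{i1},\dotsc,a_{in})$ summing to $\lambda_i$; this is exactly a nonnegative integer matrix $(a_{ij})$ whose $i$th row sums to $\lambda_i$. The monomial contributed is $\prod_i\prod_j x_j^{a_{ij}}=\prod_j x_j^{\sum_i a_{ij}}$, so the exponent of $x_j$ in the resulting monomial is the $j$th column sum. Collecting terms, the coefficient of a fixed monomial $x_1^{c_1}\dotsb x_n^{c_n}$ in $h_\lambda$ is precisely the number of such matrices whose column sums are $(c_1,\dotsc,c_n)$.

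Since $h_\lambda$ is symmetric, I can recover its $m_\mu$-expansion by evaluating this coefficient at one representative monomial per shape. Taking $x_1^{\mu_1}\dotsb x_m^{\mu_m}$, that is $c_j=\mu_j$ for $j\le m$ and $c_j=0$ otherwise, the columns indexed by $j>m$ are forced to be zero and contribute no freedom, so the count reduces to matrices of size $l\times m$ with row sums $\lambda_i$ and column sums $\mu_j$, which is $M_{\lambda\mu}$. This proves the first identity. For the dual statement I would repeat the argument verbatim with $e_r$ in place of $h_r$: the strict inequalities $1\le j_1<\dotsb<j_r\le n$ correspond to $0$--$1$ exponent vectors, so each factor contributes a row whose entries lie in $\{0,1\}$, and the coefficient of $x^\mu$ becomes the number of $0$--$1$ matrices with the prescribed row and column sums, namely $N_{\lambda\mu}$.

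I expect the only real subtlety to be the bookkeeping in the final step: justifying that the $n-m$ extra zero columns may be discarded, and that reading off the coefficient of the single padded monomial $x^\mu$ genuinely computes the coefficient of $m_\mu$. Both points follow from the symmetry of $h_\lambda$ and $e_\lambda$ together with the observation that $M_{\lambda\mu}$ and $N_{\lambda\mu}$ depend only on the multiset of column sums, so permuting or deleting zero columns does not change the count.
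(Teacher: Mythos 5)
Your proposal is correct and follows essentially the same route as the paper: expand the product factor by factor, encode each choice of summands as a matrix whose $i$th row records the exponents chosen from the $i$th factor, and observe that row sums give $\lambda$ while column sums give the exponent vector of the resulting monomial. The only cosmetic differences are that you treat $h_\lambda$ first and $e_\lambda$ by analogy (the paper does the reverse), and you spell out the bookkeeping about zero columns and representative monomials a bit more explicitly than the paper does.
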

\begin{proof}
  To prove the second identity involving elementary symmetric polynomials, note that a monomial in the expansion of
  \begin{displaymath}
    e_\lambda = \prod_{i=1}^l \sum_{j_1<\dotsb<j_{\lambda_i}}x_{j_1}\dotsb x_{j_{\lambda_i}}
  \end{displaymath}
  is a product of summands, one chosen from each of the $l$ factors.
  Construct an $l\times m$ matrix $(a_{ij})$ corresponding to such a choice as follows:
  if the summand $x_{j_1}\dotsb x_{j_{\lambda_i}}$ is chosen from the $i$th factor, then set the entries $a_{i,j_1},\dotsc, a_{i, j_{\lambda_j}}$ to be $1$ (the remaining entries of the $i$th row are $0$).
  Clearly the $i$th row of such a matrix sums to $\lambda_i$.
  The monomial corresponding to this choice is $x^\mu$ if, for each $j$, the the number of $i$ for which $x_j$ appears in the monomial corresponding to the $i$th row is $\mu_j$. This is just the sum of the $j$th column of the matrix $(a_{ij})$.
  It follows that the coefficient of $x^\mu$, and hence the coefficient of $m_\mu$ in the expansion of $e_\lambda$ in the basis of monomial symmetric polynomials of degree $n$, is $N_{\lambda\mu}$.

A similar proof can be given for the first identity involving complete symmetric polynomials. The only difference is that variables may be repeated in the monomials that appear in $h_i$. Counting the number of repetitions (instead of just recording $0$ or $1$) gives non-negative integer matrices.
\end{proof}
\subsection{Alternating Polynomials}
\label{sec:alt-poly}
An \emph{alternating polynomial} in $x_1,\dotsc, x_n$ is of the form:
\begin{equation}
  \label{eq:alt-form}
  f(x_1,\dotsc,x_n) = \sum_{\alpha} c_\alpha x_\alpha,
\end{equation}
where, $c_{(\alpha_{w(1)},\dotsc,\alpha_{w(n)})} = \epsilon(w)c_{(\alpha_1,\dotsc,\alpha_n)}$ for every multiindex $\alpha$ as in Section~\ref{sec:symmetric-functions}, and every permutation $w\in S_n$.
Here $\epsilon:S_n\to \{\pm 1\}$ denotes the sign function.
Equivalently, an alternating polynomial is one whose sign is reversed upon the interchange of any two variables.
\begin{exercise}
  If $\alpha$ is a multiindex with $\alpha_i=\alpha_j$ for some $i\neq j$, then $c_\alpha = 0$.
\end{exercise}
In particular, every monomial in an alternating polynomial must be composed of distinct powers.
Moreover, the polynomial is completely determined by the coefficients with strictly decreasing multiindices, namely, multiindices of the form $c_\alpha$, where $\alpha=(\alpha_1,\dotsc,\alpha_n)$ with $\alpha_1>\dotsb>\alpha_n$.
\begin{exercise}
  Let $\delta=(n-1,n-2,\dotsc,1, 0)$.
  Given an integer partition with at most $n$ parts, we will pad it with $0$'s so that it can be regarded as a weakly decreasing multiindex of length $n$.
  Then $\lambda\mapsto \lambda+\delta$ is a bijection from the set of integer partitions with at most $n$ parts onto the set of strictly decreasing multiindices.
\end{exercise}
\begin{example}
  Let $\lambda = (\lambda_1,\dotsc, \lambda_n)$ be a weakly decreasing multiindex.
  The \emph{alternant} corresponding to $\lambda$, which is defined as:
  \begin{displaymath}
    a_{\lambda+\delta} = \det(x_i^{\lambda_j + n - j})
  \end{displaymath}
  is alternating, with unique strictly decreasing monomial $x^{\lambda+\delta}$.
\end{example}
\begin{exercise}
  \label{exercise:alt-basis}
  The alternating polynomial of the form \textup{(\ref{eq:alt-form})} is equal to  \begin{displaymath}
    \sum_{\lambda} c_{\lambda+\delta} a_{\lambda+\delta},
  \end{displaymath}
  the sum being over all weakly decreasing multiindices $\lambda$.
\end{exercise}
\subsection{Interpretation of Alternants with Labeled Abaci}
\label{sec:abaci}
A labeled abacus with $n$ beads is a word $w=(w_k; k\geq 0)$ with letters $w_i\in \{0,\dotsc,n\}$ such that the subword of non-zero letters is a permutation of $1,2,\dotsc,n$.
The sign $\epsilon(w)$ of the abacus is the sign of this permutation, the support is the set $\supp(w)=\{k\mid w_k>0\}$, and the weight is defined as:
\begin{displaymath}
  \wt(w) = \prod_{k\in \supp(w)} x_{w_k}^k.
\end{displaymath}
The shape of the abacus, $\shape(w)$ is the unique partition $\lambda$ such that the components of $\lambda+\delta$ form the support of $w$.
\begin{example}
  Consider the labeled abacus $w=510032046000\dotsb$.
  Its underlying permutation is $513246$, which has sign $-1$, so $\epsilon(w)=-1$.
  Also, $\supp(w)=\{0,1,4,5,7,8\}$, $\shape(w)=(3,3,2,2)$ (indeed, $(3,3,2,2,0,0)+(5,4,3,2,1,0)=(8,7,5,4,1,0)$) and $\wt(w)=x_5^0x_1^1x_3^4x_2^5x_4^7x_6^8$.
  We visualize the abacus $w$ as a configuration of beads on a single runner, with possible positions of beads numbered $1, 2, 3, \dotsc$.
  If $w_k=i$ where $i>0$, then a bead labeled $i$ is placed in position $k$ on the runner.
  If $w_k=0$, then the position $k$ is unoccupied.
  In the running example the visualization is:
  \begin{displaymath}
    \begin{tabular}{*{10}{p{0.7cm}}}
      0 & 1 & 2 & 3 & 4 & 5 & 6 & 7 & 8 & 9\\
      \circled{5} & \circled{1} & $\bullet$ & $\bullet$ & \circled{3} & \circled{2} & $\bullet$ & \circled{4} & \circled{6} & $\bullet$
    \end{tabular}
  \end{displaymath}
  The first row shows the positions $k=0,1,\dotsc$ on the runner and the second row shows the beads.
\end{example}
\begin{theorem}
  \label{theorem:abacus-alt}
  For every partition $\lambda$ the alternant in $n$  variables,
  \begin{displaymath}
    a_{\lambda+\delta} = (-1)^{\lfloor n/2\rfloor} \sum_w \epsilon(w)\wt(w),
  \end{displaymath}
  the sum being over all labeled abaci with $n$ beads and shape $\lambda$.
\end{theorem}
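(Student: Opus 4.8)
The plan is to expand the determinant defining $a_{\lambda+\delta}$ as a signed sum over permutations and match it against the abacus sum, after setting up an explicit bijection between labeled abaci of shape $\lambda$ and permutations in $S_n$. Writing $\mu=\lambda+\delta$, so that $\mu_j=\lambda_j+n-j$ and $\mu_1>\mu_2>\dotsb>\mu_n\geq 0$, the expansion of the determinant along columns gives
\[
  a_{\lambda+\delta} = \det(x_i^{\mu_j}) = \sum_{\sigma\in S_n}\epsilon(\sigma)\prod_{j=1}^n x_{\sigma(j)}^{\mu_j}.
\]

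Next I would unwind the abacus sum. By the definition of shape, every labeled abacus $w$ of shape $\lambda$ has $\supp(w)=\{\mu_1,\dotsc,\mu_n\}$; the only remaining freedom is the assignment of the bead labels $1,\dotsc,n$ to these $n$ fixed positions, so such abaci are in bijection with $S_n$. Listing the support in increasing order as $p_1<\dotsb<p_n$ (so $p_i=\mu_{n+1-i}$) and writing $\pi(i)$ for the label of the bead at position $p_i$, the subword of non-zero letters read left to right is precisely $\pi(1)\pi(2)\dotsb\pi(n)$, whence $\epsilon(w)=\epsilon(\pi)$ and $\wt(w)=\prod_{i=1}^n x_{\pi(i)}^{\mu_{n+1-i}}$. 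Summing over all abaci of shape $\lambda$ then gives $\sum_w \epsilon(w)\wt(w)=\sum_{\pi\in S_n}\epsilon(\pi)\prod_i x_{\pi(i)}^{\mu_{n+1-i}}$.

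The two expressions are reconciled by reindexing. Setting $j=n+1-i$ and letting $r$ denote the reversal permutation $r(j)=n+1-j$, I would substitute $\sigma=\pi r$; then $\pi(n+1-j)=\sigma(j)$, so the product becomes $\prod_j x_{\sigma(j)}^{\mu_j}$, while $\epsilon(\pi)=\epsilon(\sigma)\epsilon(r)$ since $\epsilon(r^{-1})=\epsilon(r)$. As $\pi$ ranges over $S_n$ so does $\sigma$, giving $\sum_w \epsilon(w)\wt(w)=\epsilon(r)\,a_{\lambda+\delta}$. The final ingredient is the sign of the reversal: $r$ has $\binom{n}{2}$ inversions, so $\epsilon(r)=(-1)^{n(n-1)/2}$, and a short parity check (writing $n=2m$ and $n=2m+1$ separately and using that odd factors do not affect parity) shows $(-1)^{n(n-1)/2}=(-1)^{\lfloor n/2\rfloor}$. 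Since this sign equals its own inverse, multiplying through yields the claimed formula.

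The only delicate point, and the one I expect to be the main obstacle, is the sign bookkeeping. The abacus reads its beads in order of \emph{increasing} position, whereas the determinant indexes its columns by the exponents $\mu_j$ in \emph{decreasing} order; the mismatch is exactly the reversal $r$, and correctly attributing the factor $(-1)^{\lfloor n/2\rfloor}$ to it---rather than dropping it or miscomputing its parity---is where all the care lies. Everything else is a routine unwinding of the definitions of $\epsilon(w)$, $\wt(w)$, and $\shape(w)$.
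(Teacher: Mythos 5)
Your proof is correct and follows exactly the route the paper intends: the paper's own proof is the single sentence ``The theorem follows from the expansion of the determinant,'' and your argument is precisely that expansion carried out in full, with the bijection between abaci of shape $\lambda$ and permutations and the sign $\epsilon(r)=(-1)^{\binom n2}=(-1)^{\lfloor n/2\rfloor}$ of the reversal handled correctly. Nothing to add.
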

\begin{proof}
  The theorem follows from the expansion of the determinant.
\end{proof}
\subsection{Cauchy's Bialternant Form of a Schur Polynomial}
\label{sec:cauchys-bialt-form}
The lowest degree polynomial of the form $a_{\lambda+\delta}$ arises when $\lambda=0$; $a_\delta$ is the Vandermonde determinant:
\begin{displaymath}
  a_\delta = \prod_{1\leq i<j\leq n}(x_i-x_j).
\end{displaymath}
\begin{exercise}
  Show that, for every weakly decreasing multiindex $\lambda$, $a_{\lambda+\delta}$ is divisible by $a_\delta$ in the ring of polynomials in $x_1,\dotsc,x_n$.
\end{exercise}
\begin{exercise}
  \label{exercise:vandermonde-iso}
  Show that $f\mapsto fa_\delta$ is an isomorphism of the space of homogeneous symmetric polynomials in $x_1,\dotsc, x_n$ of degree $d$ onto the space of homogeneous alternating polynomials of degree $d + \binom n2$.
\end{exercise}
This motivates the historically oldest definition of Schur polynomials---\emph{Cauchy's bialternant formula}:
\begin{equation}
  \label{eq:schur}
  s_\lambda(x_1,\dotsc,x_n) = a_{\lambda+\delta}/a_\delta,
\end{equation}
for any partition $\lambda$ with at most $n$ parts.
If $\lambda$ has more than $n$ parts, set $s_\lambda(x_1,\dotsc,x_n) =0$.
This is clearly a symmetric polynomial of degree $|\lambda|$.
\begin{theorem}
  As $\lambda$ runs over all integer partitions of $d$ with at most $n$ parts, the Schur polynomials $s_\lambda(x_1,\dotsc,x_n)$ form a basis of the space of all homogeneous symmetric polynomials in $x_1,\dotsc,x_n$ of degree $d$.
\end{theorem}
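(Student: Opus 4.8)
The plan is to transport the statement across the linear isomorphism $f\mapsto f a_\delta$ established in Exercise~\ref{exercise:vandermonde-iso}, which identifies the space of homogeneous symmetric polynomials of degree $d$ with the space of homogeneous alternating polynomials of degree $d+\binom n2$. The crucial observation is that this isomorphism carries each Schur polynomial \emph{directly} onto an alternant: by the defining formula~(\ref{eq:schur}), we have $s_\lambda\cdot a_\delta = a_{\lambda+\delta}$. Since a linear isomorphism sends bases to bases, it suffices to prove that the alternants $a_{\lambda+\delta}$, as $\lambda$ ranges over the partitions of $d$ with at most $n$ parts, form a basis of the space of homogeneous alternating polynomials of degree $d+\binom n2$.

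First I would verify spanning. By Exercise~\ref{exercise:alt-basis}, every alternating polynomial is a linear combination $\sum_\lambda c_{\lambda+\delta}a_{\lambda+\delta}$ over weakly decreasing multiindices $\lambda$. For a \emph{homogeneous} alternating polynomial of degree $d+\binom n2$, only the terms with $|\lambda+\delta| = d+\binom n2$ can contribute; since $|\delta|=\binom n2$, these are exactly the $\lambda$ with $|\lambda|=d$, and each such $\lambda$ has at most $n$ parts once padded to length $n$. Thus the listed alternants already span the target space, and the bookkeeping about degrees and about padding partitions to length $n$ is precisely what makes the index set match the one in the theorem.

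Next comes linear independence. As noted when the alternant was introduced, $a_{\lambda+\delta}=\det(x_i^{\lambda_j+n-j})$ has a unique strictly decreasing monomial, namely $x^{\lambda+\delta}$, appearing with coefficient $1$ (the diagonal term of the determinant). By the bijection $\lambda\mapsto\lambda+\delta$ of the earlier exercise, distinct partitions $\lambda$ yield distinct strictly decreasing multiindices, hence distinct monomials $x^{\lambda+\delta}$. Reading off the coefficient of each $x^{\lambda+\delta}$ in a vanishing combination $\sum_\lambda c_\lambda a_{\lambda+\delta}=0$ then forces every $c_\lambda=0$. Together with spanning, this shows the alternants form a basis; pulling back through the isomorphism of Exercise~\ref{exercise:vandermonde-iso} shows that the polynomials $s_\lambda$ form a basis of the homogeneous symmetric polynomials of degree $d$.

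The only step requiring genuine care is the independence argument, and even there the work is light: the whole proof rests on the single fact that a nonzero alternating polynomial must have a nonzero strictly decreasing coefficient, which renders the change of basis between $\{a_{\lambda+\delta}\}$ and the monomials $\{x^{\lambda+\delta}\}$ unitriangular. I expect the main place to be deliberate is matching the two index sets exactly — confirming that the partitions of $d$ with at most $n$ parts appearing on the symmetric side correspond, under $\lambda\mapsto\lambda+\delta$, to all the strictly decreasing multiindices of the correct total degree on the alternating side.
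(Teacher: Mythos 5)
Your proposal is correct and is exactly the argument the paper intends: the paper's proof simply cites Exercises~\ref{exercise:alt-basis} and~\ref{exercise:vandermonde-iso}, and your write-up supplies the details of transporting the basis of alternants across the isomorphism $f\mapsto fa_\delta$. The degree bookkeeping and the unitriangularity observation for independence are both sound.
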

\begin{proof}
  This follows from Exercises~\ref{exercise:alt-basis} and~\ref{exercise:vandermonde-iso}.
\end{proof}
\begin{exercise}
  [Stability of Schur polynomials]
  Show that substituting $x_n=0$ in the Schur polynomial $s_\lambda(x_1,\dotsc, x_n)$ with $n$ variables gives the corresponding Schur polynomial $s_\lambda(x_1,\dotsc,x_{n-1})$ with $n-1$ variables.
\end{exercise}
\subsection{Pieri's rule}
\label{sec:pieri}
The set of integer partitions is endowed with the \emph{containment order}.
We say that a partition $\lambda=(\lambda_1,\dotsc,\lambda_l)$ \emph{contains} a partition $\mu=(\mu_1,\dotsc, \mu_m)$ if $l \geq m$, and $\lambda_i\geq \mu_i$ for every $i=1,\dotsb, m$.
We write $\lambda\supset\mu$ or $\mu \subset \lambda$.
Recall that the Young diagram of the partition $\lambda$ is the set of points 
\begin{displaymath}
\{(i, j)\mid 1\leq i\leq l,\; 1\leq j\leq \lambda_i\}.
\end{displaymath}
Visually, each node $(i,j)$ of the Young diagram is replaced by a box, and the box corresponding to $(i,j)$ is placed in the $i$th row and $j$th column (matrix notation).
Thus, the Young diagram of $\lambda=(6, 5, 3, 3)$ is:
\ytableausetup{smalltableaux}
\begin{displaymath}
  \ydiagram{6,5,3,3}
\end{displaymath}
The containment of partitions is nothing but the containment relation on their Young diagrams.
Henceforth, for a partition $\lambda$, the symbol $\lambda$ will also be used to refer to its Young diagram.

A \emph{skew-shape} is a difference of Young diagrams $\lambda \setminus \mu$, where $\lambda \supset \mu$.
Write $\lambda/\mu$ for this skew-shape.
A skew-shape is called a \emph{horizontal strip} (respectively, a \emph{vertical strip}) if it has at most one box in each vertical column (respectively, horizontal row).
\begin{theorem}
  For every partition $\lambda$, and every positive integer $k$,
  \begin{displaymath}
    s_\lambda h_k = \sum_\mu s_\mu,
  \end{displaymath}
  where the sum runs over all partitions $\mu\supset\lambda$ such that $\mu/\lambda$ is a horizontal strip of size $k$.
  Dually,
  \begin{displaymath}
    s_\lambda e_k = \sum_\mu s_\mu,
  \end{displaymath}
  where the sum runs over all partitions $\mu\supset\lambda$ such that $\mu/\lambda$ is a vertical strip of size $k$.
\end{theorem}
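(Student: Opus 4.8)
The plan is to deduce both identities from Cauchy's bialternant formula~(\ref{eq:schur}). Since $s_\lambda = a_{\lambda+\delta}/a_\delta$ and $a_\delta$ is a common nonzero factor, it suffices to prove the alternant identities $a_{\lambda+\delta}h_k = \sum_\mu a_{\mu+\delta}$ and $a_{\lambda+\delta}e_k=\sum_\mu a_{\mu+\delta}$ over the stated strips. It is cleanest to handle all $k$ at once through the generating functions $H(t)=\sum_k h_k t^k = \prod_i(1-x_it)^{-1}$ and $E(t)=\sum_k e_k t^k = \prod_i(1+x_it)$, and to extract the coefficient of $t^k$ at the end (this is what will enforce the size condition $|\mu|-|\lambda|=k$).

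Writing $b_j=\lambda_j+n-j$, so that $a_{\lambda+\delta}=\det(x_i^{b_j})$, I would first note that multiplying row $i$ by $(1-x_it)^{-1}$ scales the determinant by that factor, giving $H(t)\,a_{\lambda+\delta}=\det\bigl(x_i^{b_j}/(1-x_it)\bigr)$. Expanding each entry as $x_i^{b_j}/(1-x_it)=\sum_{c\geq b_j}x_i^c t^{c-b_j}$ and using multilinearity in the columns, I obtain $H(t)\,a_{\lambda+\delta}=\sum_{c_1\geq b_1,\dotsc,c_n\geq b_n} t^{\sum_j(c_j-b_j)}\det(x_i^{c_j})$. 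A term vanishes unless $c_1,\dotsc,c_n$ are distinct; grouping the survivors by the partition $\mu$ whose decreasing rearrangement of $(c_j)$ equals $\mu+\delta=(d_1>\dotsb>d_n)$, and writing $c_j=d_{\tau(j)}$ so that $\det(x_i^{c_j})=\epsilon(\tau)a_{\mu+\delta}$ while $\sum_j(c_j-b_j)=|\mu|-|\lambda|$, gives $H(t)\,a_{\lambda+\delta}=\sum_\mu\bigl(\sum_\tau\epsilon(\tau)\bigr)t^{|\mu|-|\lambda|}a_{\mu+\delta}$, the inner sum running over permutations $\tau$ with $d_{\tau(j)}\geq b_j$ for all $j$.

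The crux is thus the purely combinatorial evaluation of this signed count, which is exactly $\det M$ for the $0$–$1$ matrix $M=([d_i\geq b_j])_{i,j}$, and I expect this determinant lemma to be the main obstacle; everything before and after is formal manipulation. I would show $\det M=1$ when $\mu/\lambda$ is a horizontal strip and $0$ otherwise. Because both sequences decrease strictly, column $j$ of $M$ is the indicator of rows $1,\dotsc,r_j$ where $r_j=\#\{i:d_i\geq b_j\}$, and $r_1\leq\dotsb\leq r_n$. Applying the column operations $C_j\mapsto C_j-C_{j-1}$ (for $j$ from $n$ down to $2$, so the determinant is unchanged) turns column $j$ into the indicator of the block of rows $(r_{j-1},r_j]$, with $r_0=0$. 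These $n$ blocks are disjoint and consecutive, covering $\{1,\dotsc,r_n\}$, so $\det M\neq 0$ forces every block to be nonempty and forces $r_n=n$ (otherwise there is a zero row); strictly increasing integers $1\leq r_1<\dotsb<r_n=n$ must be $r_j=j$, in which case the reduced matrix is the identity and $\det M=1$. Unwinding $r_j=j$ yields $d_j\geq b_j>d_{j+1}$, i.e.\ $\mu_j\geq\lambda_j$ and $\lambda_j\geq\mu_{j+1}$, which is precisely the interlacing that characterizes $\mu\supset\lambda$ with $\mu/\lambda$ a horizontal strip. Taking the coefficient of $t^k$ then gives the first identity.

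The dual statement follows the identical route with $E(t)$ replacing $H(t)$. Multiplying row $i$ by $(1+x_it)$ turns each entry $x_i^{b_j}$ into $x_i^{b_j}+x_i^{b_j+1}t$, so the column choices are now $c_j\in\{b_j,b_j+1\}$ and the relevant matrix becomes $N=([\,b_j\leq d_i\leq b_j+1\,])_{i,j}$. The same reduction shows $\det N=1$ exactly when $d_j-b_j\in\{0,1\}$ for all $j$, i.e.\ $0\leq\mu_j-\lambda_j\leq1$, which characterizes vertical strips, and $\det N=0$ otherwise; extracting the coefficient of $t^k$ completes the proof.
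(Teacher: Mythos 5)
Your proof is correct, but it takes a genuinely different route from the paper. The paper works with the abacus model of alternants (Theorem~\ref{theorem:abacus-alt}): it expands $a_{\lambda+\delta}h_k$ as a sum over pairs (labeled abacus, exponent vector) and cancels unwanted terms by a bead-pushing involution, where a ``collision'' of beads triggers a sign-reversing swap; the fixed points are exactly the abaci of shapes $\mu$ with $\mu/\lambda$ a horizontal (resp.\ vertical) strip. You instead stay with the determinant itself: row-scaling by $\prod_i(1-x_it)^{-1}$ or $\prod_i(1+x_it)$, column-wise expansion, and reduction of the signed multiplicity of $a_{\mu+\delta}$ to the evaluation of the $0$--$1$ determinant $\det([d_i\geq b_j])$, which you compute by column operations. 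The two arguments are secretly cognate --- the abacus is just a bookkeeping device for monomials $x^{(c_1,\dotsc,c_n)}$ with distinct exponents, and the paper's bead collisions realize exactly the cancellations your determinant lemma packages algebraically --- but your version trades the involution for a clean linear-algebra lemma and has the pleasant feature of producing all $k$ at once via the generating function, with the interlacing condition $\mu_j\geq\lambda_j\geq\mu_{j+1}$ falling out of $r_j=j$. One small caveat: in the dual case the matrix $N=([\,b_j\leq d_i\leq b_j+1\,])$ does not have the nested-column staircase structure of $M$, so ``the same reduction'' does not literally apply; but the needed fact is even easier there --- since $b_1>b_2>\dotsb$ with gaps at least $1$, the only permutation $\tau$ that can satisfy $d_{\tau(j)}\in\{b_j,b_j+1\}$ for all $j$ is the identity, whence $\det N\in\{0,1\}$ with value $1$ exactly when $\mu_j-\lambda_j\in\{0,1\}$ for all $j$. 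With that one sentence supplied, the argument is complete.
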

\begin{proof}
  Let $\abc(\lambda)$ denote the set of all $n$-bead labeled abaci (see Section~\ref{sec:abaci}) of shape $\lambda$.
  Let $M(n,k)$ denote the set of all vectors $\alpha=(\alpha_1,\dots,\alpha_n)$ with non-negative integer coordinates and sum $k$.
  Set $\wt(\alpha)=x_1^{\alpha_1}\dotsb x_n^{\alpha_n}$.
  Using the abacus interpretation of alternants (Theorem~\ref{theorem:abacus-alt}), the first identity is equivalent to showing:
  \begin{displaymath}
    \sum_{w\in \abc(\lambda)} \epsilon(w)\sum_{\alpha\in M(n,k)} \wt(\alpha) = \sum_\mu \sum_{w\in \abc(\mu)} \epsilon(w) \wt(w),
  \end{displaymath}
  the sum on the right being over all partitions $\mu\supset\lambda$ such that $\mu/\lambda$ is a horizontal strip.
  We will define an involution $I$ on the $\abc(\lambda)\times M(n,k)$ whose fixed points correspond to elements of
\begin{displaymath}
  \coprod_{\mu/\lambda \text{ is a horiz. strip of size $k$}}\abc(\mu)\times M(n,k)
\end{displaymath}
under a bijection that preserves weights and signs, and such that if $I(w, \alpha)=(w', \alpha')$ then $\wt(w)\wt(\alpha)=\wt(w')\wt(\alpha')$ and $\epsilon(w') = -\epsilon(w)$.
  Then all terms on the left hand side, except for those which do not correspond to fixed points, will cancel, and the surviving terms will give the right hand side.

  To construct $I$, scan the abacus from left to right.
  Upon encountering a bead numbered $j$, move the bead $\alpha_j$ steps to the right, one step at a time.
  If this process completes without this bead colliding with another bead, $(w,\alpha)$ is a fixed point of $I$.
  The new abacus $w^*$ has $\epsilon(w^*)=\epsilon(w)$ (the underlying permutation remains unchanged), and $\shape(w^*)/\shape(w)$ is a horizontal strip of size $k$.

  However, suppose a collision does occur, say the first collision is when bead $j$ hits bead $k$ that is located $p\leq \alpha_j$ places to the right of its initial position.
  Define $I(w,\alpha) = (w',\alpha')$, where $w'$ is $w$ with the beads $i$ and $j$ interchanged, $\alpha'_j=\alpha_j-p$, $\alpha'_k=\alpha_k+p$ and all other coordinates of $\alpha$ and $\alpha'$ are equal.
  Clearly $w'$ has the opposite sign from $w$, and $\wt(w)\wt(\alpha)=\wt(w')\wt(\alpha')$.
  It is not hard to see that $I(w',\alpha')=(w,\alpha)$.
  \begin{example}
    \label{example:bead-h}
    Let $n=6$, $\lambda=(3,3,2,2,0,0)$, $k=3$, and
    \begin{displaymath}
      (w, \alpha) = (51003204600\dotsb, (2, 1, 0, 0, 0, 0)). 
    \end{displaymath}
    Scanning the abacus from left to right, the first bead to be moved is numbered $1$.
    It can be moved $2$ places to the right without any collisions.
    After that the bead numbered $2$ can be moved $1$ place to the right, again without collisions.
    So $(w,\alpha)$ is a fixed point for $I$.
    The new abacus $50013024600\dotsb$ has shape $(3,3,3,2,2,0)$ obtained by adding a horizontal $3$-strip to $(3, 3, 2, 2, 0, 0)$.

    On the other hand, if $\alpha=(1, 1, 1, 0, 0, 0)$, then the first collision is of the bead numbered $3$ with the bead numbered $2$.
    Interchanging the beads numbered $2$ and $3$, and modifying the weights as prescribed gives $I(w,\alpha) = (51002304600\dotsb, (1,2,0,0,0,0))$.
  \end{example}

  Let $N(n,k)$ denote the set of vectors $\alpha=(\alpha_1,\dotsc,\alpha_n)$ such that $\alpha_i\in \{0,1\}$ for each $i$, and $\alpha_1+\dotsb+\alpha_n=k$.
  In terms of Abaci, the second Pieri rule becomes:
  \begin{displaymath}
    \sum_{w\in \abc(\lambda)} \epsilon(w)\sum_{\alpha\in N(n,k)} \wt(\alpha) = \sum_\mu \sum_{w\in \abc(\mu)} \epsilon(w) \wt(w),
  \end{displaymath}
  where $\mu$ runs over all partition such that $\mu/\lambda$ is a vertical strip of size $k$.
  
  We construct an involution $I$ on $\abc(\lambda)\times N(n,k)$ as follows: scan the abacus from \emph{right to left}.
  Upon encountering a bead numbered $j$, if $\alpha_j=1$, try to move the bead one step to the right.
  If this process completes without collisions, then $(w,\alpha)$ is a fixed point of $I$.
  Otherwise, if the first collision occurs with bead numbered $j$ colliding with bead numbered $k$, then define $w'$ to be $w$ with beads $j$ and $k$ interchanged.
  Also, since the $k$th bead was adjacent to the $j$th bead, it could not have been moved in its turn.
  So $\alpha_k=0$.
  Let $w'$ be the abacus obtained by interchanging beads numbered $k$ and $j$ in $w$, 
  let $\alpha'$ be obtained by interchanging $\alpha_k$ and $\alpha_j$ in $\alpha$, and set $I(w,\alpha)=(w',\alpha')$.
\end{proof}
  \begin{example}
  The pair $(51003204600\dotsb,(1,1,1,0,0,0))$ is a fixed point for $I$, and the shifted abacus is $(50100324600\dotsb)$ of shape $(3,3,3,3,1,0)$.
  On the other hand 
  \begin{displaymath}
    I(51003204600\dotsb,(0,0,1,0,1,1))=(51002304600\dotsb, (0,1,0,0,1,1)).
  \end{displaymath}
\end{example}
The following is a special case of Pieri's rule:
\begin{corollary}
  For every positive integer $k$,
  \begin{displaymath}
    s_{(k)} = h_k, \text{ and } s_{(1^k)} = e_k.
  \end{displaymath}
\end{corollary}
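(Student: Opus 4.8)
The plan is to obtain both identities as the base case of Pieri's rule, specializing the indexing partition $\lambda$ to the empty partition $\emptyset$. The first thing I would establish is that $s_\emptyset = 1$: setting $\lambda = (0,\dotsc,0)$ in Cauchy's bialternant formula (\ref{eq:schur}) gives $\lambda + \delta = \delta$, so that $a_{\lambda+\delta} = a_\delta$ and hence $s_\emptyset = a_\delta/a_\delta = 1$. Since the empty partition is a legitimate partition (of $0$), Pieri's rule applies to it.

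Applying the first form of Pieri's rule with $\lambda = \emptyset$ then gives
\begin{displaymath}
  h_k = s_\emptyset h_k = \sum_\mu s_\mu,
\end{displaymath}
the sum being over all partitions $\mu$ of size $k$ for which the skew shape $\mu/\emptyset$---that is, the Young diagram of $\mu$ itself---is a horizontal strip. The combinatorial heart of the argument is to identify these $\mu$: a Young diagram is a horizontal strip exactly when it has at most one box in each column, and this forces all boxes into a single row, so $\mu = (k)$ is the only surviving term. This yields $s_{(k)} = h_k$. Dually, the second form of Pieri's rule with $\lambda = \emptyset$ expresses $e_k$ as a sum of $s_\mu$ over partitions $\mu$ of size $k$ whose diagram is a vertical strip; such a diagram has at most one box per row, hence is a single column, and $\mu = (1^k)$ is the only term, giving $s_{(1^k)} = e_k$.

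I do not expect any genuine obstacle in this deduction---both identities fall out immediately once Pieri's rule is in place. The only points that merit attention are the verification of the base case $s_\emptyset = 1$, and the precise characterization of horizontal and vertical strips built on the empty diagram; the latter is where one must be careful to rule out multi-row (respectively multi-column) shapes rather than simply asserting the answer.
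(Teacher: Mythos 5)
Your proposal is correct and is exactly the argument the paper intends: the corollary is stated there as ``a special case of Pieri's rule,'' obtained by taking $\lambda=\emptyset$ and observing that the only partition of $k$ that is a horizontal (resp.\ vertical) strip is $(k)$ (resp.\ $(1^k)$). Your extra care with the base case $s_\emptyset=1$ and the strip characterization is sound and fills in details the paper leaves implicit.
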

\begin{exercise}
  \label{exercise:hook-schur}
  Use Pieri's rule to show that:
  \begin{displaymath}
    h_ke_l = s_{(k, 1^l)} + s_{(k+1, 1^{l-1})}.
  \end{displaymath}
  Conclude that
  \begin{displaymath}
    s_{(j+1,1^k)} = \sum_{l=0}^k (-1)^l h_{j+l+1}e_{k-l}.
  \end{displaymath}
\end{exercise}
\subsection{Schur to Complete and Elementary via Tableaux}
Pieri's rule allows us to compute the complete and elementary symmetric polynomials $h_\lambda$ and $e_\lambda$ in terms of Schur polynomials.
\begin{example}
  \label{example:expansion-e}
  Repeated application of Pieri's rule gives an expansion of $e_{(2, 2, 1)} = e_2e_2e_1$ as:
  \begin{displaymath}
    \resizebox{\textwidth}{!}{
      \xymatrix@R-15pt{
        &&&e_2 e_2 e_1\ar[d]&&&\\
        &&&s_{(1^2)} e_2 e_1\ar[dll]\ar[d]\ar[drr]&&\\
        &s_{(2^2)}e_1\ar[dl]\ar[d] && s_{(2, 1^2)}e_1\ar[dl]\ar[d]\ar[dr] && s_{(1^4)}e_1 \ar[d]\ar[dr]&\\
        s_{(3, 2)} & s_{(2^2, 1)} & s_{(3, 1^2)} & s_{(2^2, 1)} & s_{(2, 1^3)} & s_{(2, 1^3)} & s_{(1^5)}
      }
    }
  \end{displaymath}
  giving:
  \begin{displaymath}
    e_{(2^2,1)} = s_{(3,2)} + 2s_{(2^2,1)} + s_{(3,1^2)} + 2s_{(2,1^3)} + s_{(1^5)}.
  \end{displaymath}
  The steps going from the first line of the above calculation to each term of the last line can be recorded by putting numbers into Young diagrams:
  \begin{displaymath}
    \resizebox{0.75\textwidth}{!}{
      \xymatrix@R-15pt{
        &&&\emptyset\ar[d]&&&\\
        &&&\ytableaushort{1,1}\ar[dll]\ar[d]\ar[drr]&&\\
        &\ytableaushort{12,12}\ar[dl]\ar[d] && \ytableaushort{12,1,2}\ar[dl]\ar[d]\ar[dr] && \ytableaushort{1,1,2,2}\ar[d]\ar[dr]&\\
        \ytableaushort{123,12} & \ytableaushort{12,12,3} & \ytableaushort{123,1,2} & \ytableaushort{12,13,2} & \ytableaushort{12,1,2,3} & \ytableaushort{13,1,2,2} & \ytableaushort{1,1,2,2,3}
      }
    }
  \end{displaymath}
  The boxes in the vertical strip added at the $i$th stage are filled with $i$.
\end{example}
\begin{example}
  \label{example:expansion-h}
  Repeated application of Pieri's rule gives an expansion of $h_{(2, 2, 1)} = h_2h_2h_1$ as:
  \begin{displaymath}
    \resizebox{\textwidth}{!}{
      \xymatrix@R-15pt{
        &&&h_2 h_2 h_1\ar[d]&&&\\
        &&&s_{(2)} h_2 h_1\ar[dll]\ar[d]\ar[drr]&&\\
        &s_{(4)}h_1\ar[dl]\ar[d] && s_{(3,1)}h_1\ar[dl]\ar[d]\ar[dr] && s_{(3,2)}h_1 \ar[d]\ar[dr]&\\
        s_{(5)} & s_{(4, 1)} & s_{(4,1)} & s_{(3, 2)} & s_{(3, 1^2)} & s_{(3,2)} & s_{(2,2,1)}
      }
    }
  \end{displaymath}
  giving:
  \begin{displaymath}
    h_{(2^2,1)} = s_{(5)} + 2s_{(4,1)} + 2s_{(3,2)} + s_{(3,1^2)} + s_{(2,2,1)}.
  \end{displaymath}
  The steps going from the first line of the above calculation to each term of the last line can be recorded by putting numbers into Young diagrams:
  \ytableausetup{smalltableaux}
  \begin{displaymath}
    \resizebox{\textwidth}{!}{
      \xymatrix@R-15pt{
        &&&\emptyset\ar[d]&&&\\
        &&&\ytableaushort{11}\ar[dll]\ar[d]\ar[drr]&&\\
        &\ytableaushort{1122}\ar[dl]\ar[d] && \ytableaushort{112,2}\ar[dl]\ar[d]\ar[dr] && \ytableaushort{11,22}\ar[d]\ar[dr]&\\
        \ytableaushort{11223} & \ytableaushort{1122,3} & \ytableaushort{1123,2} & \ytableaushort{112,23} & \ytableaushort{112,2,3} & \ytableaushort{113,22} & \ytableaushort{11,22,3}
      }
    }
  \end{displaymath}
  The boxes in the horizontal strip added at the $i$th stage are filled with $i$.
\end{example}
\begin{definition}
  [Semistandard tableau]
  A semistandard tableau of shape $\lambda=(\lambda_1,\dotsc,\lambda_l)$ and type $\mu=(\mu_1,\dotsc,\mu_m)$ is the Young diagram of $\lambda$ filled with numbers $1,\dotsc, m$ such that the number $i$ appears $\mu_i$ times, the numbers weakly increase along rows, and strictly increase along columns.
\end{definition}
\begin{exercise}
  Semistandard tableaux of shape $\lambda$ and type $\mu$ correspond to chains of integer partitions
  \begin{displaymath}
    \emptyset = \lambda^{(0)} \subset \lambda^{(1)}\subset \lambda^{(2)} \subset \dotsb \subset \lambda^{(m)} = \lambda
  \end{displaymath}
  where $\lambda^{(i)}/\lambda^{(i-1)}$ is a horizontal strip of size $\mu_i$.
\end{exercise}
\begin{example}
  The semistandard tableau of type $(3,2)$ and type $(2,2,1)$ are $\ytableaushort{112,23}$ and $\ytableaushort{113,22}$.
  They correspond to the chains:
  \begin{displaymath}
    \ydiagram{2}\subset \ydiagram{3,1}\subset \ydiagram{3,2} \text{ and } \ydiagram{2}\subset \ydiagram{2,2}\subset \ydiagram{3,2},
  \end{displaymath}
  respectively.
  As illustrated in Example~\ref{example:expansion-h}, the coefficient of $s_{(3,2)}$ in the complete symmetric polynomial $h_{(2,2,1)}$ is the number of semistandard tableau of shape $(3,2)$ and type $(2,2,1)$.
\end{example}
\begin{definition}
  [Kostka number]
  Given two partitions $\lambda$ and $\mu$, the Kostka number $K_{\lambda\mu}$ is the number of semistandard tableaux of shape $\lambda$ and type $\mu$.
\end{definition}
\begin{exercise}
  \label{exercise:unit-kostka}
  For every partition $\lambda$, show that $K_{\lambda\lambda}=1$.
\end{exercise}
\begin{definition}
  [$f$-number]
  The $f$-number of a partition $\lambda$ of $n$ is defined to be the Kostka number $K_{\lambda,(1^n)}$, and is denoted $f_\lambda$.
\end{definition}
\begin{exercise}
  For a partition $\lambda$, let $\lambda^-$ denote the set of all partitions whose Young diagram can be obtained by removing one box from the Young diagram of $\lambda$.
  For each $\lambda\neq \emptyset$, show that $f_\lambda = \sum_{\mu\in \lambda^-} f_\mu$.
\end{exercise}
\begin{exercise}
  A hook is a partition of the form $h(a,b)=(a+1,1^b)$.
  Show that $f_{h(a,b)}=\binom{a+b}a$.
\end{exercise}
In order to understand the expansion of elementary symmetric polynomials we would need a variant of semistandard tableaux, one where the difference between successive shapes are vertical strips, rather than horizontal strips.
However, it has become common practice to \emph{conjugate} partitions instead:
\begin{definition}
  [Conjugate of a partition]
  The \emph{conjugate} of a partition $\lambda$ is the partition $\lambda'$ whose Young diagram is given by:
  \begin{displaymath}
    \lambda' = \{(j,i)\mid (i, j)\in \lambda\}.
  \end{displaymath}
  In other words, the Young diagram of $\lambda'$ is the reflection of the Young diagram of $\lambda$ about its principal diagonal.
\end{definition}
Clearly $\lambda\mapsto\lambda'$ is an involution.
For example, if $\lambda=(2,2,1)$, then $\lambda'=(3,2)$.
\begin{exercise}
Semistandard tableaux of shape $\lambda'$ and type $\mu$ correspond to chains of integer partitions
  \begin{displaymath}
    \emptyset = \lambda^{(0)} \subset \lambda^{(1)}\subset \lambda^{(2)} \subset \dotsb \subset \lambda^{(m)} = \lambda
  \end{displaymath}
  where $\lambda^{(i)}/\lambda^{(i-1)}$ is a \emph{vertical} strip of size $\mu_i$.  
\end{exercise}
The method for computing elementary and complete polynomials from Schur polynomials illustrated in Examples~\ref{example:expansion-e} and \ref{example:expansion-h} can be expressed as follows:
\begin{theorem}
  \label{theorem:schur-to-eh}
  The expansion of complete symmetric polynomials in terms of Schur polynomials is given by:
  \begin{displaymath}
    h_\mu = \sum_\lambda K_{\lambda\mu}s_\lambda.
  \end{displaymath}
  Dually, the extension of elementary symmetric polynomials in terms of Schur polynomials is given by:
  \begin{displaymath}
    e_\mu = \sum_\lambda K_{\lambda'\mu}s_\lambda.
  \end{displaymath}
\end{theorem}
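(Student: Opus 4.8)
The plan is to prove the complete-symmetric identity by iterating Pieri's rule, and then to deduce the elementary one by the same argument with vertical strips in place of horizontal strips. Write $\mu=(\mu_1,\dotsc,\mu_m)$ and recall from the Corollary that $h_{\mu_1}=s_{(\mu_1)}$, so that $h_\mu = s_{(\mu_1)}h_{\mu_2}\dotsb h_{\mu_m}$. I would proceed by induction on $m$, at each stage multiplying the current Schur expansion by the next factor $h_{\mu_i}$ and re-expanding via Pieri's rule.

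First I would set up the bookkeeping on the partial products $h_{\mu_1}\dotsb h_{\mu_i}$. The base case $i=1$ is $h_{\mu_1}=s_{(\mu_1)}$, a single horizontal strip of size $\mu_1$ added to the empty partition. For the inductive step, suppose $h_{\mu_1}\dotsb h_{\mu_{i-1}} = \sum_\nu c_\nu s_\nu$. Multiplying by $h_{\mu_i}$ and applying the first part of Pieri's rule to each summand $s_\nu h_{\mu_i}$ replaces $s_\nu$ by $\sum_{\kappa} s_\kappa$, the sum running over all $\kappa\supset\nu$ with $\kappa/\nu$ a horizontal strip of size $\mu_i$.

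Next I would identify the resulting coefficient combinatorially. Unwinding the induction, the coefficient of $s_\lambda$ in $h_\mu$ is the number of chains
\begin{displaymath}
  \emptyset = \lambda^{(0)} \subset \lambda^{(1)} \subset \dotsb \subset \lambda^{(m)} = \lambda
\end{displaymath}
in which each skew-shape $\lambda^{(i)}/\lambda^{(i-1)}$ is a horizontal strip of size $\mu_i$; each term surviving to $s_\lambda$ corresponds to exactly one such chain. By the exercise expressing semistandard tableaux of shape $\lambda$ and type $\mu$ as precisely such chains, this count equals $K_{\lambda\mu}$, giving $h_\mu=\sum_\lambda K_{\lambda\mu}s_\lambda$.

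Finally, for the dual statement I would run the identical argument starting from $e_{\mu_1}=s_{(1^{\mu_1})}$ and using the second (vertical-strip) part of Pieri's rule, so that the coefficient of $s_\lambda$ in $e_\mu$ counts chains with each $\lambda^{(i)}/\lambda^{(i-1)}$ a vertical strip of size $\mu_i$. By the exercise identifying such chains with semistandard tableaux of shape $\lambda'$ and type $\mu$, this count is $K_{\lambda'\mu}$. I do not expect a genuine obstacle here beyond careful bookkeeping; the one point deserving attention is that the coefficient after $m$ applications of Pieri's rule really does count chains with the correct multiplicity. This holds because Pieri's rule adds each admissible strip with coefficient exactly $1$, so no cancellation occurs and distinct chains contribute distinct, independently counted terms to $s_\lambda$.
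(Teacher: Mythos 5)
Your proposal is correct and is essentially the paper's own argument: the paper presents this theorem as the formalization of the iterated Pieri-rule computations in Examples~\ref{example:expansion-e} and~\ref{example:expansion-h}, with the coefficient identified as $K_{\lambda\mu}$ (resp.\ $K_{\lambda'\mu}$) via the exercises encoding semistandard tableaux as chains of partitions differing by horizontal (resp.\ vertical) strips. Your explicit induction on the number of parts of $\mu$ and the remark that Pieri's rule contributes each admissible strip with coefficient $1$ just make the bookkeeping precise.
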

\subsection{Triangularity of Kostka Numbers}
\label{sec:triang-kostka-numb}
If partitions $\lambda=(\lambda_1,\dotsc,\lambda_l)$ and $\mu=(\mu_1,\dotsc,\mu_m)$ have $K_{\lambda\mu}>0$,
then there exists a semistandard tableau $t$ of shape $\lambda$ and type $\mu$.
Since the columns of $t$ are strictly increasing, all the $1$'s in $t$ must occur in its first row, so $\lambda_1\geq \mu_1$.
Also, all the $2$'s must occur in the first two rows (along with all the $1$'s), so $\lambda_1+\lambda_2\geq \mu_1+\mu_2$.
More generally, all the numbers $1,\dotsc, i$ for $i=1,\dotsc,m$ should occur in the first $i$ rows of $t$.
We have:
\begin{equation}
  \label{eq:dominance}
  \lambda_1+\dotsb + \lambda_i \geq \mu_1+\dotsb+\mu_i \text{ for } i=1,\dotsc, m.
\end{equation}
\begin{definition}
  The integer partition $\lambda$ \emph{dominates} the integer partition $\mu$ if $|\lambda|=|\mu|$ and (\ref{eq:dominance}) holds for $i=1,\dotsc,m$.
  When this happens write $\lambda\rhd \mu$.
  This relation defines a partial order on the set of all integer partitions of $n$ for any non-negative integer $n$.
\end{definition}
\begin{exercise}
  Show that $(n)$ is maximal and $(1^n)$ is minimal among all the integer partitions of $n$.
  What is the smallest integer $n$ for which the dominance order on partitions of $n$ is not a linear order?
\end{exercise}
\begin{theorem}
  [Triangularity of Kostka Numbers]
  \label{theorem:Kostka-triangularity}
  Given partitions $\lambda$ and $\mu$ of an integer $n$, $K_{\lambda\mu}>0$ if and only if $\lambda\rhd\mu$.
\end{theorem}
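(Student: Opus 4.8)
The plan is to handle the two implications separately. The forward direction, that $K_{\lambda\mu}>0$ forces $\lambda\rhd\mu$, is already carried out in the paragraph preceding the theorem: a semistandard tableau of shape $\lambda$ and type $\mu$ confines the entries $1,\dots,i$ to the first $i$ rows, which is exactly the system of inequalities~(\ref{eq:dominance}), and $|\lambda|=|\mu|=n$ by hypothesis. So the real content is the converse, $\lambda\rhd\mu\Rightarrow K_{\lambda\mu}>0$, which I would prove by induction on the number of parts $m$ of $\mu$. Throughout I would use the earlier characterization of semistandard tableaux of shape $\lambda$ and type $\mu$ as chains $\emptyset=\lambda^{(0)}\subset\dots\subset\lambda^{(m)}=\lambda$ in which $\lambda^{(i)}/\lambda^{(i-1)}$ is a horizontal strip of size $\mu_i$; producing one such chain is the same as certifying $K_{\lambda\mu}>0$.

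First I would record a consequence of the hypothesis: since $\lambda_1+\dots+\lambda_m\ge\mu_1+\dots+\mu_m=n=|\lambda|$ and all parts are non-negative, $\lambda$ has at most $m$ parts, so I may write $\lambda=(\lambda_1,\dots,\lambda_m)$ allowing trailing zeros. The inductive step then rests on a single lemma: if $\lambda\rhd\mu$, there is a partition $\nu$ with $\lambda/\nu$ a horizontal strip of size $\mu_m$ and $\nu\rhd\bar\mu$, where $\bar\mu=(\mu_1,\dots,\mu_{m-1})$. Granting this, the inductive hypothesis supplies a semistandard tableau of shape $\nu$ and type $\bar\mu$, and filling the boxes of $\lambda/\nu$ with $m$ yields one of shape $\lambda$ and type $\mu$; the base case $m=1$ forces $\lambda=(n)$, and $K_{(n),(n)}=1$.

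To construct $\nu$ I would peel off the $\mu_m$ boxes ``from the bottom.'' Since $\lambda/\nu$ is a horizontal strip precisely when the parts interlace, $\lambda_{i+1}\le\nu_i\le\lambda_i$, row $i$ can surrender up to $\lambda_i-\lambda_{i+1}$ boxes. Proceeding from row $m$ upward and deleting boxes greedily until the budget $\mu_m$ is spent produces a genuine partition $\nu$ (it is weakly decreasing because $\nu_i\ge\lambda_{i+1}\ge\nu_{i+1}$); the budget can always be met because the total capacity telescopes to $\lambda_1\ge\mu_1\ge\mu_m$. Writing $p$ for the highest row actually touched, the rows below $p$ are emptied to $\nu_i=\lambda_{i+1}$ and the rows above are untouched, which yields the clean identity
\[
  \nu_1+\dots+\nu_j=\lambda_1+\dots+\lambda_{j+1}-\mu_m\qquad(p\le j\le m-1).
\]

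The crux, and the step I expect to require the most care, is deducing $\nu\rhd\bar\mu$ from this identity. For $j<p$ the partial sums of $\nu$ coincide with those of $\lambda$ and hence already dominate those of $\mu$. For $p\le j\le m-1$ the identity turns the required inequality $\nu_1+\dots+\nu_j\ge\mu_1+\dots+\mu_j$ into $\lambda_1+\dots+\lambda_{j+1}\ge\mu_1+\dots+\mu_j+\mu_m$, which follows by applying $\lambda\rhd\mu$ at index $j+1$ and using $\mu_{j+1}\ge\mu_m$ (valid since $\mu$ is weakly decreasing and $j+1\le m$). This proves the lemma and completes the induction.
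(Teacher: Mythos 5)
Your proposal is correct and follows essentially the same route as the paper: the forward implication is delegated to the preceding discussion, and the converse is proved by peeling off a horizontal strip of size $\mu_m$ and recursing on $(\mu_1,\dotsc,\mu_{m-1})$. Your greedy bottom-up peeling produces exactly the paper's partition $\eta$ (your $p$ is the paper's largest $i$ with $\lambda_i\geq\mu_m$, and your $\nu$ coincides with $\eta$), and the verification of $\nu\rhd\bar\mu$ via $\mu_{j+1}\geq\mu_m$ is the same computation.
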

\begin{proof}
  It only remains to construct, whenever $\lambda\rhd\mu$, a semistandard tableau of shape $\lambda$ and type $\mu$.
  In order to understand the algorithm that follows, it is helpful to keep in mind Example~\ref{example:can-tab} below.
  Since $\lambda\rhd\mu$, $\lambda_1\geq \mu_1\geq \mu_m$.
  Therefore, the Young diagram of $\lambda$ has at least $\mu_m$ cells in its first row, or in other words, it has at least $\mu_m$ columns.
  Choose the largest integer $i$ for which $\lambda_i\geq\mu_m$.
  Fill the bottom-most box in the $\lambda_{i+1}$ leftmost columns with $m$.
  Also, from the $i$th row, fill the rightmost $\mu_m-\lambda_{i+1}$ boxes with $m$.
  The remaining (unfilled) boxes in the Young diagram of $\lambda$ now form the Young diagram of the partition
  \begin{displaymath}
  \eta=(\lambda_1,\dotsc,\lambda_{i-1}, \lambda_i - \mu_m + \lambda_{i+1}, \lambda_{i+2},\dotsc,\lambda_l), 
  \end{displaymath}
  a partition with $l-1$ parts.
  Writing $(\eta_1,\dotsc,\eta_{l-1})$ for the parts of $\eta$, note that, since the first $i-1$ parts of $\eta$ are the same as those of $\lambda$,
  we have:
  \begin{displaymath}
    \eta_1+\dotsb + \eta_j\geq \mu_1 + \dotsb + \mu_j
  \end{displaymath}
  for $j\leq i-1$.
  For $j\geq i$, we have
  \begin{align*}
    \eta_1 + \dotsb + \eta_j & = \lambda_1 + \dotsb + \lambda_{j+1} -\mu_m\\
                             & \geq \mu_1 + \dotsb + \mu_j + \mu_{j+1} - \mu_m\\
                             & \geq \mu_1 + \dotsb + \mu_j.
  \end{align*}
  It follows that $\eta\rhd (\mu_1,\dotsc,\mu_{m-1})$.
  Recursively applying this step to $\eta$ and $(\mu_1,\dotsc,\mu_{m-1})$ gives rise to a semistandard tableau of shape $\lambda$ and type $\mu$.
  The base case is where $\mu$ has only one part, in which case the dominance condition (\ref{eq:dominance}) implies that $\lambda=\mu$.
\end{proof}
\begin{example}
  \label{example:can-tab}
  Consider the case where $\lambda=(7,3,2)$ and $\mu=(4,4,4)$.
  Then the largest integer $i$ such that $\lambda_i\geq 4$ is $1$.
  Accordingly, we enter $3$ into the bottom-most boxes in the three leftmost columns, and also into one rightmost box in the first row:
  \begin{displaymath}
    \ytableaushort{{}{}{}{}{}{}3,{}{}3,33}
  \end{displaymath}
  We are left with the problem of finding a semistandard tableau of shape $(6,2)$ and type $(4,4)$.
  Recursively applying our process to this smaller problem gives:
  \begin{displaymath}
    \ytableaushort{{}{}{}{}223,223,33},
  \end{displaymath}
  and finally the desired tableau
  \begin{displaymath}
    \ytableaushort{1111223,223,33}.
  \end{displaymath}
\end{example}
\begin{theorem}
  The complete symmetric polynomials:
  \begin{displaymath}
    \{h_\mu \mid \text{$\mu$ is a partition of $d$ with at most $n$ parts}\}
  \end{displaymath}
  and the elementary symmetric polynomials:
  \begin{displaymath}
    \{e_\mu\mid \text{$\mu$ is a partition of $d$ with $\mu_1\leq n$}\}
  \end{displaymath}
  form bases of the space of homogeneous symmetric polynomials of degree $d$ in variables $x_1,\dotsc,x_n$.
\end{theorem}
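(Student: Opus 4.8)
The plan is to deduce both statements from the triangularity of Kostka numbers (Theorem~\ref{theorem:Kostka-triangularity}) together with the expansions in Theorem~\ref{theorem:schur-to-eh}, using the already-established fact that the Schur polynomials $s_\lambda$ with at most $n$ parts form a basis. Throughout I write $\ell(\lambda)$ for the number of parts of $\lambda$ and recall that $s_\lambda(x_1,\dotsc,x_n)=0$ whenever $\ell(\lambda)>n$. The dimension of the target space equals the number of partitions of $d$ with at most $n$ parts. The set indexing the $h_\mu$ has exactly this cardinality, while the set indexing the $e_\mu$, namely $\{\mu:\mu_1\leq n\}$, has the same cardinality because conjugation $\mu\mapsto\mu'$ is a bijection carrying $\{\mu:\mu_1\leq n\}$ onto $\{\mu:\ell(\mu)\leq n\}$. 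So in each case it suffices to exhibit an invertible (indeed unitriangular) transition matrix to the Schur basis.

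For the complete symmetric polynomials, I would order the partitions of $d$ by any linear extension of the dominance order. By Theorem~\ref{theorem:schur-to-eh}, $h_\mu=\sum_\lambda K_{\lambda\mu}s_\lambda$. First I would observe that every $\lambda$ with $K_{\lambda\mu}>0$ satisfies $\ell(\lambda)\leq\ell(\mu)\leq n$: such $\lambda$ must satisfy $\lambda\rhd\mu$ by Theorem~\ref{theorem:Kostka-triangularity}, and taking $i=\ell(\mu)$ in the dominance inequalities gives $\lambda_1+\dotsb+\lambda_{\ell(\mu)}\geq\mu_1+\dotsb+\mu_{\ell(\mu)}=|\lambda|$, forcing $\lambda$ to have no parts beyond the $\ell(\mu)$-th. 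Hence the expansion of $h_\mu$ stays inside the Schur basis $\{s_\lambda:\ell(\lambda)\leq n\}$, and the transition matrix $(K_{\lambda\mu})$, indexed by partitions of $d$ with at most $n$ parts, is triangular with respect to dominance with diagonal entries $K_{\mu\mu}=1$ (Exercise~\ref{exercise:unit-kostka}). A unitriangular matrix is invertible, so the $h_\mu$ form a basis.

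For the elementary symmetric polynomials I would mirror this using the dual expansion $e_\mu=\sum_\lambda K_{\lambda'\mu}s_\lambda$ and the reindexing $\kappa=\lambda'$, which rewrites it as $e_\mu=\sum_\kappa K_{\kappa\mu}s_{\kappa'}$. In $n$ variables the term indexed by $\kappa$ survives only when $\ell(\kappa')=\kappa_1\leq n$, so $e_\mu=\sum_{\kappa_1\leq n}K_{\kappa\mu}s_{\kappa'}$; as $\kappa$ ranges over $\{\kappa_1\leq n\}$ the polynomials $s_{\kappa'}$ range over the full Schur basis. Writing $J$ for the set of partitions of $d$ with first part at most $n$, the transition matrix from $\{e_\mu:\mu\in J\}$ to $\{s_{\kappa'}:\kappa\in J\}$ is the principal submatrix $(K_{\kappa\mu})_{\kappa,\mu\in J}$ of the Kostka matrix; being a principal submatrix of a unitriangular matrix it is itself unitriangular along the induced order, hence invertible, and the $e_\mu$ form a basis.

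The main obstacle is the elementary case. Unlike the complete case, dominance does \emph{not} keep the summation index inside $J$ (one can have $K_{\kappa\mu}>0$ with $\kappa_1>n$), so I must explicitly discard the vanishing Schur terms $s_{\kappa'}$ with $\kappa_1>n$ and then verify that passing to the principal submatrix indexed by $J$ preserves unitriangularity, and therefore invertibility. The remaining checks—the cardinality match via conjugation and the diagonal values $K_{\mu\mu}=1$—are routine.
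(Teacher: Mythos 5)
Your proposal is correct and follows essentially the same route as the paper, which deduces the result from Theorem~\ref{theorem:schur-to-eh}, the triangularity of Kostka numbers (Theorem~\ref{theorem:Kostka-triangularity}), and $K_{\lambda\lambda}=1$ (Exercise~\ref{exercise:unit-kostka}); you have simply supplied the details the paper leaves implicit, in particular the observation that $\lambda\rhd\mu$ forces $\ell(\lambda)\leq\ell(\mu)$ in the $h$-case and the careful discarding of the vanishing terms $s_{\kappa'}$ with $\kappa_1>n$ in the $e$-case. These checks are exactly the right ones, and no gap remains.
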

\begin{proof}
  In view of the triangularity of Kostka numbers (Theorem~\ref{theorem:Kostka-triangularity}) and the fact that $K_{\lambda\lambda}=1$ (Exercise~\ref{exercise:unit-kostka}) the theorem follows from Theorem~\ref{theorem:schur-to-eh}.
\end{proof}
\subsection{Schensted's insertion algorithm}
\label{sec:schenst-insert-algor}
Let $t$ be a semistandard tableau, and $x$ be a positive integer.
Schensted's insertion algorithm is a method of inserting a box with the number $x$ into $t$, resulting in a new tableau $\ins(t, x)$.
Applied repeatedly, it gives a way to convert any word into a tableau.
This tableau succinctly expresses some combinatorial properties of the original word.

First consider the case where $t$ has a single row, with entries $a_1\leq \dotsb \leq a_k$.
Use $\emptyset$ to denote the empty word.
The algorithm $\rins$ takes as input the single row $t$ and a letter $x$, and returns a pair $(b, t')$, where $b'$ is either the empty word, or a single letter, and $t'$ is a row:
\begin{displaymath}
  \rins(a_1a_2\dotsb a_k, x) =
  \begin{cases}
    (\emptyset, a_1\dotsb a_k x) & \text{if $x\geq a_i$ for all $i$},\\
    (a_j, a_1\dotsb a_{j-1} x a_{j+1} \dotsb a_k) & \text{if } j = \min\{r \mid a_r > x\}.
  \end{cases}
\end{displaymath}
In the second case, one says that \emph{$x$ has been inserted into $t=a_1\dotsb a_k$, obtaining $t'=a_1\dotsb a_{j-1} x a_{j+1} \dotsb a_k$, and \textbf{bumping out} $a_j$}.
Also, it is notationally convenient to write $\rins(t, \emptyset) = (\emptyset, t)$ (when nothing is inserted, $t$ remains unchanged, and nothing is bumped out).

Now suppose $t$ is a tableau, with first row $r$.
Suppose that $\rins(r,x) = (y, r')$.
Recursively define $\ins(t,x)$ to be the tableau whose first row is $r'$, and remaining rows are the rows of $\ins(t',y)$,
where $t'$ is the tableau consisting of all but the first row of $t$.
\begin{example}
  \label{example:insertion}
  Consider the insertion of $3$ into the tableau:
  \begin{displaymath}
    t = \ytableaushort{13358,2466,358,4}.
  \end{displaymath}
  We have $\rins(13358,3) = (5,13338)$; $\rins(2466,5)=(6,2456)$; $\rins(358,6)=(8,356)$; $\rins(4,8)=(\emptyset,48)$.
  Thus, $\ins(t, 3)$ is the tableau:
  \begin{displaymath}
    \ytableaushort{13338,2456,356,48}.
  \end{displaymath}
In general, it is not possible to recover $t$ and $x$ from $\ins(t,x)$, even if we know $x$.
For example, the above tableau can be obtained by inserting $3$ into a different tableau:
\begin{displaymath}
  \ins\left(\vcenter{\hbox{\ytableaushort{13368,245,356,48}}}, 3\right) = \vcenter{\hbox{\ytableaushort{13338,2456,356,48}}}.
\end{displaymath}
\end{example}
Clearly, the shape of $\ins(t,x)$ is obtained by adding one box to the shape of $t$.
If we know the row $r$ into which the new box was added, and the value of $x$, then $t$ can be recovered from $\ins(t,x)$.
This recovery is based on the fact that $\rins$ can be inverted:
define
\begin{displaymath}
  \rdel(a, a_1a_2\dotsb a_k) = (a_1\dotsb a_{j-1}aa_{j+1}\dotsb a_k, a_j),
\end{displaymath}
where $j=k$ if $a_i\leq a$ for all $i=1,\dotsc, k$ and $j = \min\{i\mid a_{i+1}>a\}$.
To recover $t$ and $x$ from $s=\ins(t,x)$ and $r$ (the number of the row into which the new box was added), delete the last entry of the $r$th row of $s$, say $x_r$.
Let $u_{r-1}$ denote the $(r-1)$st row of $s$.
Suppose $\rdel(x_r,u_{r-1}) = (v_{r-1},x_{r-1})$, replace the $(r-1)$st row of $s$ with $v_{r-1}$.
Continue this process until $\rdel(x_2,u_1)=(v_1,x_1)$ is obtained and the first row of $s$ is replaced with $v_1$.
The tableau obtained at the end of this process is $t$, and $x=x_1$.
Write $\del(t,r)=(s,x)$.
The preceding discussion shows:
\begin{theorem}
  \label{theorem:del-ins}
  If $\del(t,r)=(s,x)$, then $\ins(s,x)=t$, and $\shape(t)$ is obtained from $\shape(s)$ by adding a cell to its $r$th row.
\end{theorem}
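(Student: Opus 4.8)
The statement packages two assertions: that $\ins(s,x)=t$, and that $\shape(t)$ exceeds $\shape(s)$ by a single cell in row $r$. The plan is to extract the single-row content common to both algorithms, prove that $\rins$ and $\rdel$ are mutually inverse on one row, and then lift this to tableaux by induction on the number of rows. The shape assertion is the easy half: by definition $\del(t,r)$ first deletes the last box of the $r$th row of $t$ and thereafter only \emph{rearranges} entries within rows $1,\dotsc,r-1$, since each application of $\rdel$ returns a row of the same length as its input. Thus $s$ and $t$ agree in row length everywhere except in row $r$, where $s$ is shorter by one, and $\shape(t)$ is indeed $\shape(s)$ with a cell adjoined at the end of row $r$.

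For the row lemma I would show that for a single (weakly increasing) row $\rho$ and a letter $a$,
\begin{displaymath}
  \rins(\rho,a)=(b,\rho')\ \text{with $b$ a letter}\qquad\Longleftrightarrow\qquad \rdel(b,\rho')=(\rho,a).
\end{displaymath}
Writing $\rho=a_1\dotsb a_k$ and $j=\min\{i\mid a_i>a\}$, forward insertion puts $a$ at position $j$ and ejects $b=a_j$, so $\rho'=a_1\dotsb a_{j-1}\,a\,a_{j+1}\dotsb a_k$. I would then evaluate $\rdel(a_j,\rho')$ straight from the definition, checking that the index it selects is again $j$, so that $a_j$ returns to position $j$, the letter $a$ is ejected, and $\rho$ is restored; the converse is the same computation read backwards. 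The remaining, terminal case $\rins(\rho,a)=(\emptyset,\rho a)$ corresponds to the creation of a new box and is what halts the recursion.

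The main obstacle is precisely this row-level verification when $\rho$ has \emph{repeated} entries. Insertion ejects the leftmost entry strictly greater than $a$, whereas $\rdel$ must relocate the entry it replaces through an inequality calibrated to be strict on the \emph{other} side; a careless treatment of $<$ versus $\leq$ lets the index chosen by $\rdel$ slip past a run of entries equal to $a_j$ lying to the right of position $j$, and then the two maps fail to invert. What must be checked is that, because $a<a_j\leq a_{j+1}\leq\dotsb$, the letter $a$ now sitting at position $j$ of $\rho'$ is exactly the entry that $\rdel$ picks out, no matter how long the block of entries equal to $a_j$ is. This is the one spot where the strict/weak conventions built into the two definitions are genuinely load-bearing.

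Finally I would run the induction, dropping the first row at each step. Suppose $\del(t,r)=(s,x)$, produced by the reverse-bumping chain that begins by deleting the last entry $x_r$ of row $r$ of $t$ and propagates upward through $\rdel(x_r,r_{r-1})=(v_{r-1},x_{r-1}),\dotsc,\rdel(x_2,r_1)=(v_1,x_1)$ with $x=x_1$, where $r_i$ and $v_i$ denote the $i$th rows of $t$ and $s$. Feeding $(s,x_1)$ into $\ins$ first computes $\rins(v_1,x_1)$, which by the converse half of the row lemma equals $(x_2,r_1)$: the first row of $t$ is recovered and $x_2$ is bumped downward. The remaining rows of $s$ together with $x_2$ are exactly the deletion data $\del(t',r-1)=(s',x_2)$ for the tableau $t'$ obtained by discarding the first row, so the inductive hypothesis gives $\ins(s',x_2)=t'$, i.e.\ the lower rows of $t$ are recovered as well. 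The base case is a single row, where deletion removes the final (largest) entry and insertion appends it back by the terminal case above. Hence $\ins(s,x)=t$, which together with the shape computation completes the proof.
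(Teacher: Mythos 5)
Your decomposition --- a single-row inversion lemma for $\rins$ and $\rdel$, lifted to tableaux by induction on the row index, together with the observation that $\rdel$ preserves row lengths so that only row $r$ shrinks --- is exactly the argument the paper intends; its ``proof'' consists of nothing more than the construction of $\rdel$ and the recovery procedure, so your job was to supply the verification. The shape half and the inductive scaffolding are fine.

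The gap sits at the one step you yourself single out as load-bearing and then defer: the claim that $\rdel(a_j,\rho')$ selects index $j$ again. For the definition of $\rdel$ as printed --- $j=\min\{i\mid a_{i+1}>a\}$, with $j=k$ when every $a_i\le a$ --- this is false, and in fact the theorem itself fails for that definition. Take $\rho=22$ and $a=1$: then $\rins(22,1)=(2,12)$, but $\rdel(2,12)$ falls into the case ``all entries $\le 2$'', selects $j=k=2$, and returns $(12,2)$ rather than $(22,1)$. Concretely, for the tableau $t$ with rows $12$ and $2$ one gets $\del(t,2)=(s,x)$ with $s$ the single row $12$ and $x=2$, whereas $\ins(s,2)$ is the single row $122\ne t$. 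The inequality must be weak: $\rdel(a,\cdot)$ has to target the \emph{rightmost entry strictly less than} $a$, i.e.\ $j=\min\{i\mid a_{i+1}\ge a\}$, with $j=k$ only when all $a_i<a$. With that correction your verification does go through --- after insertion one has $b_{j-1}\le x<a_j\le a_{j+1}=b_{j+1}$, so $j$ is the largest index with $b_j<a_j$ and $\rdel$ cannot slip past a block of entries equal to $a_j$ --- and then the rest of your outline (base case, propagation up the rows, shape bookkeeping) is complete and correct. But as written, the crux of the row lemma is asserted rather than computed, and the computation fails for the stated $\rdel$; actually carrying out the check you promise is what would have forced the correction.
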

\begin{exercise}
  Verify Theorem~\ref{theorem:del-ins} for the insertions in Example~\ref{example:insertion}.
\end{exercise}
\subsection{Tableaux and Words}
\label{sec:tabl-assoc-word}
Let $L^*_n$ denote the concatenation monoid of all words in the alphabet $L_n=\{1,\dotsc,n\}$.
For any $w=a_1\dotsb a_k\in L^*_n$, Schensted's insertion algorithm allows us to associate a unique semistandard tableau $P(w)$ as follows:
\begin{itemize}
\item If $w=a$ has only one letter, then $P(a)$ is the single-cell tableau with entry $a$.
\item If $w=ua$, where $u\in L_n^*$ and $a\in \{1,\dotsc,n\}$, then \linebreak $P(w)=\ins(P(u), a)$.
\end{itemize}
\begin{example}
  \label{example:insertion}
  If $w=1374433254$, then $P(w)$ is the tableau:
  \begin{displaymath}
    \ytableaushort{12334,345,4,7}
  \end{displaymath}
\end{example}
\label{sec:tableau-as-words}
Given a semistandard tableau $t$, its reading word $w$ is defined to be the sequence of numbers obtained from reading its rows from left to right, starting with the bottom row, and moving up sequentially to the top row.
Since the first entry of each row is strictly smaller than the last entry of the row below it, the tableau $t$ can be recovered from $w$ by chopping it up into segments with a cut after each $a_i$ with $a_{i+1}<a_i$ (we say that $w$ has a descent at $i$). The resulting segments, taken from right to left, form the rows of $t$.
\begin{example}
  The reading word of the tableau $t$ formed at the end of Example~\ref{example:insertion} is:
  \begin{displaymath}
    w = 7434512334.
  \end{displaymath}
  The tableau $t$ is recovered by marking off the descents $w = 7|4|345|12334$, and then rearranging the segments into a tableau.
\end{example}
\begin{exercise}
  \label{exercise:tableau-word}
  Let $w$ denote the reading word of a tableau $t$.
  Show that $P(w)=t$.
\end{exercise}
Not every word comes from a semistandard tableau; for example the word $132$, when broken up at descents gives rise to $\ytableaushort{2,13}$.
We shall say that a word is a tableau if it is the reading word of a semistandard tableau.

Call the word $w=a_1\dotsb a_k$ a \emph{row} if $a_1\leq \dotsb \leq a_k$.
Call it a \emph{column} if $a_1>\dotsb>a_k$.
Write $x^w$ for the monomial $x_{a_1}x_{a_2}\dotsb x_{a_k}$.
\begin{exercise}
  \label{exercise:row-col}
  Show that, for every positive integer $i$,
  \begin{displaymath}
    h_k(x_1,\dotsc, x_n) = \sum_{\text{$w\in L_n^*$ is a row of length $k$}}x^w,
  \end{displaymath}
  and 
  \begin{displaymath}
    e_k(x_1,\dotsc, x_n) = \sum_{\text{$w\in L_n^*$ is a column of length $k$}}x^w.
  \end{displaymath}
\end{exercise}
If $w_1$ and $w_2$ are words, and $w_1w_2$ is their concatenation, then
\begin{displaymath}
  x^{w_1}x^{w_2} = x^{w_1w_2}.
\end{displaymath}
This gives rise to an algebra homomorphism called the \emph{evaluation map}:
\begin{displaymath}
\ev:\mathbf Z[L_n^*]\to \mathbf Z[x_1,\dotsc, x_n]
\end{displaymath}
from the monoid algebra of $L_n^*$ onto the ring of polynomials in $n$ variables.
In the algebra $\mathbf Z[L_n^*]$, define elements
\begin{align*}
  \mathbf H_k & = \sum_{\text{$w\in L_n^*$ is a row of length $k$}} w\\
  \mathbf E_k & = \sum_{\text{$w\in L_n*$ is a column of length $k$}} w
\end{align*}
for every positive integer $k$.
Then Exercise~\ref{exercise:row-col} can be restated as the identities:
\begin{displaymath}
  e_k = \ev(\mathbf E_k) \text{ and } h_k = \ev(\mathbf H_k).
\end{displaymath}
The evaluation map has a large kernel; is domain is the free algebra, and it maps onto the polynomial algebra.
Its image contains our primary object of interest---the algebra of symmetric polynomials in $n$ variables.
In the next few sections, we shall learn about an equivalence relation ``$\equiv$'' on $L_n^*$, called \emph{Knuth equivalence}, such that the resulting quotient monoid $\pl(L_n):=L_n^*/\equiv$ (called the \emph{plactic monoid}) has the property that the subalgebra of $\mathbf Z[\pl(L_n)]$ generated by the elements $\{\mathbf E_k\}_{k=1}^\infty$ or the elements $\{\mathbf H_k\}_{k=1}^\infty$ is isomorphic to the subalgebra of symmetric polynomials in $\mathbf Z[x_1,\dotsc, x_n]$ under the evaluation map.
\subsection{The Plactic Monoid}
\label{sec:plactic-monoid}
The plactic monoid $\pl(L_n)$ is the quotient of $L^*_n$ by the equivalence relation generated by the Knuth relations:
\begin{gather}
  \tag{$K1$}\label{eq:k1}
  xzy \equiv zxy \text{ if } x\leq y < z,
  \\
  \tag{$K2$}\label{eq:k2}
  yxz \equiv yzx \text{ if } x < y \leq z.
\end{gather}
Two words are said to be in the same plactic class if each can be obtained from the other by a sequence of moves of the form (\ref{eq:k1}) and (\ref{eq:k2}).
Since both sides of the Knuth relations have the same evaluation, it follows that the evaluation map $\ev:\mathbf Z[L^*_n]\to \mathbf Z[x_1,\dotsc,x_n]$ factors through the plactic monoid algebra $\mathbf Z[\pl(L_n)]$.
Let $E_k$ denote the image of $\mathbf E_k$ and $H_k$ denote the image of $\mathbf H_k$ in $\mathbf Z[\pl(L_n)]$.
\begin{exercise}
  Take $n=2$.
  Show that $E_1$ and $E_2$ commute in $\mathbf Z[\pl(L_2)]$.
  Show that they commute in $\mathbf Z[\pl(L_3)]$.
\end{exercise}
\begin{exercise}
  \label{exercise:forgotten}
  Define Sch\"utzenberger's forgotten relations by:
  \begin{gather}
    \tag{$F1$}\label{eq:f1}
    xzy \cong yxz \text{ if } x < y < z,
    \\
    \tag{$F2$}\label{eq:f2}
    zxy \cong yzx \text{ if } x \leq y \leq z.
  \end{gather}
  Let $F(L_n)$ denote the monoid $L_n^*/\cong$.
  Show that the images of $\mathbf E_1$ and $\mathbf E_2$ commute in $F(L_3)$.
\end{exercise}
\begin{exercise}
  Show that any evaluation-preserving equivalence on $L_3^*$ under which the images of $\mathbf E_1$ and $\mathbf E_2$ commute must include either the Knuth equivalences or Sch\"utzenberger's forgotten equivalences.
\end{exercise}
\begin{exercise}
  \begin{enumerate}
  \item Show that, if $\iota(r,x)=(y,r')$ (as in Section~\ref{sec:schenst-insert-algor}), then $rx\equiv yr'$.
  \item Show that, if $tx\equiv \ins(t,x)$. Here tableaux are to be identified with their reading words.
  \item Show that, for every $w\in L_n^*$, $w\equiv P(w)$.
  \end{enumerate}
\end{exercise}
\subsection{The plactic Pieri rules}
Observe that, if for any tableau $t$ and $x\in L_n$, if $t'=\ins(t,x)$, then $\shape(t')$ is obtained by adding one box to $\shape(t)$.
\begin{lemma}
  \label{lemma:two-step}
  Let $t$ be the (the reading word of) a semistandard tableau in $L_n^*$ and $x,y$ be letters in $L_n$.
  Let $t'=\ins(t,x)$ and $t''=\ins(t',y)$.
  Let $a$ be the box added to $\shape(t)$ to obtain $\shape(t')$, and $b$ be the box added to $\shape(t')$ to obtain $\shape(t'')$.
  If $x\leq y$, then $b$ lies in a column strictly to the right of the column of $a$.
  If $x>y$, then $b$ lies in a row strictly below the row of $a$.
\end{lemma}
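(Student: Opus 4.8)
The plan is to analyze the two successive insertions by tracking their \emph{bumping paths} and comparing them row by row. Recall that computing $\ins(t,x)$ amounts to applying $\rins$ to the first row, pushing the bumped letter down into the next row, and repeating; the sequence of columns at which a letter is placed or bumped in the successive rows is the bumping path, and it terminates in the row where a new box is appended---this last box is $a$ for the insertion of $x$ and $b$ for the insertion of $y$. I would set $v_0=x$ and let $v_i$ be the letter bumped out of row $i$ during the first insertion (so $v_{i-1}$ is the letter that enters row $i$), with $c_i$ the column at which it acts; similarly $w_0=y$, with $w_{i-1}$ the letter entering row $i$ of $t'=\ins(t,x)$ during the second insertion and $d_i$ the column at which it acts. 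Thus $a$ sits at position $(r_a,c_a)$, where $r_a$ is the row in which the first path appends, and $b$ at $(r_b,d_{r_b})$.

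The core is a single-row comparison. Fix a row $R$ of $t$; inserting $v_{i-1}$ into it produces the corresponding row $R'$ of $t'$, which differs from $R$ in exactly the position $c_i$ (now holding $v_{i-1}$), and then $w_{i-1}$ is inserted into $R'$. I would first verify, directly from the definition of $\rins$, the monotonicity that drives everything: if $v_{i-1}\le w_{i-1}$ then $d_i>c_i$ and the bumped letters again satisfy $v_i\le w_i$; dually, if $v_{i-1}>w_{i-1}$ then $d_i\le c_i$, the second insertion still bumps (rather than appends) whenever the first one does, and $v_i>w_i$. Each direction is a short case check using that $R'$ is weakly increasing and that position $c_i$ of $R'$ holds $v_{i-1}$. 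Since $v_0\le w_0$ when $x\le y$ and $v_0>w_0$ when $x>y$, the relevant inequality propagates down every row by induction, so the second path stays strictly right of the first (when $x\le y$) or weakly left of it (when $x>y$) at every row through which both pass.

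It remains to compare the terminal boxes, and this is where I expect the main obstacle. For $x>y$ the argument closes cleanly: at the append-row $r_a$ of the first path the entering letter $w_{r_a-1}<v_{r_a-1}$ meets a row of $t'$ whose last entry is $v_{r_a-1}$, so it is forced to bump rather than append; hence the second path continues below row $r_a$ and $b$ lands in a row strictly below the row of $a$. The delicate case is $x\le y$, where the second path can \emph{append early}, terminating in some row $i<r_a$, and one must still rule out that $b$ lands in the same column as $a$. I would resolve this using the fact that $t'$ is a genuine semistandard tableau: since the first path appends a box at $(r_a,c_a)$ with $c_a=|R_{r_a}|+1$ and $t'$ has partition shape, row $r_a$ must be strictly shorter than the row above it, whence $|R_i|+1>|R_{r_a}|+1=c_a$ for every $i<r_a$. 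Because an early append places $b$ at column $|R_i|+1$, this yields exactly the strict statement that $b$ lies to the right of $a$; and if instead the second path reaches row $r_a$, the propagated inequality $w_{r_a-1}\ge v_{r_a-1}$ forces $w_{r_a-1}$ to append one box further right, again strictly right of $a$. Assembling the two cases gives the lemma.
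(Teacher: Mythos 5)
Your proof is correct and follows essentially the same route as the paper's: both compare the two insertions row by row, showing that the order relation between the entering letters (and hence between the columns at which they act and the letters they bump) propagates downward by induction, and both settle the terminal boxes by a shape argument (the early-append case via the partition shape of $t'$, and the $x>y$ case by noting that the second letter is forced to keep bumping past the row where the first path appended). Your bumping-path formulation merely makes explicit the single-row monotonicity invariant that the paper carries implicitly through its case analysis on the last entry of the first row.
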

\begin{proof}
  Let $a_1\dotsb a_k$ be the first row of $t$.

  Suppose $x\leq y$.
  Consider first the case where $a_k\leq y$.
  If $a_k\leq x\leq y$, then the result is obvious.
  If $x<a_k$ and $y\geq a_k$, then $b$ lies in the $(k+1)$st column, whereas $x$ bumps some letter $x'\leq y$ to a lower row.
  This letter cannot come to rest in the $(k+1)$st column because that would violate the fact that columns increase strictly in a semistandard tableau.

  Now consider the case where $a_k>y$.
  Then $x$ bumps out $x'$ and $y$ bumps out $y'$ with $x'\leq y'$.
  The problem is now reduced to the tableau obtained by removing the top row of $t$, allowing for the application of induction.
  In the base case (where the original tableau $t$ is a row), $a$ and $b$ are the first and second boxes in the second row of $t''$.

  Now suppose $x>y$.
  If $x\geq a_k$, then $x$ first comes to rest at the end of the first row in $t'$, but then $y$ bumps some element of the first row of $t'$ up to a lower row in $t''$.
  So $a$ lies in the first row and $b$ in a lower row.
  If $x\leq a_k$, then the elements $x'$ and $y'$ bumped out from the first row by $x$ and $y$ respectively again satisfy $x'>y'$, allowing for an inductive argument.
\end{proof}
\begin{theorem}
  [Plactic Pieri rules]
  \label{theorem:plactic-pieri}
  Let $\Tab_n(\lambda)$ denote the set of all semistandard tableaux of shape $\lambda$ and entries in $L_n$.
  Let $R_k(L_n)$ denote the set of rows of length $k$ in $L_n^*$.
  Then the map $(t,r)\mapsto P(tr)$ defines a bijection:
  \begin{displaymath}
    \Tab_n(\lambda)\times R_k(L_n) \to \coprod_{\mu/\lambda \text{ is a horizontal strip of size $k$}} \Tab_n(\mu).
  \end{displaymath}
  Let $C_k(L_n)$ denote the set of columns of length $k$ in $L_n^*$.
  Then the map $(t,c)\mapsto P(tc)$ defines a bijection:
  \begin{displaymath}
    \Tab_n(\lambda)\times C_k(L_n) \to \coprod_{\mu/\lambda \text{ is a vertical strip of size $k$}} \Tab_n(\mu).
  \end{displaymath}
\end{theorem}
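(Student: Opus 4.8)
\emph{Proof proposal.} The plan is to prove the horizontal-strip statement in full and to observe that the vertical-strip statement is its exact dual. First I would unwind the definition: for a row $r=y_1\dotsb y_k$ with $y_1\le\dotsb\le y_k$, the tableau $P(tr)$ is obtained by inserting $y_1,\dotsc,y_k$ into $t$ one letter at a time, so its shape is built from $\shape(t)=\lambda$ by adjoining one box $a_i$ at each step. Applying Lemma~\ref{lemma:two-step} to each consecutive pair of insertions (taking $x=y_i\le y_{i+1}=y$), the box $a_{i+1}$ lies strictly to the right of the column of $a_i$. Hence $a_1,\dotsc,a_k$ occupy $k$ distinct columns, so $\mu:=\shape(P(tr))$ satisfies that $\mu/\lambda$ is a horizontal strip of size $k$, and the map indeed lands in the asserted codomain.

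Next I would construct the inverse by reverse bumping, using the invertibility of a single insertion supplied by Theorem~\ref{theorem:del-ins}. Given $u\in\Tab_n(\mu)$ with $\mu/\lambda$ a horizontal strip, let $c_1<\dotsb<c_k$ be the distinct columns occupied by its cells. A short diagram-chase shows that the cell in the rightmost column $c_k$ is a removable corner of $\mu$: if $\mu$ had a box immediately to its right or immediately below it, that box could not be in the strip (by maximality of $c_k$ together with the horizontal-strip condition), so it would lie in $\lambda$, forcing the corner cell itself into $\lambda$, a contradiction. Letting $i$ be its row, $\del(u,i)$ strips off this corner, returning a letter $y_k$ and a tableau $u'$ for which $\shape(u')/\lambda$ is the same strip minus its rightmost cell. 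Iterating produces letters $y_k,\dotsc,y_1$ and a tableau $t$ of shape $\lambda$, and I would set the inverse image to be $(t,\,y_1\dotsb y_k)$.

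Finally I would check that the two maps are mutually inverse and that $y_1\dotsb y_k$ is a genuine row. By Theorem~\ref{theorem:del-ins}, $\ins$ and $\del$ undo each other step by step once the affected row is known, so it suffices to verify that the right-to-left (by column) deletion order used above is exactly the reverse of the left-to-right insertion order; this is precisely what Lemma~\ref{lemma:two-step} guarantees, since it identifies the last box inserted with the one sitting in the rightmost new column. The same lemma forces $y_1\le\dotsb\le y_k$: were some $y_i>y_{i+1}$, the forward insertion would add $a_{i+1}$ strictly below $a_i$ rather than to its right, contradicting that the deleted cells were read off in strictly decreasing column order. The main obstacle, and where Lemma~\ref{lemma:two-step} does all the real work, is exactly this matching of orders---translating the geometric fact \emph{insertion of a weakly increasing row advances strictly through the columns} into the combinatorial fact \emph{reverse bumping through the strip's columns recovers a weakly increasing row}---which is what pins down the inverse uniquely. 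The vertical-strip case is handled identically, reading a column $c=y_1\dotsb y_k$ with $y_1>\dotsb>y_k$, scanning the strip's cells from bottom to top, and invoking the $x>y$ branch of Lemma~\ref{lemma:two-step}.
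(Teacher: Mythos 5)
Your proposal is correct and follows exactly the paper's own strategy: Lemma~\ref{lemma:two-step} shows the image shape differs from $\lambda$ by a horizontal (resp.\ vertical) strip, and the inverse is built by repeated reverse bumping with $\del$, removing the strip's cells in the order dictated by that lemma. You have simply supplied the details (removability of the rightmost strip cell, recovery of a weakly increasing word) that the paper leaves implicit.
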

\begin{proof}
  Lemma~\ref{lemma:two-step} implies that $\shape(P(wr))$ is obtained from\linebreak $\shape(P(w))$ by adding a horizontal strip, and that $\shape(P(wc))$ is obtained from $\shape(P(w))$ by adding a vertical strip.
  The bijectivity can be shown by repeated used of the $\del$ algorithm (because we know which box from $\mu$ has to be removed at each step).
\end{proof}
\begin{exercise}
  \label{exercise:plactic-pieri}
  Define an element of $\mathbf Z[L_n^*]$ by $\mathbf S_\lambda=\sum_{t\in \Tab_n(\lambda)} t$.
  Let $S_\lambda$ denote the image of $\mathbf S_\lambda$ in $\mathbf Z[\pl(L_n)]$.
  Use Theorem~\ref{theorem:plactic-pieri} to show that:
  \begin{align*}
    S_\lambda H_k&=\sum_{\mu/\lambda \text{ is a horizontal strip of size $k$}} S_\mu,\\
    S_\lambda E_k&=\sum_{\mu/\lambda \text{ is a vertical strip of size $k$}} S_\mu,\\
  \end{align*}
\end{exercise}
\begin{exercise}
  Show that $E_k$ and $E_l$ (also $H_k$ and $H_l$) commute in $\mathbf Z[\pl(L_n)]$ for all positive integers $n$, $k$ and $l$.
\end{exercise}
\begin{corollary}
  [Kostka's definition of Schur polynomials]
  \label{corollary:kostka-def-schur}
  For every partition $\lambda$ and every positive integer $n$, we have:
  \begin{displaymath}
    s_\lambda(x_1,\dotsc,x_n) = \sum_{t\in\Tab_n(\lambda)} x^t.
  \end{displaymath}
\end{corollary}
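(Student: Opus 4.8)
The plan is to show that the right-hand side, which I will abbreviate $\tilde s_\lambda := \sum_{t\in\Tab_n(\lambda)} x^t$, satisfies exactly the same expansion in terms of complete symmetric polynomials as $s_\lambda$ does, and then to invert the (unitriangular) Kostka matrix to conclude $\tilde s_\lambda = s_\lambda$. The starting observation is that $\tilde s_\lambda = \ev(S_\lambda)$: since $\ev(t)=x^t$ for a word $t$ and $\ev$ factors through $\mathbf Z[\pl(L_n)]$, applying $\ev$ to $\mathbf S_\lambda = \sum_{t\in\Tab_n(\lambda)} t$ gives precisely $\sum_{t\in\Tab_n(\lambda)} x^t$. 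Likewise $\ev(H_k)=h_k$ by Exercise~\ref{exercise:row-col}.

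First I would iterate the plactic Pieri rule. Starting from $S_\emptyset = 1$ and applying the identity $S_\nu H_k = \sum_{\kappa/\nu \text{ is a horiz.\ strip of size $k$}} S_\kappa$ from Exercise~\ref{exercise:plactic-pieri} once for each part of $\mu=(\mu_1,\dotsc,\mu_m)$, I obtain in $\mathbf Z[\pl(L_n)]$ the identity
\[
  H_{\mu_1}\dotsb H_{\mu_m} = \sum_\lambda K_{\lambda\mu} S_\lambda,
\]
because the chains $\emptyset = \lambda^{(0)}\subset\dotsb\subset\lambda^{(m)}=\lambda$ with each $\lambda^{(i)}/\lambda^{(i-1)}$ a horizontal strip of size $\mu_i$ are counted by $K_{\lambda\mu}$ (the correspondence between such chains and semistandard tableaux). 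Terms with more than $n$ parts drop out automatically, since then $\Tab_n(\lambda)=\emptyset$ and $S_\lambda=0$. Applying $\ev$ yields $h_\mu = \sum_\lambda K_{\lambda\mu}\tilde s_\lambda$, the sum over partitions $\lambda$ of $d=|\mu|$ with at most $n$ parts.

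Next I would compare this with Theorem~\ref{theorem:schur-to-eh}, which gives $h_\mu = \sum_\lambda K_{\lambda\mu}s_\lambda$ (again the terms with more than $n$ parts vanish because $s_\lambda(x_1,\dotsc,x_n)=0$). Subtracting, for every partition $\mu$ of $d$ with at most $n$ parts,
\[
  \sum_\lambda K_{\lambda\mu}\bigl(s_\lambda - \tilde s_\lambda\bigr) = 0,
\]
the sum over partitions $\lambda$ of $d$ with at most $n$ parts. By the triangularity of Kostka numbers (Theorem~\ref{theorem:Kostka-triangularity}), $K_{\lambda\mu}=0$ unless $\lambda\rhd\mu$, while $K_{\mu\mu}=1$ (Exercise~\ref{exercise:unit-kostka}); hence, ordering partitions compatibly with the dominance order, the square matrix $(K_{\lambda\mu})$ is unitriangular and therefore invertible over $\mathbf Z$. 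Consequently $s_\lambda - \tilde s_\lambda = 0$ for every such $\lambda$. For partitions with more than $n$ parts both sides are $0$, so the identity holds in all cases.

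The main obstacle is the bookkeeping that collapses the iterated Pieri steps into the single coefficient $K_{\lambda\mu}$ and the verification that the plactic expansion and the ordinary one are indexed identically (same range of $\lambda$, same coefficients); once this is in place, the invertibility of the Kostka matrix is the only remaining ingredient, and it is already available. One could equally phrase the final deduction as a downward induction on the dominance order, solving $s_\mu = h_\mu - \sum_{\lambda\rhd\mu,\,\lambda\neq\mu} K_{\lambda\mu}s_\lambda$ and the analogous relation for $\tilde s_\mu$, with base case $\mu=(d)$, where $h_{(d)}=s_{(d)}=\tilde s_{(d)}$.
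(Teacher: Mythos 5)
Your proposal is correct and follows essentially the same route as the paper: both arguments identify the right-hand side with $\ev(\mathbf S_\lambda)$, use the plactic Pieri rule (Exercise~\ref{exercise:plactic-pieri}) to show these polynomials satisfy the expansion $h_\mu=\sum_\lambda K_{\lambda\mu}\,\ev(\mathbf S_\lambda)$ of Theorem~\ref{theorem:schur-to-eh}, and then invoke the unitriangularity of the Kostka matrix (Theorem~\ref{theorem:Kostka-triangularity} together with Exercise~\ref{exercise:unit-kostka}) to conclude $\ev(\mathbf S_\lambda)=s_\lambda$. Your write-up merely makes explicit the matrix inversion and the bookkeeping over partitions with at most $n$ parts that the paper leaves implicit.
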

\begin{proof}
  Note that $\ev(\mathbf S_\lambda) = \sum_{t\in \Tab_n(\lambda)} x^t$.
  Then $\ev(\mathbf S_{(n)}) = h_n$, and $\ev(\mathbf S_{(1^n)}) = e_n$, just like the Schur polynomials.
  Moreover, since the evaluation map $\mathbf Z[L_n^*]$ factors through $\mathbf Z[\pl(L_n^*)]$, Exercise~\ref{exercise:plactic-pieri} implies that the polynomials $\ev(\mathbf S_\lambda)$ satisfy the Pieri rule, just like the Schur polynomials.
  This suffices for the identities of Theorem~\ref{theorem:schur-to-eh} to hold with $s_\lambda$ replaced by $\ev(\mathbf S_\lambda)$.
  By the triangularity properties of Kostka numbers (Section~\ref{sec:triang-kostka-numb}), these identities uniquely determine the Schur polynomials, therefore $S_\lambda=s_\lambda$ for every partition $\lambda$.
\end{proof}
\subsection{The Lindstr\"om-Gessel-Viennot Lemma}
\label{sec:lgv}
Let $R$ be a commutative ring.
Let $S$ be any set of points, and $v:S\times S\to R$ be any function (we think of $w$ as a \emph{weight function}).
Given $s, t\in S$, a path in $S$ from $s$ to $t$ is is a sequence $\omega=(s=s_0,s_1,\dotsc,s_k=t)$ of distinct points in $S$.
We denote this by $\omega:s\to t$.
The weight of the path $\omega$ is defined to be:
\begin{displaymath}
  v(\omega) = v(s_0,s_1)v(s_1,s_2)\dotsb v(s_{k-1}, s_k).
\end{displaymath}
\begin{definition}
  [Crossing paths]
  Two paths $\omega=(s_0,\dotsc, s_k)$ and $\eta=(t_0,\dotsc,t_l)$ are said to cross if $s_i= t_j$ for some $0\leq i \leq k$ and $0\leq j \leq l$.
\end{definition}
\begin{definition}
  [Crossing condition]
  \label{definition:crossing-condition}
  Given a set $S$ of points, a weight function $v:S\times S\to R$, and a points $A_1,\dotsc,A_n$, and $B_1,\dotsc, B_n$, we say that the \emph{crossing condition} is satisfied if, whenever $1\leq i<j\leq n$ and $1\leq i'<j'\leq n$, and $\omega:i \to j'$ and $\eta: j\to i'$ are paths such that $v(\omega)\neq 0$ and $v(\eta)\neq 0$, then the paths $\omega$ and $\eta$ cross.
\end{definition}
Fix points $A_1,\dotsc, A_n$ and $B_1,\dotsc, B_n$ in $S$, and define an $n\times n$ matrix $(a_{ij})$ by:
\begin{displaymath}
  a_{ij} = \sum_{\omega:A_i\to B_j} v(\omega).
\end{displaymath}
\begin{theorem}
  [Lindstr\"om-Gessel-Viennot Lemma]
  \label{lemma:lgv}
  Assume that the crossing condition (Definition~\ref{definition:crossing-condition}) holds.
  Then the determinant of the matrix $(a_{ij})$ defined above is given by:
  \begin{equation}
    \label{eq:lgv}
    \det(a_{ij}) = \sum_{\omega_i:A_i\to B_i} v(\omega_1)\dotsb v(\omega_n),
  \end{equation}
  where the sum is over all $n$-tuples $(\omega_1,\dotsc, \omega_n)$ of pairwise non-crossing paths $\omega_i:A_i\to B_i$.
\end{theorem}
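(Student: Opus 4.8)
The plan is to expand $\det(a_{ij})$ by the Leibniz formula and then cancel all of the ``crossing'' contributions against each other by a sign-reversing, weight-preserving involution, so that only the non-crossing systems with $\omega_i:A_i\to B_i$ survive. First I would substitute the definition of $a_{ij}$ into the permutation expansion of the determinant:
\[
  \det(a_{ij}) = \sum_{\sigma\in S_n}\epsilon(\sigma)\prod_{i=1}^n a_{i\sigma(i)} = \sum_{\sigma\in S_n}\epsilon(\sigma)\sum_{(\omega_1,\dots,\omega_n)} v(\omega_1)\dotsb v(\omega_n),
\]
where the inner sum runs over all tuples of paths with $\omega_i:A_i\to B_{\sigma(i)}$. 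Thus $\det(a_{ij})$ becomes a signed sum over all pairs consisting of a permutation $\sigma$ together with a path system realizing it, each weighted by $\epsilon(\sigma)\prod_i v(\omega_i)$. Terms of weight zero contribute nothing, so throughout I may restrict to systems of nonzero weight.

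Next I would build an involution $\Phi$ on the set of those (nonzero-weight) systems in which at least two paths \emph{cross}, i.e.\ share a common point. Given such a system, I would select a canonical crossing: let $i$ be the least index for which $\omega_i$ meets another path, let $P$ be the first point along $\omega_i$ (starting from $A_i$) lying on another path, and let $j$ be the least index $\neq i$ with $P\in\omega_j$. I would then form $\omega_i'$ by following $\omega_i$ up to $P$ and thereafter the tail of $\omega_j$ after $P$, and $\omega_j'$ symmetrically, leaving all other paths fixed. This swap exchanges the targets $B_{\sigma(i)}$ and $B_{\sigma(j)}$, so the realized permutation changes by the transposition $(i\,j)$ and its sign reverses; since $v$ is multiplicative along edges and the multiset of edges (including the four edges meeting $P$) is unchanged, $\prod_i v(\omega_i)$ is preserved. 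Hence every crossing term cancels against its image under $\Phi$.

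Finally I would handle the survivors, namely the systems in which no two paths cross. For such a system of nonzero weight realizing $\sigma$, the crossing condition (Definition~\ref{definition:crossing-condition}) forces $\sigma=\mathrm{id}$: if $\sigma$ had an inversion $i<j$ with $\sigma(i)>\sigma(j)$, then setting $j'=\sigma(i)$ and $i'=\sigma(j)$ gives $i'<j'$, and $\omega_i:A_i\to B_{j'}$, $\omega_j:A_j\to B_{i'}$ would be exactly a pair of the type required to cross, contradicting non-crossing. Thus the surviving terms are precisely the non-crossing systems with $\omega_i:A_i\to B_i$, each carrying sign $\epsilon(\mathrm{id})=+1$, which is the right-hand side of (\ref{eq:lgv}).

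The main obstacle will be verifying that $\Phi$ is genuinely well defined and squares to the identity. The delicate point is that the tail-swapped sequences $\omega_i'$ and $\omega_j'$ must again be \emph{paths}, i.e.\ have distinct points: the defining minimality of $P$ guarantees that the initial segment of $\omega_i$ up to $P$ avoids $\omega_j$, so $\omega_i'$ is simple, and I must check the symmetric statement for $\omega_j'$ together with the fact that re-applying the same canonical selection to $\Phi(\omega_1,\dots,\omega_n)$ recovers the triple $(i,P,j)$, so that $\Phi^2=\mathrm{id}$. It is precisely the choice of $P$ as the \emph{first} common point along the path of \emph{least} index that makes these checks go through.
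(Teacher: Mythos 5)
Your proposal is correct and takes essentially the same route as the paper's own proof: both expand $\det(a_{ij})$ over permutations and path systems, cancel the crossing systems by a sign-reversing, weight-preserving tail-swap at a canonically chosen first crossing, and use the crossing condition to show the surviving non-crossing systems realize the identity permutation. The only (immaterial) difference is the order of the canonical selection---you take the first common point along $\omega_i$ and then the least index of a path through it, whereas the paper first takes the least index $j$ of a path crossing $\omega_i$ and then the first common point with $\omega_j$---and you are in fact more explicit than the paper (which only asserts ``it is not hard to see that $I$ is an involution'') about the need to check that the swapped sequences remain simple paths and that the canonical selection is stable under the swap.
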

\begin{proof}
  Let $P$ be the set of all $n$-tuples of paths of the form:
  \begin{equation}
    \label{eq:paths}
    \bar\omega = (\omega_i:A_i\to B_{w(i)}, i=1,\dotsc, n),
  \end{equation}
  where $w$ is a permutation of $\{1,\dotsc, n\}$.
  Define the weight of $\bar \omega\in P$ by:
  \begin{displaymath}
    v(\bar\omega) = \prod_{i=1}^n v(\omega_i)
  \end{displaymath}
  and its sign by $\epsilon(\bar\omega)=\epsilon(w)$.
  Then the determinant on the left hand side of (\ref{eq:lgv}) expands to the sum:
  \begin{equation}
    \label{eq:det-expansion}
    \sum_{\bar\omega\in P} \epsilon(\bar\omega)v(\bar\omega).
  \end{equation}
  The cancelling involution $I:P\to P$ is defined by \emph{swapping the first crossing of the first path that crosses another path}: given $\bar\omega$ as in (\ref{eq:paths}), if the paths are pairwise non-crossing, then $\bar\omega$ is a fixed point for $I$.
  In this case the crossing condition implies that $w$ is the identity permutation.
  Otherwise, take the least $i$ such that the path $\omega_i=(s_0,\dotsc,s_k)$ crosses another path, and then the least $j$ such that $\omega_j=(t_0,\dotsc,t_l)$ crosses $\omega_i$.
  Let $m$ be the smallest number such that a point $s_m$ of $\omega_i$ lies in the path $\omega_j$, say $s_m=t_r$.
  Let $I(\bar\omega)$ be the family of paths obtained from $\bar\omega$ by modifying $\omega_i$ and $\omega_j$ to $\omega'_i$ and $\omega'_j$ as follows:
  \begin{align*}
    \omega'_i & = (s_0,\dotsc, s_m, t_{r+1}, \dotsc, t_l),\\
    \omega'_j & = (t_0,\dotsc, t_r, s_{m+1}, \dotsc, s_k).
  \end{align*}
  Clearly, $v(I(\bar\omega)) = v(\bar\omega)$ and $\epsilon(I(\bar\omega)) = - \epsilon(\bar\omega)$.
  It is not hard to see that $I$ is an involution.
  This involution cancels out all the terms in (\ref{eq:det-expansion}) except those that occur on the right hand side of (\ref{eq:lgv}).
\end{proof}
\subsection{The Jacobi-Trudi Identities}
\label{sec:jacobi-trudi-ident}
We have seen that the Kostka numbers can be used to express complete and elementary symmetric polynomials in terms of Schur polynomials.
The reverse operation---that of expressing Schur polynomials in terms of complete or elementary symmetric polynomials---is done by the Jacobi-Trudi identities:
\begin{theorem}
  [Jacobi-Trudi identities]
  For every integer partition $\lambda=(\lambda_1,\dotsc,\lambda_l)$ form the $l\times l$ matrices with $(i,j)$th entry $h_{\lambda_i-i+j}$ and $e_{\lambda'_i-i+j}$ respectively.
  Then
  \begin{displaymath}
    s_\lambda = \det(h_{\lambda_i-i+j})= \det(e_{\lambda'_j+i-j}).
  \end{displaymath}
\end{theorem}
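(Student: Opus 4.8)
Both Jacobi–Trudi identities have the same structure, so I would prove the complete-homogeneous form $s_\lambda = \det(h_{\lambda_i - i + j})$ in detail and then indicate that the elementary form follows by an identical argument with rows replaced by columns (or by invoking conjugation symmetry). The key realization is that the matrix $(h_{\lambda_i - i + j})$ is begging for the Lindström–Gessel–Viennot Lemma: I have a determinant whose $(i,j)$ entry is $h_{\lambda_i - i + j}$, and by Exercise~\ref{exercise:row-col} each $h_k$ is itself a generating function $\sum x^w$ over rows of length $k$, i.e.\ over weakly increasing sequences. So the plan is to interpret $h_{\lambda_i - i + j}$ as a path weight in a suitable lattice, apply the LGV Lemma to rewrite $\det(h_{\lambda_i - i + j})$ as a signless sum over non-crossing path families, and then match non-crossing families bijectively with semistandard tableaux of shape $\lambda$ so that Kostka's definition of Schur polynomials (Corollary~\ref{corollary:kostka-def-schur}) closes the loop.

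**Setting up the lattice and the weights.** I would take $S = \mathbf{Z}^2$ with unit steps East and North, and assign to a step the weight $x_r$ when it moves from a lattice point at height $r$ (equivalently, color each horizontal level $r$ by the variable $x_r$, so that a path's weight records, with multiplicity, the heights at which it takes East steps). The source and sink points are chosen so that a path $A_i \to B_j$ must take exactly $\lambda_i - i + j$ East steps among levels $1, \dotsc, n$; concretely I set $A_i = (i, 0)$ and $B_j = (\lambda_j + n - j + 1,\, n)$, or a similar standard placement, so that the generating function $\sum_{\omega: A_i \to B_j} v(\omega)$ equals exactly $h_{\lambda_i - i + j}(x_1,\dotsc,x_n)$. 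Here each monotone lattice path from $A_i$ to $B_j$ corresponds precisely to a weakly increasing word (a row) of the required length, matching Exercise~\ref{exercise:row-col}. With this dictionary the matrix entry $a_{ij}$ of the LGV setup is literally $h_{\lambda_i - i + j}$, so $\det(h_{\lambda_i - i + j}) = \det(a_{ij})$.

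**Applying LGV and identifying tableaux.** The next step is to verify the crossing condition of Definition~\ref{definition:crossing-condition}: for the staggered sources $A_i$ and sinks $B_j$, any two paths $A_i \to B_{j'}$ and $A_j \to B_{i'}$ with $i<j$ but their endpoints ``out of order'' are forced to intersect, simply because monotone NE lattice paths between nested endpoints in the plane cannot avoid each other. Once that is checked, the LGV Lemma (Theorem~\ref{lemma:lgv}) gives
\begin{displaymath}
  \det(h_{\lambda_i - i + j}) = \sum_{(\omega_1,\dotsc,\omega_n)} v(\omega_1)\dotsb v(\omega_n),
\end{displaymath}
the sum ranging over families of \emph{pairwise non-crossing} monotone paths with $\omega_i : A_i \to B_i$. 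The final and most important step is the bijection between such non-crossing families and semistandard tableaux of shape $\lambda$: each path $\omega_i$ reads off the $i$th row of a filling, the weak monotonicity of each path enforces weak increase along rows, and the non-crossing condition translates exactly into strict increase down columns. Under this bijection the path weight $v(\omega_1)\dotsb v(\omega_n)$ equals $x^t$, so the right-hand side becomes $\sum_{t \in \Tab_n(\lambda)} x^t$, which is $s_\lambda$ by Corollary~\ref{corollary:kostka-def-schur}.

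**The main obstacle.** I expect the routine lattice-path bookkeeping (correct placement of $A_i, B_j$ so the East-step count comes out to $\lambda_i - i + j$, and checking weights) to be mechanical. The genuine crux is the last bijection, specifically the claim that ``non-crossing'' corresponds exactly to ``strictly increasing columns'': I must argue that two adjacent non-crossing paths at consecutive rows force, at every column, the entry below to strictly exceed the entry above, and conversely that any semistandard filling yields non-crossing paths. Getting the off-by-one offsets in the endpoints to make this correspondence clean is the delicate point; once it is set up correctly, strict column-increase is equivalent to the paths never touching. For the elementary version I would run the identical argument using columns (strictly decreasing words) in place of rows, so that the relevant fillings have shape $\lambda'$, and appeal to the second Pieri/Kostka relation $e_\mu = \sum_\lambda K_{\lambda'\mu} s_\lambda$ of Theorem~\ref{theorem:schur-to-eh} to recover $s_\lambda = \det(e_{\lambda'_j + i - j})$.
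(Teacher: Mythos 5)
Your proposal matches the paper's proof in all essentials: both interpret $h_{\lambda_i-i+j}$ as the generating function of monotone lattice paths between staggered sources and sinks (with East steps at level $r$ weighted $x_r$), apply the Lindstr\"om--Gessel--Viennot Lemma (Theorem~\ref{lemma:lgv}), identify non-crossing path families with semistandard tableaux of shape $\lambda$, and conclude via Corollary~\ref{corollary:kostka-def-schur}; the elementary identity is likewise handled by replacing rows with strictly increasing words (the paper uses diagonal steps and enters the resulting sequences into the \emph{columns} of $\lambda$, so the sum is directly $s_\lambda$ and no appeal to Theorem~\ref{theorem:schur-to-eh} is needed). The only differences are bookkeeping choices of endpoints, which you correctly flag as the delicate but routine part.
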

\begin{proof}
  The Jacobi-Trudi identities can be proved using the Lindstr\"om-Gessel-Viennot lemma (Theorem~\ref{lemma:lgv}).
  For the first identity take $S$ to be the positive cone in the the integer lattice:
  \begin{displaymath}
    S = \{(i,j)\mid i\geq 0,\; j>0 \text{ are integers}\}.
  \end{displaymath}
  Set the weight $v((i,j), (i+1,j))$ of each rightward horizontal edge to be $x_j$ for $j=1,\dotsc, n$, the weight of each upward vertical edge $v((i,j), (i,j+1))$ to be $1$ for all $j=1,\dotsc,n-1$.
  The remaining weights are all zero.
  \begin{lemma}
    \label{lemma:entry}
    For all integers $i>0$ and $k\geq 0$, we have:
    \begin{displaymath}
      \sum_{\omega:(i, 1)\to (i+k, n)} v(\omega) = h_k(x_1,\dotsc,x_n).
    \end{displaymath}
  \end{lemma}
  \begin{proof}
    Only rightward or upward steps have non-zero weights.
    So every path with non-zero weight is composed of unit upward and rightward steps.
    A path with non-zero weight from $(i,1)$ to $(i+k, n)$ must have exactly $k$ rightward steps, say in rows $1\leq j_1\leq j_2 \dotsb \leq j_k\leq n$.
    The weight of such a path is $x_{j_1}\dotsb x_{j_k}$, and hence, the sum of the weights of all such paths is $h_k(x_1,\dotsc, x_n)$. For an example, see Fig.~\ref{fig:path_example}.
  \end{proof}
  \begin{figure}
    \centering
    \includegraphics[width=0.7\textwidth]{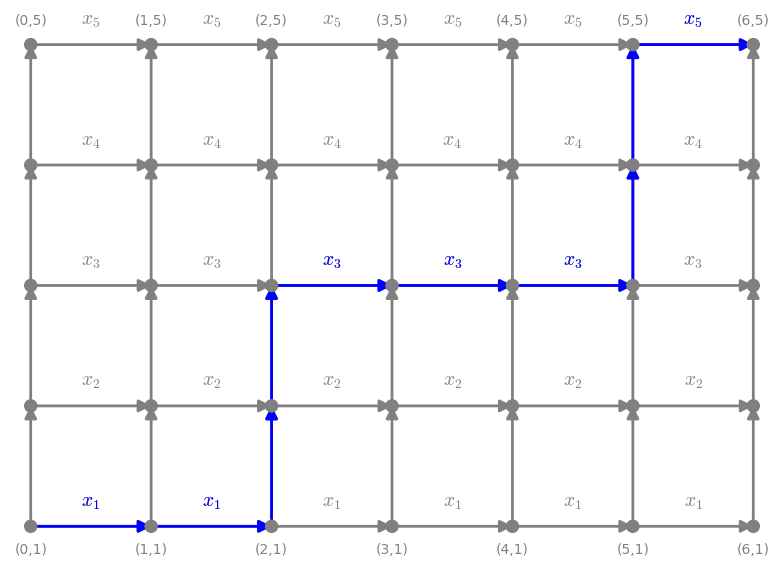}
    \caption{A path from $(0, 1)$ to $(6, 5)$ whose weight is the monomial $x_1^2x_3^3x_5$ in $h_6(x_1,\dotsc,x_5)$.}
    \label{fig:path_example}
  \end{figure}
  Given $\lambda=(\lambda_1,\dotsc,\lambda_l)$, and working with $n$ variables $x_1,\dotsc,x_n$, let $A_i = (l-i, 1)$ and $B_i=(\lambda_i+l-i, n)$ for $i=1,\dotsc, l$.
  Then  by Lemma~\ref{lemma:entry},
  \begin{displaymath}
    \sum_{\omega:A_i\to B_j} v(\omega) = h_{\lambda_j+i-j}.
  \end{displaymath}
  \begin{figure}
    \centering
    \raisebox{-0.5\height}{\includegraphics[width=0.6\textwidth]{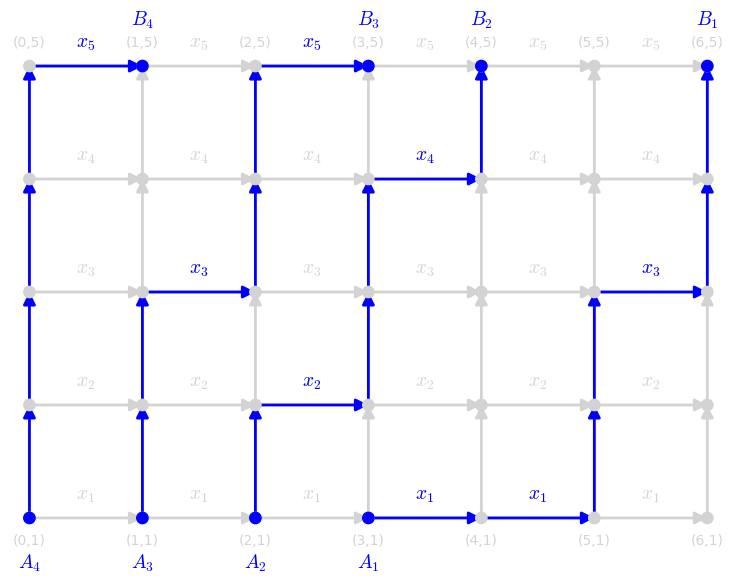}}\hspace{1cm}\raisebox{-0.5\height}{\ytableausetup{nosmalltableaux}$\ytableaushort{113,24,35,5}$}
    \caption{Non-crossing paths and corresponding tableau.}
    \label{fig:jacobi-trudi}
  \end{figure}
  So the left-hand-side of the first Jacobi-Trudi identity is the left-hand-side of the Lindstr\"om-Gessel-Viennot lemma.
  The right hand side of the Lindstr\"om-Gessel-Viennot lemma consists of a sequence of non-crossing paths $(\omega_1,\dotsc,\omega_n)$, where $\omega_i:A_i\to B_i$.
  Reading the row numbers of the horizontal steps in $\omega_i$ gives a weakly increasing sequence of integers $1\leq k_1 \leq \dotsb \leq k_{\lambda_i}\leq n$.
  Enter these numbers into the $i$th row of the Young diagram of $\lambda$ for $i=1,\dotsc,n$.
  Since the paths are non-crossing, the $j$th rightward step of $\omega_i$ must be strictly higher than the $j$th rightward step of $\omega_{i+1}$.
  This means that the columns of the resulting numbering are strictly increasing, resulting in a semistandard tableau of shape $\lambda$ (for an example, see Figure~\ref{fig:jacobi-trudi}).
  Thus, it follows from the Lindstr\"om-Gessel-Viennot lemma that
  \begin{displaymath}
    \det(h_{\lambda_j+i-j}) = \sum_{t\in \tab(\lambda)} x^t.
  \end{displaymath}

  For the second Jacobi-Trudi identity take
  \begin{displaymath}
    S = \{(i,j)\mid i\geq 0,\;j\geq 0\}.
  \end{displaymath}
  Define the weight of each upward vertical edge $v((i,j),(i,j+1))$ to be $1$ (as before) and the weight of a diagonal edge in the upper-right direction $v((i,j-1),(i+1,j))$ to be $x_j$; all other weights are zero.
  For the new weights, the analog of Lemma~\ref{lemma:entry} is:
  \begin{lemma}
    \label{lemma:entry-e}
    For all integers $i>0$ and $k>0$, we have:
    \begin{displaymath}
      \sum_{\omega:(i,0)\to (i+k,n)} v(\omega) = e_k(x_1,\dotsc,x_n).
    \end{displaymath}
  \end{lemma}
  \begin{proof}
    Every path with non-zero weights consists of unit upward or upper-rightward diagonal steps.
    A path with non-zero weight from $(i,0)$ to $(i+k,n)$ must have $n$ such steps, of which $k$ must be diagonal.
    If the steps ending in rows $1\leq j_1<\dotsc<j_k\leq n$ are the diagonal steps, then the path has weight $x_{j_1}\dotsb x_{j_k}$.
    For an example of such a path, see Fig.~\ref{fig:example_path_e}.
    Summing over all possible paths gives $e_k(x_1,\dotsc,x_n)$.
  \end{proof}
  \begin{figure}
    \centering
    \includegraphics[width=0.7\textwidth]{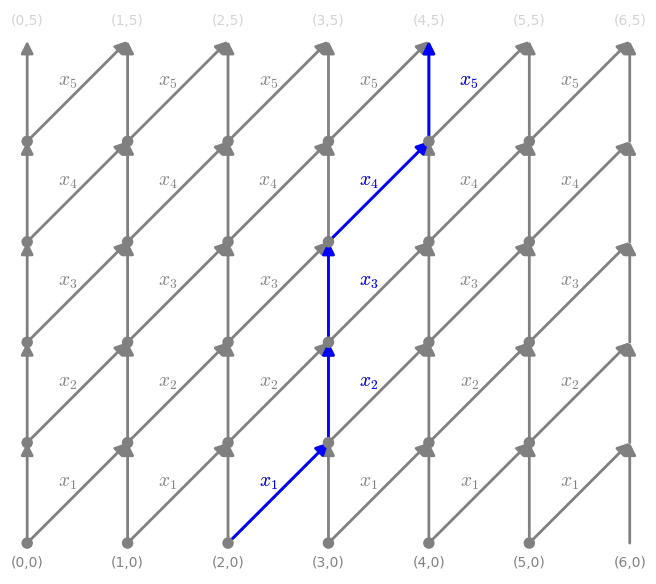}
    \caption{A path from $(2,0)$ to $(4,5)$ whose weight is the monomial $x_1x_4$ in $e_2(x_1,\dotsc,x_5)$.}
    \label{fig:example_path_e}
  \end{figure}
  Suppose that the conjugate partition of $\lambda$ is $\lambda'=(\lambda'_1,\dotsc,\lambda'_k)$.
  In order to apply the Lindstr\"om-Gessel-Viennot lemma to obtain the second Jacobi-Trudi identity, take $A_i=(k-i, 0)$ and $B_i=(\lambda'_i+k-i, n)$ for $i=1,\dotsc,k$.
  Then by Lemma~\ref{lemma:entry-e},
  \begin{displaymath}
    \sum_{\omega_i:A_i\to B_j} v(\omega) = e_{\lambda'_j+i-j}.
  \end{displaymath}
  So the left-hand-side of the second Jacobi-Trudi identity is the left-hand-side of the Lindstr\"om-Gessel-Viennot lemma.

  The right hand side of the Lindstr\"om-Gessel-Viennot lemma consists of a sequence of non-crossing paths $(\omega_1,\dotsc,\omega_n)$, where $\omega_i:A_i\to B_i$.
  Reading the row numbers where the upper-rightward steps in $\omega_i$ terminate gives a strictly increasing sequence of integers $1\leq j_1 < \dotsb < j_{\lambda'_i}\leq n$.
  Enter these numbers into the $i$th column of the Young diagram of $\lambda$.
  Since the paths are non-crossing, the $j$th upper-rightward step of $\omega_i$ must be no lower than the $j$th upper-rightward step of $\omega_{i+1}$.
  This means that the rows of the resulting numbering are weakly increasing, resulting in a semistandard tableau of shape $\lambda$
  \begin{figure}
    \centering
    \raisebox{-0.5\height}{\includegraphics[width=0.7\textwidth]{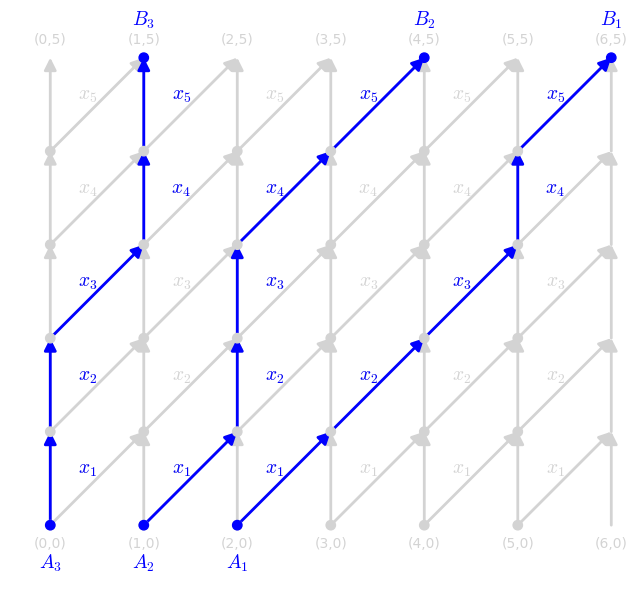}}\hspace{1cm}\raisebox{-0.5\height}{$\ytableaushort{113,24,35,5}$}
    \caption{Non-crossing paths and corresponding tableau.}
    \label{fig:jte}
  \end{figure}
  (for an example, see Figure~\ref{fig:jte}).
  Thus, it follows from the Lindstr\"om-Gessel-Viennot lemma that
  \begin{displaymath}
    \det(e_{\lambda_j+i-j}) = \sum_{t\in \tab(\lambda')} x^t,
  \end{displaymath}
  proving the second Jacobi-Trudi identity.
\end{proof}
\subsection{Skew-Schur Polynomials}
\label{sec:skew-schur-functions}
In the proof of the first Jacobi-Trudi identity for $s_\lambda$, where $\lambda=(\lambda_1,\dotsc, \lambda_l)$, we used:
\begin{align*}
  A_i & = (l-i, 1),\\
  B_i & = (\lambda_i+l-i, n)
\end{align*}
for $i=1,\dotsc, l$.
Now suppose $\mu = (\mu_1,\dotsc, \mu_l)$ is a partition (possibly padded with zero's so that it has the same number of parts as $\lambda$) such that $\mu\subset \lambda$, then we can take:
\begin{align*}
  A_i & = (\mu_i+l-i, 1),\\
  B_i & = (\lambda_i+l-i, n).
\end{align*}
Consider a collection of non-crossing paths $\omega_i:A_i\to B_i$, $i=1,\dotsc,l$.
The path $\omega_i$ has $\lambda_i-\mu_i$ horizontal steps.
If these steps occur in rows $1\leq k_1\leq \dotsb \leq k_{\lambda_i-\mu_i}\leq n$, then enter the integers $k_1,\dotsc,k_{\lambda_i-\mu_i}$ into the $i$th row of the skew-shape.
As in the proof of the first Jacobi-Trudi identity, this results in a semistandard tableau of skew-shape $\lambda/\mu$.
\begin{figure}[h]
  \centering 
  \raisebox{-0.5\height}{\includegraphics[width=0.6\textwidth]{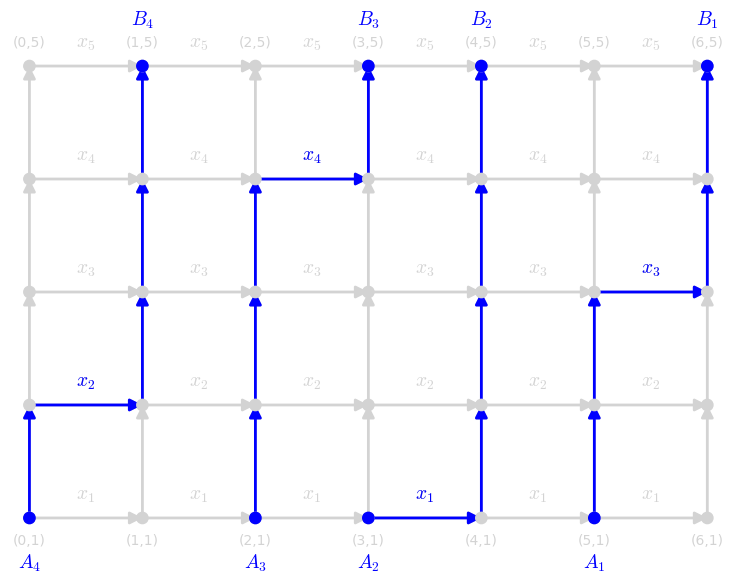}}  \hspace{1cm}  \raisebox{-0.5\height}{$\ytableaushort{\none\none 3,\none 1,\none 4,2}$}
  \caption{Non-crossing paths and the corresponding skew-tableau.}
  \label{fig:skew-jth}
\end{figure}
For an example, see Figure~\ref{fig:skew-jth}.

Therefore the symmetric polynomial
\begin{equation}
  \label{eq:skew-jth}
  s_{\lambda/\mu}(x_1,\dotsc,x_n) = \det(h_{\lambda_j-\mu_j+i-j}),
\end{equation}
by the Lindstr\"om-Gessel-Viennot, is also given by
\begin{equation}
  \label{eq:skew-kostka}
  s_{\lambda/\mu}(x_1,\dotsc,x_n) = \sum_{t\in \Tab_n(\lambda/\mu)} x^t.
\end{equation}
The polynomials $s_{\lambda/\mu}$ generalize the Schur polynomials and are called \emph{skew-Schur polynomials}.
A modification of the proof of the second Jacobi-Trudi identity gives:
\begin{equation}
  \label{eq:skew-jti}
  s_{\lambda/\mu}(x_1,\dotsc,x_n) = \det(e_{\lambda'_j-\mu'_j+i-j}),
\end{equation}
\begin{exercise}
  Expand the skew-Schur polynomial $s_{(2,1)/(1)}(x_1,x_2,x_3)$ in the basis of Schur polynomials.
\end{exercise}
\subsection{Giambelli's Identity}
\label{sec:giambelli}
\begin{definition}
  [Frobenius coordinates]
  Let $\lambda=(\lambda_1,\dotsc,\lambda_l)$ be a partition.
  Its \emph{Durfee rank} $d$ is defined to be the largest integer $i$ such that $(i,i)$ lies in the Young diagram of $\lambda$.
  Let $\alpha_i$ denote the number of cells in the $i$th row that lie strictly to the right of $(i,i)$ in the Young diagram of $\lambda$.
  Similarly let $\beta_i$ denote the number of cells in the $i$th column that lie strictly below $(i,i)$.
  Clearly $\alpha_1>\dotsb>\alpha_d$, $\beta_1>\dotsb>\beta_d$, and the Young diagram of $\lambda$ can be recovered from the data $(\alpha|\beta)=(\alpha_1,\dotsc,\alpha_d|\beta_1,\dotsc,\beta_d)$, which are called the \emph{Frobenius coordinates} of $\lambda$\footnote{While constructing the character tables of symmetric groups, Frobenius used these coordinates to index the irreducible representation, while he used the ordinary coordinates to index the conjugacy classes.}.
\end{definition}
\begin{example}
  The hook partition $(a+1,1^b)$ has Frobenius coordinates $(a|b)$.
  Hook partitions are precisely those partitions which have Durfee rank $1$.
  The partition with Frobenius coordinates $(5,2,1|4,3,0)$ is $(6,4,4,2,2)$.
  If $\lambda$ has Frobenius coordinates $(\alpha|\beta)$, then its conjugate $\lambda'$ has Frobenius coordinates $(\beta|\alpha)$.
  The size of a partition with Durfee rank $d$ and Frobenius coordinates $(\alpha|\beta)$ is $d+|\alpha|+|\beta|$. 
\end{example}
Schur polynomials of hook partitions can be calculated using Exercise~\ref{exercise:hook-schur}, which, when written is terms of Frobenius coordinates, becomes:
\begin{equation}
  \label{eq:hook-schur-frob}
  s_{(a|b)} = \sum_{l=0}^b (-1)^l h_{a+l+1}e_{b-l}
\end{equation}
\begin{theorem}
  [Giambelli's formula]
  For a partition $(\alpha_1,\dotsc,\alpha_d|\beta_1,\dotsc,\beta_d)$  in Frobenius coordinates,
  \begin{equation}
    \label{eq:giambelli}
    s_{(\alpha|\beta)} = \det(s_{(\alpha_j|\beta_i)})_{d\times d}.
  \end{equation}
  Note that the determinant on the right consists of hook-partition Schur polynomials, which are given by \textup{(\ref{eq:hook-schur-frob})}.
\end{theorem}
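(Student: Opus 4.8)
The plan is to prove Giambelli's formula by the Lindström--Gessel--Viennot lemma (Theorem~\ref{lemma:lgv}), exactly as the Jacobi--Trudi identities were proved, but now with one lattice path per \emph{principal hook} of $\lambda$ rather than one per row or column. The cells of the Young diagram of $\lambda$ split into $d$ nested hooks, the $i$th of which consists of the diagonal cell $(i,i)$, an arm of $\alpha_i$ cells to its right, and a leg of $\beta_i$ cells below it. The $i$th hook, filled inside a semistandard tableau, is itself a hook tableau of shape $(\alpha_i|\beta_i)$, so by Corollary~\ref{corollary:kostka-def-schur} its weight generating function is $s_{(\alpha_i|\beta_i)}$. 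Thus the diagonal entries of the matrix $(s_{(\alpha_j|\beta_i)})$ are precisely the generating functions of the individual principal hooks, while the off-diagonal entries are generating functions of ``mismatched'' hooks whose leg length $\beta_i$ comes from the source index and whose arm length $\alpha_j$ comes from the sink index.

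First I would realize each hook Schur polynomial as a path sum. I glue the two weighted lattices used for the two Jacobi--Trudi identities along the main diagonal: below the diagonal I use the diagonal ``$e$-steps'' of Lemma~\ref{lemma:entry-e}, so that a path segment there records a strictly increasing leg, and above the diagonal I use the horizontal ``$h$-steps'' of Lemma~\ref{lemma:entry}, so that a segment records a weakly increasing arm; the two segments meet at the corner cell on the diagonal, whose value is the common minimum of arm and leg. With the source $A_i$ chosen to force exactly $\beta_i$ leg-steps and the sink $B_j$ chosen to force exactly $\alpha_j$ arm-steps (placing them in strictly staircased positions, which is possible because $\beta_1>\dotsb>\beta_d$ and $\alpha_1>\dotsb>\alpha_d$), a single path from $A_i$ to $B_j$ traverses a hook tableau of shape $(\alpha_j|\beta_i)$, and summing over all such paths gives $\sum_{\omega:A_i\to B_j}v(\omega)=s_{(\alpha_j|\beta_i)}$, with (\ref{eq:hook-schur-frob}) available as a check on the count.

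With the matrix entries identified, I would verify the crossing condition of Definition~\ref{definition:crossing-condition}: for $i<j$ and $i'<j'$, a nonzero-weight path $A_i\to B_{j'}$ and a nonzero-weight path $A_j\to B_{i'}$ must cross. Because the sources and sinks are staircased by the strictly decreasing Frobenius coordinates, the endpoints of these two paths interleave, forcing an intersection exactly as in the Jacobi--Trudi arguments. The lemma then yields $\det(s_{(\alpha_j|\beta_i)})=\sum v(\omega_1)\dotsb v(\omega_d)$, the sum being over families of pairwise non-crossing paths $\omega_i:A_i\to B_i$, that is, over families in which the $i$th path traces a hook of shape $(\alpha_i|\beta_i)$.

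It remains to match these non-crossing families with semistandard tableaux of shape $\lambda$ through the principal-hook decomposition, and this is where I expect the real work to lie. Every tableau of shape $\lambda$ decomposes uniquely into its $d$ principal hooks, producing $d$ paths, and conversely $d$ hook-paths reassemble into a filling of $\lambda$. The content of the argument is that ``pairwise non-crossing'' is equivalent to ``the reassembled filling is semistandard'': the non-crossing of the arm-segments of hooks $i$ and $i+1$ (occupying the adjacent rows $i$ and $i+1$) must translate into strict increase down columns, and the non-crossing of the leg-segments (occupying the adjacent columns $i$ and $i+1$) into weak increase along rows, with the diagonal corner cells $(i,i)$ correctly interleaved. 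Once this equivalence is checked, the right-hand side of the lemma becomes $\sum_{t\in\Tab_n(\lambda)}x^t$, which is $s_\lambda$ by Corollary~\ref{corollary:kostka-def-schur}. The two delicate points are thus the precise gluing of the two half-lattices so that one monotone path can encode the ``V-shaped'' reading of a hook (leg decreasing to the corner, then arm increasing), and the translation of the non-crossing condition into the semistandard conditions at the hook interfaces.
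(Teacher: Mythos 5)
Your proposal follows essentially the same route as the paper: both prove Giambelli's identity via the Lindstr\"om--Gessel--Viennot lemma on a lattice obtained by gluing the ``$e$-step'' region of the second Jacobi--Trudi proof to the ``$h$-step'' region of the first, with sources indexed by the $\beta_i$ and sinks by the $\alpha_j$ so that a single path encodes a hook tableau of shape $(\alpha_j|\beta_i)$, and with non-crossing families corresponding to semistandard tableaux of shape $\lambda$ via the principal-hook decomposition. The two ``delicate points'' you flag are exactly the ones the paper also leaves to the reader.
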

\begin{example}
  The Schur polynomial for $\lambda=(4,4,3,1)=(3,2,0|3,1,0)$ can be computed as:
  \begin{displaymath}
    s_{(3,2,1|3,1,0)} = \det
    \begin{pmatrix}
      s_{(3|3)} & s_{(2|3)} & s_{(0|3)}\\
      s_{(3|1)} & s_{(2|1)} & s_{(0|1)}\\
      s_{(3|0)} & s_{(2|0)} & s_{(0|0)}\\
    \end{pmatrix}
  \end{displaymath}
\end{example}
\begin{proof}
  Giambelli's identity can be proved using the Lindstr\"om-Gessel-Viennot Lemma.
  Let $\lambda=(\alpha_1,\dotsc,\alpha_d|\beta_1,\dotsc,\beta_d)$ be given.
  Working with $n$ variables, set
  \begin{displaymath}
    S = \{(i,j)\mid 1\leq j \leq n\; i\geq 0\} \cup \{(-i,j)\mid 1\leq j\leq n+1\; i>0\}.
  \end{displaymath}
  \begin{figure}[h]
    \centering
    \raisebox{-0.5\height}{\includegraphics[width=0.7\textwidth]{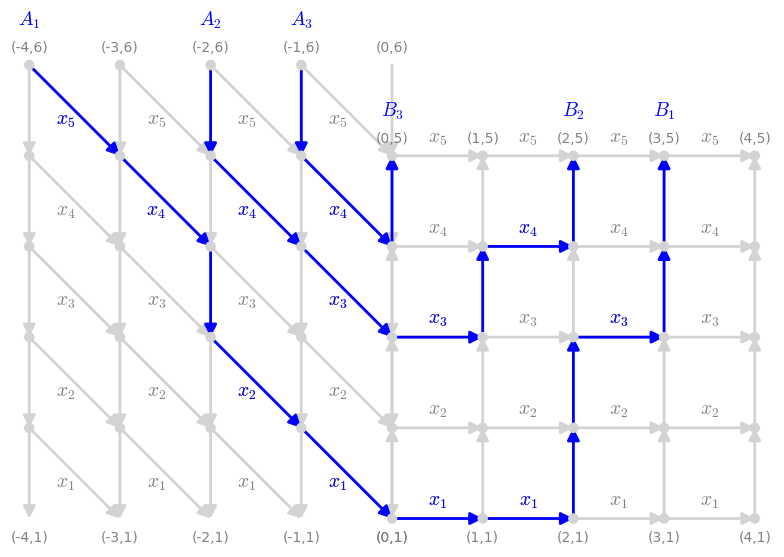}}\hspace{1cm}$\ytableaushort{1113,2334,444,5}$
    \caption{Non-crossing paths and corresponding tableau.}
    \label{fig:giambelli}
  \end{figure}
  \ytableausetup{smalltableaux}
  Define a weight function as follows (see Fig.~\ref{fig:giambelli}):
  \begin{align*}
    v((-i,j+1), v(-i,j)) & = 1 \text{ for all } i> 0, 0 \leq j \leq n,\\
    v((-i,j+1), v(-(i-1),j)) & = x_j \text{ for all } i>0, j\geq 1,\\
    v((i,j), (i+1,j)) & = x_j \text{ for all } i\geq 0, 1\leq j\leq n,\\
    v((i,j),(i,j+1)) & = 1 \text{ for all } i\geq 0, 1\leq j \leq n.
  \end{align*}
  All other weights are set to zero.
  A path from $(-(b-1),n+1)$ to $(a,n)$ has weight $x^t$, where $t$ is a semistandard tableau of shape $(a|b)$ in $x_1,\dotsc,x_n$.
  For instance, the path in Fig.~\ref{fig:giambelli} from $(-4,6)$ to $(3,5)$ corresponds to the tableau $\ytableaushort{1113,2,4,5}$.

  Thus, setting $A_i=(-(\beta_i-1),n+1)$ and $B_i=(\alpha_i,n)$ for $i=1,\dotsc,d$, 
  the $(i,j)$th entry of the determinant on the right hand side of (\ref{eq:giambelli}) can be written as:
  \begin{equation}
    \label{eq:giambelli-det}
    \sum_{\omega:A_i\to B_j} v(\omega).
  \end{equation}
  It is not hard to see that the non-crossing path configurations $\{\omega_i:A_i\to B_i\}$ correspond to semistandard tableau of shape $(\alpha|\beta)$ (for an example, see Fig.~\ref{fig:giambelli}).
\end{proof}
\subsection{Greene's Theorem}
\label{sec:greenes-theorem}
Given a word $w = (a_1,\dotsc,a_k)\in L_n^*$, a subword is a word of the form $w' = a_{i_1}\dotsb a_{i_r}$, where $1\leq i_1<\dotsb < i_r\leq k$.
This section is concerned with the enumeration of subwords which are rows or columns.
For the purposes of such enumeration, given $1 \leq j_1<\dotsb <j_r\leq k$, $w'' = a_{j_1}\dotsb a_{j_r}$ will be considered to be a different subword from $w'$ \emph{even if} $w'=w''$ as words, unless the indices $j_1,\dotsc, j_r$ coincide with the indices $i_1,\dotsc,i_r$.
Two subwords of $w$ will be said to be disjoint, if their indexing sets are disjoint.
\begin{example}
  The word $w=111$ has three subwords of length two, all of them equal to $11$.
  No two of these subwords are disjoint.
  However, each of them is disjoint from a subword of $w$ of length one.
\end{example}
\begin{definition}
  [Greene invariants]
  \label{definition:Greene-invars}
  Given $w \in L_n^*$, for each integer $k\geq 0$, let $l_k(w)$ denote the maximum cardinality of a union of $k$ pairwise disjoint weakly increasing subwords of $w$.
  Let $l'_k(w)$ denote the maximum cardinality of a union of $k$ pairwise disjoint strictly decreasing subwords of $w$.
\end{definition}
\begin{example}
  If $w=2133$, then $l_1(w)=3$, $l_k(w)=4$ for all $k\geq 2$.
  Also, $l'_1(w) = 2$, $l'_2(w)=3$, and $l'_k(w)=4$ for all $k\geq 3$.
\end{example}
\begin{theorem}
  [Greene's Theorem]
  \label{theorem:Greene}
  Given a word $w$, define a partition $\lambda=(\lambda_1,\lambda_2,\dotsc)$ by $\lambda_k=l_k(w)-l_{k-1}(w)$ for each $k\geq 1$.
  Then $\lambda$ is the shape of $P(w)$.
  Moreover, if $\mu'_k=l'_k(w)-l'_{k-1}(w)$ for each $k\geq 1$, then $\mu=(\mu_1,\mu_2,\dotsc)$ is the partition conjugate to $\lambda$.
\end{theorem}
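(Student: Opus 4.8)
The plan is to reduce the statement to a direct computation on a single semistandard tableau, exploiting the plactic machinery already developed. The structural input is that $w\equiv P(w)$ in the plactic monoid. Hence, \emph{provided} the Greene invariants $l_k$ and $l'_k$ are shown to depend only on the plactic class of a word, we get $l_k(w)=l_k(P(w))$ and $l'_k(w)=l'_k(P(w))$, and it remains only to evaluate these two families on the reading word of an arbitrary tableau. I would therefore organise the argument in three stages: (i) compute $l_k$ and $l'_k$ for a tableau; (ii) prove that both are invariant under the Knuth relations $(K1)$ and $(K2)$; and (iii) combine them.

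Stage (i) is where the shape actually emerges, and it is clean. Let $t$ be a semistandard tableau of shape $\lambda$, identified with its reading word (bottom row first, moving upward). Two observations drive everything. First, in the reading word any two cells of a common column appear in strictly decreasing order of value, since the lower cell, which carries the larger entry, is read first; dually, any two cells of a common row appear in weakly increasing order. Consequently a weakly increasing subword meets each column at most once, while a strictly decreasing subword meets each row at most once. Thus a union of $k$ pairwise disjoint weakly increasing subwords uses at most $\min(k,\lambda'_j)$ cells from column $j$, so its size is at most $\sum_j \min(k,\lambda'_j)=\lambda_1+\dotsb+\lambda_k$; and this bound is attained by the top $k$ rows, each a contiguous weakly increasing block of the reading word. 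Hence $l_k(t)=\lambda_1+\dotsb+\lambda_k$, so $l_k(t)-l_{k-1}(t)=\lambda_k$. The mirror-image argument, interchanging the roles of rows and columns, gives $l'_k(t)=\lambda'_1+\dotsb+\lambda'_k$ (the first $k$ columns, each read as a strictly decreasing subword, attain the bound), so $l'_k(t)-l'_{k-1}(t)=\lambda'_k$.

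Stage (ii) is the main obstacle. I would show that a single application of $(K1)$ or $(K2)$, which permutes three consecutive letters of a word, changes neither $l_k$ nor $l'_k$ for any $k$. The method is to take an optimal family of $k$ pairwise disjoint weakly increasing subwords for one word and to manufacture from it a family of the same total size for the Knuth-adjacent word, and conversely; since the Knuth moves are symmetric this yields equality. The analysis is entirely local, as only the three permuted positions can be affected, so it splits into cases according to which of the three letters are used and by which of the $k$ subwords. The inequalities $x\leq y<z$ for $(K1)$ and $x<y\leq z$ for $(K2)$ are precisely what is needed to reroute a subword through the permuted block without losing length or breaking monotonicity, and this bookkeeping is the genuinely delicate part. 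The decreasing invariant $l'_k$ is treated by the analogous local argument.

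Finally, in stage (iii) I combine the pieces. Plactic invariance gives $l_k(w)=l_k(P(w))$ and $l'_k(w)=l'_k(P(w))$, and Stage (i) applied to $P(w)$, of shape $\lambda=\shape(P(w))$, yields $l_k(w)-l_{k-1}(w)=\lambda_k$ and $l'_k(w)-l'_{k-1}(w)=\lambda'_k$. The first identity shows that the partition built from the successive differences of the $l_k$ is exactly $\shape(P(w))$, while the second shows that the partition read off from the $l'_k$ is its conjugate $\lambda'$, as the theorem asserts. (As a by-product, since $\lambda$ is a genuine partition, the differences $l_k-l_{k-1}$ are automatically weakly decreasing, so no separate concavity argument for $l_k$ is required.)
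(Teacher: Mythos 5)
Your proposal is correct and follows essentially the same route as the paper: establish the theorem directly for reading words of tableaux (top $k$ rows attain the bound, the column/row monotonicity of the reading word gives the matching upper bound), prove Knuth-invariance of the Greene invariants by a local rerouting argument, and combine via $w\equiv P(w)$. The paper likewise only sketches the case analysis for the Knuth moves, carrying out one direction of $(K1)$ explicitly, so your level of detail in stage (ii) matches the source.
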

\begin{proof}
  Greene's theorem follows by putting together two relatively simple observations---the first is that the Greene invariants $l_k(w)$ and $l'_k(w)$ remain unchanged when either of the Knuth relations (\ref{eq:k1}) and (\ref{eq:k2}) is applied to $w$.
  The second is that when $w$ is the reading word of a semistandard tableau, then Greene's theorem holds.

  To see the first, suppose that $w$ is of the form $u_1xzyu_2$, with $x\leq y < z$, and arbitrary $u_1,u_2\in L_n^*$.
  Applying a Knuth transformation of the form (\ref{eq:k1}), $w$ transforms to $w'=u_1zxyu_2$.
  Any weakly increasing subword of $w'$ is also a weakly increasing subword of $w$, so $l_k(w')\leq l_k(w)$.
  On the other hand, if $v$ is a weakly increasing subword of $w$ of the form $v_1xzv_2$ (where $v_i$ is a subword of $u_i$ for $i=1,2$) it will not necessarily remain a weakly increasing subword of $w'$.
  However, $v_1xyv_2$ \emph{is} a weakly increasing subword of $w$.
  If a collection of $k$ weakly increasing subwords of $w$ contains $v_1xzv_2$ and another weakly increasing subword $v_1'yv_2'$, replacing them by $v_1yzv'_2$ and  $v_1'xv_2$ gives a collection of $k$ weakly increasing subwords of $w'$ of the same cardinality.
  It follows that $l_k(w')\geq l_k(w)$ also holds.
  Thus the Knuth transformation (\ref{eq:k1}) preserves the Greene invariants $l_k(w)$.
  Similar arguments can be used to show that both (\ref{eq:k1}) and (\ref{eq:k2}) preserve all the Greene invariants $l_k(w)$ and $l'_k(w)$.

  Now suppose $w$ is the reading word of a tableau of shape $\lambda$.
  Then the top $k$ rows of $w$ form a union of $k$ pairwise disjoint weakly increasing subwords of total size $\lambda_1+\dotsb+\lambda_k$.
  Also, if $v$ is a weakly increasing subword of $w$, then the fact that the columns of $t$ are strictly increasing (and that the rows are read from \emph{bottom to top}) implies that $v$ cannot contain more than one element from each column of $w$.
  Therefore, any collection of $k$ pairwise disjoint weakly increasing subwords of $w$ can have at most $k$ entries in each column of $w$.
  Thus no union of $k$ pairwise disjoint weakly increasing subwords of $w$ can have cardinality more than $\lambda_1+\dotsb+\lambda_k$
  Therefore, $l_k(w)=\lambda_1+\dotsb+\lambda_k$.

  Similarly, the leftmost $k$ columns of $w$ (read bottom to top) form a union of $k$ pairwise disjoint strictly decreasing subwords of $w$, and any such union can only contain $k$ elements from each row.
  It follows that if $\lambda'$ is the partition conjugate to $\lambda$, then $l'_k(w)=\lambda'_1+\dotsb + \lambda'_k$.
\end{proof}
\subsection{The Robinson-Schensted-Knuth Correspondences}
\label{sec:rsk}
For an $m\times n$ matrix $A=(a_{ij})$, the column word $u_A$, row word $v_A$ and their duals $\cev u_A$ and $\cev v_A$ are defined as follows:
\begin{align*}
  u_A &= 1^{a_{11}}2^{a_{12}}\dotsb n^{a_{1n}} \; 1^{a_{21}} 2^{a_{22}} \dotsb n^{a_{2n}} \; \dotsb \; 1^{a_{m1}} 2^{a_{m2}} \dotsb n^{a_{mn}}\\
  v_A &= 1^{a_{11}}2^{a_{21}}\dotsb m^{a_{m1}} \; 1^{a_{12}} 2^{a_{22}} \dotsb m^{a_{m2}} \; \dotsb \; 1^{a_{1n}} 2^{a_{2n}} \dotsb m^{a_{mn}}\\
  \cev u_A &= n^{a_{1n}} \dotsb 2^{a_{12}}1^{a_{11}}\; n^{a_{2n}}\dotsb 2^{a_{22}}1^{a_{21}} \; n^{a_{mn}} \dotsb 2^{a_{m2}} 1^{a_{11}}\\
  \cev v _A &= m^{a_{m1}}\dotsb 2^{a_{21}}1^{a_{11}} \; m^{a_{m2}}\dotsb 2^{a_{22}} 1^{a_{12}} \; \dotsb \; m^{a_{mn}}\dotsb  2^{a_{2n}} \dotsb 1^{a_{1n}}
\end{align*}
\begin{definition}
  [Robinson-Schensted-Knuth Correspondences]
  Define functions from integer matrices onto pairs of semistandard tableaux by:
  \begin{align*}
    \rsk(A) & = (P(u_A), P(v_A)),\\
    \rsk^*(A) & = (P^*(u_A), P(\cev v_A)).
  \end{align*}
\end{definition}
\begin{lemma}
  \label{lemma:same_shape}
  For every integer matrix $A$ with non-negative entries, the tableaux $P(u_A)$ and $P(v_A)$ have the same shape.
  For every zero-one matrix $A$, the tableaux $P^*(u_A)$ and $P(\cev v_A)$ have the same shape.
\end{lemma}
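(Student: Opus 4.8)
The plan is to deduce both statements from Greene's Theorem (Theorem~\ref{theorem:Greene}), which reconstructs the shape of $P(w)$ from the numbers $l_k(w)$ alone (its parts are $\lambda_k=l_k(w)-l_{k-1}(w)$). Thus, for each equality of shapes, it suffices to match the relevant Greene invariants of the two words. The bridge between the words attached to a matrix $A=(a_{ij})$ is the multiset $M_A$ of cells, in which the pair $(i,j)$ occurs with multiplicity $a_{ij}$; partially order $M_A$ by the componentwise order, $(i,j)\le(i',j')$ iff $i\le i'$ and $j\le j'$. Both $u_A$ and $v_A$ are readings of the \emph{same} multiset $M_A$: the letters of $u_A$ are the $j$-coordinates listed in $(i,j)$-lexicographic order, while the letters of $v_A$ are the $i$-coordinates listed in $(j,i)$-lexicographic order.

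For the first statement I would observe that a weakly increasing subword of $u_A$ is exactly a subsequence of cells along which $j$ is nondecreasing; since $i$ is already nondecreasing in the reading order of $u_A$, such subwords are precisely the chains of $M_A$ for the componentwise order. The identical description holds for $v_A$ (now $j$ is automatically nondecreasing, and we require $i$ nondecreasing). Disjointness of subwords corresponds to using each copy of each cell at most once, so for every $k$ both $l_k(u_A)$ and $l_k(v_A)$ equal the maximal total size of $k$ pairwise disjoint chains in $M_A$. Hence $l_k(u_A)=l_k(v_A)$ for all $k$, and Greene's Theorem gives $\shape(P(u_A))=\shape(P(v_A))$.

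For the second statement $A$ is a zero-one matrix, so $M_A$ is an ordinary \emph{set}. I would use the dual form of Greene's theorem for the dual insertion $P^*$: running the proof of Theorem~\ref{theorem:Greene} with the roles of weak and strict inequalities exchanged shows that the shape of $P^*(w)$ is recovered from $l^*_k(w)$, the maximal size of a union of $k$ pairwise disjoint \emph{strictly} increasing subwords of $w$. Now a strictly increasing subword of $u_A$ is a subsequence of cells with $j$ strictly increasing, hence (as $i$ is nondecreasing in the reading order) a chain for the order $(i,j)\prec(i',j')$ iff $i\le i'$ and $j<j'$. On the other side, a weakly increasing subword of $\cev v_A$ is a subsequence with $i$ nondecreasing; because within a fixed column the row indices are listed \emph{decreasingly} and $M_A$ has no repeated cell, no two chosen cells can share a column, so $j$ is forced to increase strictly while $i$ increases weakly --- the same chains for $\prec$. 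Therefore $l^*_k(u_A)=l_k(\cev v_A)$ for all $k$, and comparing the dual Greene theorem for $P^*(u_A)$ with the ordinary one for $P(\cev v_A)$ yields $\shape(P^*(u_A))=\shape(P(\cev v_A))$.

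The routine content is the verification, in each case, that ``increasing subword'' and ``chain'' describe the same objects and that disjointness matches up; the only real obstacle is the second statement, where one must (i) keep precise track of which inequalities are strict so that the two chain orders genuinely coincide, and (ii) have in hand the dual Greene theorem for $P^*$. Point (ii) is not a separate difficulty: it is proved by repeating the argument of Theorem~\ref{theorem:Greene} verbatim after interchanging weak and strict inequalities, so I would either cite that adaptation or record it as a short lemma.
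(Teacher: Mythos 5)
Your proposal is correct and follows essentially the same route as the paper: both reduce the claim to Greene's theorem by identifying weakly (resp.\ strictly) increasing subwords of $u_A$, $v_A$, and $\cev v_A$ with chains in the cell poset of $A$ under the componentwise order (resp.\ the order with $j$ strict), so that the relevant Greene invariants coincide. Your explicit remark that a dual Greene theorem for $P^*$ (recovering $\shape(P^*(w))$ from the $l_k^*(w)$) is needed and must be supplied by rerunning the argument with weak and strict inequalities swapped is a point the paper leaves implicit, and is worth recording.
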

\begin{proof}
  The proof is an application of Greene's theorem (Theorem~\ref{theorem:Greene}).
  Any weakly increasing subword of $u_A$ comes from reading the column number of a sequence of entries $(i_1,j_1),\dotsc,(i_r,j_r)$ with repetitions of up to $a_{i_1j_1},\dotsc, a_{i_rj_r}$ respectively, with $i_1\leq \dotsb \leq i_r$ and $j_1\leq \dotsc\leq j_r$.
  If $A$ is an $m\times n$ matrix, its entries are indexed by the rectangular lattice $P_{mn} = \{(i,j)\mid 1\leq i\leq m,\;1\leq j\leq n\}$ which may be regarded as a poset under $(i,j)\leq (i',j')$ if $i\leq i'$ and $j\leq j'$.
  It follows that
  \begin{displaymath}
    l_k(u_A) = \max_{C_k} \Big\{\sum_{(i,j)\in C_k} a_{ij}\Big\}
  \end{displaymath}
  where the maximum is over all subsets $C_k$ of $P_{mn}$ which can be written as a union of $k$ chains in the partially ordered set $P_{mn}$.
  This description of the shape of $P(u_A)$ is invariant under interchanging the rows and columns of $A$, and therefore also the shape of $P(v_A)$.

  For the dual RSK correspondence, if $A$ is a $0$-$1$ matrix, note that a strictly increasing subword of $u_A$ comes from reading the column number of a sequence of entries $(i_1,j_1),\dotsc,(i_r,j_r)$ with entries equal to $1$, and with $i_1\leq \dotsb \leq i_r$ and $j_1<\dotsc<j_r$.
  Define a new partial order $P_{mn}$ by $(i,j)<(i',j')$ if $i\leq i'$ and $j<j'$.
  It follows that
  \begin{displaymath}
    l_k^*(u_A) = \max_{C_k} \Big\{\sum_{(i,j)\in C_k} a_{ij}\Big\}
  \end{displaymath}
  where the maximum is over all subsets $C_k\subset P_{mn}$ which can be written as a union of $k$ chains in the new partial order.
  On the other hand, the row numbers in the sequence of entries $(i_1,j_1),\dotsc,(i_r,j_r)$ form a weakly increasing subword of $\cev v_A$ if and only if $i_1\leq \dotsb \leq i_r$, and since the entries must come from distinct rows (since $\cev v_A$ reads each row in reverse order and all entries are $0$ or $1$), so $j_1<\dotsb < j_r$.
  Thus $l_k(\cev v_A)=l_k^*(u_A)$, so $P^*(u_A)$ and $P(\cev v_A)$ have the same shape.
\end{proof}
\begin{theorem}
  [Knuth's theorem]
  \label{theorem:knuth}
  Let $\mathbf M_{\mu\nu}$ denote the set of integer matrices with non-negative entries, row sums $(\mu_1,\dotsc, \mu_m)$, column sums $(\nu_1,\dotsc,\nu_n)$.
  Let $\Tab(\lambda,\mu)$ denote the set of semistandard tableaux of shape $\lambda$ and type $\mu$.
  Then $\rsk$ gives rise to a bijection:
  \begin{equation}
    \label{eq:rsk}
    \mathbf M_{\mu\nu}\tilde\to \coprod_\lambda \Tab(\lambda,\nu)\times \Tab(\lambda,\mu).
  \end{equation}
  Similarly let $\mathbf N_{\mu\nu}$ denote the set of zero-one matrices with row sums $(\mu_1,\dotsc, \mu_m)$ and column sums $(\nu_1,\dotsc,\nu_n)$.
  Then $\rsk^*$ gives rise to a bijection:
  \begin{equation}
    \label{eq:drsk}
    \mathbf N_{\mu\nu}\tilde\to \coprod_\lambda \Tab^*(\lambda,\nu)\times \Tab(\lambda,\mu).
  \end{equation}
\end{theorem}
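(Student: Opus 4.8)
The plan is to prove both assertions by constructing explicit inverse maps out of the reverse insertion operator $\del$ of Theorem~\ref{theorem:del-ins}. I would treat the $\rsk$ statement in detail, the $\rsk^*$ statement being strictly parallel with horizontal strips replaced by vertical ones.

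First I would verify that $\rsk$ maps into the stated codomain. The letter $j$ occurs in $u_A$ exactly $\sum_i a_{ij}=\nu_j$ times, and the letter $i$ occurs in $v_A$ exactly $\sum_j a_{ij}=\mu_i$ times, so $P(u_A)$ has type $\nu$ and $P(v_A)$ has type $\mu$; by Lemma~\ref{lemma:same_shape} the two tableaux have a common shape $\lambda$. Hence $\rsk(A)\in\Tab(\lambda,\nu)\times\Tab(\lambda,\mu)$ for that $\lambda$, as required.

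The core of the argument is the inverse. Write $u_A=u^{(1)}\dotsb u^{(m)}$, where the $i$th \emph{block} $u^{(i)}=1^{a_{i1}}\dotsb n^{a_{in}}$ is weakly increasing. By Lemma~\ref{lemma:two-step} (equivalently the horizontal case of Theorem~\ref{theorem:plactic-pieri}), inserting $u^{(i)}$ creates new boxes in strictly increasing columns, so it enlarges the shape by a horizontal strip; labelling each such box with its block number $i$ yields a semistandard recording tableau $R$ of type $\mu$ having the same shape as $P(u_A)$ at every stage. Granting the identity $R=P(v_A)$, the second coordinate of $\rsk(A)$ is a complete schedule of the insertions, and one inverts as follows: the maximal entry $M$ of $R$ marks the boxes built by the last block; its rightmost such box is a removable corner, and peeling these boxes off from right to left while applying $\del$ at the matching rows of $P(u_A)$ (legitimate since the two tableaux keep equal shape) bumps out, in reverse, precisely the letters of $u^{(M)}$, hence the $M$th row of $A$. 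Iterating recovers $A$, and Theorem~\ref{theorem:del-ins} certifies that this procedure is a genuine two-sided inverse of $\rsk$.

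The main obstacle is exactly the symmetry $R=P(v_A)$: a priori $R$ encodes the insertion order of $u_A$, whereas $P(v_A)$ is an insertion tableau of the transposed reading. Since $u_{A^T}=v_A$, this is the classical statement $\rsk(A^T)=(Q,P)$ whenever $\rsk(A)=(P,Q)$. I would establish it by induction on $\sum_{ij}a_{ij}$ (or, more symmetrically, by a growth-diagram argument): the shapes of $R$ and $P(v_A)$ already coincide by Lemma~\ref{lemma:same_shape}, so only the entries need matching, and one checks that deleting the lexicographically last biletter of $A$ removes simultaneously the rightmost maximal box of $R$ and the corresponding corner of $P(v_A)$, the positions being controlled by Greene's theorem (Theorem~\ref{theorem:Greene}) together with the invariance of the Greene invariants under Knuth moves. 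With this symmetry in place, injectivity and surjectivity of $\rsk$ follow immediately. For $\rsk^*$ the identical scheme applies: since $A$ is a zero-one matrix each block of $u_A$ is now strictly increasing, the dual insertion $P^*$ is arranged so that such a block adds a vertical strip, the dual reading $\cev{v}_A$ furnishes the schedule in place of $v_A$, and the remaining point---that the $\rsk^*$ recording tableau equals $P(\cev{v}_A)$---is handled exactly as above through the zero-one case of Lemma~\ref{lemma:same_shape} and Greene's theorem.
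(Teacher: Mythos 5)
Your overall architecture is sound, and the inversion step (peel off the boxes of the last block from right to left and apply $\del$) is exactly the paper's mechanism. But you have correctly located the crux --- the identity $R=P(v_A)$ between your recording tableau and the second RSK component --- and then not actually proved it. You reduce it to the full symmetry theorem $\rsk(A^T)=(Q,P)$ and dispose of that with ``one checks that deleting the lexicographically last biletter of $A$ removes simultaneously the rightmost maximal box of $R$ and the corresponding corner of $P(v_A)$, the positions being controlled by Greene's theorem.'' Greene's theorem (Theorem~\ref{theorem:Greene}) controls the \emph{shape} of $P(w)$, not the location of the single corner that changes when one letter is appended; to extract corner positions from it you would have to track the Greene invariants of \emph{all} prefixes of $u_A$ and of $v_A$ and show the two resulting shape arrays are transposes of one another --- that is the entire content of the growth-diagram proof of symmetry, and it is not a one-line check. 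As written, the central step of your argument is asserted rather than established, and the symmetry theorem you invoke is if anything harder than the bijectivity you are trying to prove.

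The gap can be closed much more cheaply, and this is what the paper does: induct on the number of rows of $A$ rather than on single biletters. Writing $A'$ for the first $m-1$ rows and $r$ for the last row (a weakly increasing word), the inductive hypothesis gives $A\leftrightarrow(P(u_{A'}),P(v_{A'}),r)$, and the plactic Pieri bijection (Theorem~\ref{theorem:plactic-pieri}) sends $(t_1,r)\mapsto P(t_1r)$ bijectively onto tableaux whose shape exceeds $\shape(t_1)$ by a horizontal strip. The point is that at this level of granularity you never need to know \emph{which} corner each individual letter creates: the new entries of the recording tableau are all equal to the maximal letter $m$, so once Lemma~\ref{lemma:same_shape} forces $\shape(P(v_A))=\shape(P(u_A))$ (and $\shape(P(v_{A'}))=\shape(P(u_{A'}))$), the positions of the $m$'s in $P(v_A)$ are completely determined as the unique filling of the horizontal strip --- there is exactly one semistandard way to place equal maximal letters in a horizontal strip. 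This yields $P(v_A)=P(v_{A'})^{\uparrow m}=R$ with no appeal to symmetry, and bijectivity follows by composing the two bijections. I would recommend restructuring your block-by-block argument along these lines; everything else in your proposal, including the treatment of $\rsk^*$ with vertical strips, then goes through.
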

\begin{proof}
  We will show that $\rsk$ is a bijection:
  \begin{displaymath}
    \mathbf M_{m\times n} \tilde\to \coprod_\lambda \Tab_n(\lambda)\times \Tab_m(\lambda).
  \end{displaymath}
  For the definition it is clear that this bijection will map the left hand side of (\ref{eq:rsk}) onto its right hand side.

  Let $A'$ be the matrix consisting of the first $m-1$ rows of $A$.
  Let $r=1^{a_{m1}}2^{a_{m2}}\dotsb n^{a_{mn}}$ be the column word of the last row of $A$.

  By inducting on the number of rows of $A$ (the base case of one-row matrices is easy), we have bijections:
  \begin{equation}
    \label{eq:bij1}
    \mathbf M_{m\times n} \leftrightarrow \mathbf M_{(m-1)\times n}\times R(L_n) \leftrightarrow \coprod_\lambda \Tab_n(\lambda)\times \Tab_{m-1}(\lambda)\times R(L_n)
  \end{equation}
  given by
  \begin{displaymath}
    A \leftrightarrow (A',r) \leftrightarrow (P(u_{A'}), P(v_{A'}), r).
  \end{displaymath}
  Here $R(L_n)$ denotes the set of all rows (weakly increasing words) in $L_n^*$.

  Define a function
  \begin{equation}
    \label{eq:bij2}
    \Tab_n(\lambda)\times \Tab_{m-1}(\lambda)\times R(L_n) \to \coprod_\mu \Tab_n(\mu)\times \Tab_m(\mu)
  \end{equation}
  by
  \begin{displaymath}
    (t_1, t_2, r)\mapsto (P(t_1r), t_2^{\uparrow m}),
  \end{displaymath}
  where $t_2^{\uparrow m}$ is the unique tableau with the same shape as $P(t_1r)$ such that if all the boxes containing $m$ are removed from it, then $t_2$ is obtained.

  It turns out that the above function is invertible.
  Given tableaux $(t'_1, t'_2)\in \Tab_n(\mu)\times \Tab_m(\mu)$.
  Let $t_2$ be the tableau obtained from $t'_2$ by removing all the boxes containing $m$.
  Let $\lambda$ be the corresponding shape.
  Obviously $\mu/\lambda$ is a horizontal strip.
  Applying the inverse of the first bijection in Theorem~\ref{theorem:plactic-pieri} recovers $t_1$ and $r$.

  Combining the bijections (\ref{eq:bij1}) and (\ref{eq:bij2}) gives rise to the RSK correspondence, which is therefore also a bijection.

  The proof for the bijectivity of $\rsk^*$ is similar (although with a few twists) and is left as an interesting exercise to the reader.
\end{proof}
\begin{exercise}
  [The Burge Correspondence]
  \label{exercise:burge}
  Define
  \begin{displaymath}
    \bur(A) = (P^*(\cev u_A), P^*(\cev v_A)).
  \end{displaymath}
  Show that $\bur$ is a bijection
  \begin{displaymath}
    \mathbf M_{\mu\nu}\tilde \to \coprod_\lambda \Tab^*(\lambda,\nu)\times \Tab^*(\lambda,\mu).
  \end{displaymath}
\end{exercise}
\subsection{The Littlewood-Richardson Rule}
\label{sec:littlewood-richardson}
\begin{lemma}
  \label{lemma:section}
  Given a partition $\lambda$, fix any $t_\lambda\in \Tab_m(\lambda)$.
  Then
  \begin{displaymath}
    \sum_{\{A_{m\times n}\mid P(v_A) = t_\lambda\}} x^{u_A} = s_\lambda(x_1,\dotsc,x_n).
  \end{displaymath}
\end{lemma}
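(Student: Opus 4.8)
The plan is to recognize the sum as a single fibre of the Robinson--Schensted--Knuth bijection, obtained by holding the recording tableau fixed, and then to read off the value from Kostka's definition of the Schur polynomial.

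First I would invoke Knuth's theorem (Theorem~\ref{theorem:knuth}), in the form established inside its proof: the map $\rsk(A)=(P(u_A),P(v_A))$ is a bijection
\[
  \mathbf M_{m\times n}\;\tilde\to\;\coprod_\nu \Tab_n(\nu)\times\Tab_m(\nu),
\]
from all $m\times n$ matrices with non-negative integer entries onto pairs of semistandard tableaux of a common shape, the first with entries in $L_n$ and the second with entries in $L_m$; that the two factors really do share a shape is exactly Lemma~\ref{lemma:same_shape}. Fixing $t_\lambda\in\Tab_m(\lambda)$ in the second coordinate pins the common shape to $\lambda=\shape(t_\lambda)$, so the fibre $\{A\mid P(v_A)=t_\lambda\}$ is carried bijectively, via $A\mapsto P(u_A)$, onto the whole set $\Tab_n(\lambda)$.

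The second ingredient is that the summand $x^{u_A}$ depends only on $P(u_A)$. Indeed the insertion tableau $P(u_A)$ is a rearrangement of the letters of $u_A$, so it has the same content, whence $x^{u_A}=x^{P(u_A)}$. Substituting this into the sum and reindexing along the bijection of the previous paragraph gives
\[
  \sum_{\{A\mid P(v_A)=t_\lambda\}} x^{u_A}
    = \sum_{t\in\Tab_n(\lambda)} x^{t},
\]
and the right-hand side is precisely $s_\lambda(x_1,\dotsc,x_n)$ by Kostka's definition of Schur polynomials (Corollary~\ref{corollary:kostka-def-schur}), which completes the argument.

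Once \rsk\ is available there is no genuinely hard step; the care lies entirely in the bookkeeping. The two points I would flag are, first, that fixing $t_\lambda$ forces the row sums of $A$ (the type of $P(v_A)$) but leaves the column sums free, which is exactly why the insertion tableaux $P(u_A)$ sweep out \emph{all} of $\Tab_n(\lambda)$ rather than a single type; and second, that the weight being accumulated is attached to the insertion side $P(u_A)$, not to the recording side $P(v_A)$ that we are holding fixed. Correctly matching $x^{u_A}$ with the content of $P(u_A)$ is the crux, and it is what turns the fibre of \rsk\ into the tableau sum defining $s_\lambda$.
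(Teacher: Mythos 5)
Your argument is correct and is essentially the paper's own proof: both fix the recording tableau $t_\lambda$, use Knuth's theorem to identify the fibre $\{A\mid P(v_A)=t_\lambda\}$ bijectively with $\Tab_n(\lambda)$ via $A\mapsto P(u_A)$, and conclude with Kostka's definition of $s_\lambda$. Your explicit remark that $x^{u_A}=x^{P(u_A)}$ because insertion only rearranges letters is a point the paper leaves implicit (hidden in the Knuth equivalence $u_A\equiv t$), but it is the same proof.
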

\begin{proof}
  If $P(v_A) = t_\lambda$, a tableau of shape $\lambda$, $P(u_A)$ is also a semistandard tableau of shape $\lambda$.
  Moreover, for every semistandard tableau $t\in \Tab_n(\lambda)$,  by Knuth's theorem (Theorem~\ref{theorem:knuth}), there exists a unique $m\times n$ integer matrix $A$ such that $\rsk(A) = (t, t_\lambda)$.
  In other words, among matrices with $P(v_A)=t_\lambda$, there exists a unique matrix such that $u_A\equiv t$.
  The lemma now follows from Kostka's definition of Schur polynomials (Corollary~\ref{corollary:kostka-def-schur}).
\end{proof}
\begin{theorem}
  [Littlewood-Richardson Rule]
  Let $\alpha$, $\beta$ and $\lambda$ be partitions.
  Let $t_\beta$ be any semistandard tableau in $\Tab_b(\beta)$ for some integer $b$.
  Let $c^\lambda_{\alpha\beta}$ denote the number of semistandard skew-tableaux of shape $\lambda/\alpha$ whose reading word is Knuth-equivalent to $t_\beta$, the unique tableau of shape and type $\beta$.
  Then
  \begin{displaymath}
    s_\alpha s_\beta = \sum_\lambda c^\lambda_{\alpha\beta} s_\lambda.
  \end{displaymath}
\end{theorem}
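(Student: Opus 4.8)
The plan is to reduce everything to the section Lemma~\ref{lemma:section} by a matrix-stacking device. Fix any tableau $t_\alpha\in\Tab_a(\alpha)$, and let $t_\beta\in\Tab_b(\beta)$ be the unique tableau of shape and type $\beta$. By Lemma~\ref{lemma:section},
\begin{displaymath}
  s_\alpha=\sum_{\{A\,:\,P(v_A)=t_\alpha\}}x^{u_A},\qquad s_\beta=\sum_{\{B\,:\,P(v_B)=t_\beta\}}x^{u_B},
\end{displaymath}
the sums ranging over $a\times n$ and $b\times n$ matrices. Stacking such an $A$ on top of such a $B$ produces an $(a+b)\times n$ matrix $C$ with $u_C=u_Au_B$, hence $x^{u_C}=x^{u_A}x^{u_B}$, so that
\begin{displaymath}
  s_\alpha s_\beta=\sum_C x^{u_C},
\end{displaymath}
the sum being over all $C$ whose top $a$ rows $A$ and bottom $b$ rows $B$ satisfy $P(v_A)=t_\alpha$ and $P(v_B)=t_\beta$. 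I would now reorganize this sum according to the single recording tableau $Q:=P(v_C)$.

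The crux is a lemma on how Schensted insertion interacts with an initial segment of the alphabet. For a word $w$ and a threshold $a$, write $w_{\le a}$ and $w_{>a}$ for the subwords on $\{1,\dotsc,a\}$ and its complement. I claim that the cells of $P(w)$ with entries $\le a$ form the straight tableau $P(w_{\le a})$, while the cells with entries $>a$ form a skew tableau whose reading word is Knuth-equivalent to $w_{>a}$. To prove this I would check that each Knuth move (\ref{eq:k1}), (\ref{eq:k2}) either leaves $w_{\le a}$ unchanged or alters it by a Knuth move, and similarly for $w_{>a}$; this is a short case analysis according to how the three letters involved straddle $a$. Consequently Knuth-equivalent words have Knuth-equivalent sub-alphabet subwords. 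Since $w\equiv P(w)$ and the reading word of $P(w)$, restricted to either sub-alphabet, is exactly the reading word of the corresponding sub-array of cells, applying $P$ (a complete Knuth invariant, with $P(\text{reading word of }t)=t$ for a tableau $t$ by Exercise~\ref{exercise:tableau-word}) yields both assertions.

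Now the column word $v_C$ interleaves $v_A$ with the shift $\hat v_B$ of $v_B$ by $a$ column by column, so $(v_C)_{\le a}=v_A$ and $(v_C)_{>a}=\hat v_B$. The lemma therefore gives $Q_{\le a}=P(v_A)$ and identifies $P(\hat v_B)$ with $P$ of the reading word of the skew part $Q_{>a}$. Hence both constraints $P(v_A)=t_\alpha$ and $P(v_B)=t_\beta$ are conditions on $Q$ alone. Grouping the matrices $C$ by the value of $Q$ and invoking Lemma~\ref{lemma:section} a second time (the $x^{u_C}$ over all $C$ with a fixed $Q$ sum to $s_{\shape(Q)}$), I obtain
\begin{displaymath}
  s_\alpha s_\beta=\sum_\lambda \#\{\,Q\in\Tab_{a+b}(\lambda)\,:\,Q_{\le a}=t_\alpha,\ P(\text{reading word of }Q_{>a})=\hat t_\beta\,\}\;s_\lambda.
\end{displaymath}
Once $Q_{\le a}=t_\alpha$ is imposed, the remaining entries of $Q$ all exceed those of $t_\alpha$, so $Q$ is semistandard precisely when its skew part is; shifting those entries down by $a$ identifies such $Q$ with semistandard skew tableaux of shape $\lambda/\alpha$ whose reading word is Knuth-equivalent to $t_\beta$. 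That count is exactly $c^\lambda_{\alpha\beta}$, giving the desired identity.

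The main obstacle is the sub-alphabet lemma, and specifically its upper half: whereas the small entries of $P(w)$ visibly assemble into a straight tableau, the claim that the large entries form a skew tableau rectifying to $P(w_{>a})$ is the genuine content, and it is exactly what forces the Littlewood--Richardson (Knuth-equivalence) condition to appear. Everything else---the stacking bijection and the two applications of Lemma~\ref{lemma:section}---is bookkeeping.
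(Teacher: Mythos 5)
Your proposal is correct and follows essentially the same route as the paper: both arguments stack the matrices $A$ and $B$ into $C=\binom{A}{B}$ so that $s_\alpha s_\beta=\sum_C x^{u_C}$, regroup the sum by the tableau $P(v_C)$, and apply Lemma~\ref{lemma:section} a second time. Your ``sub-alphabet lemma'' is exactly the paper's observation that the restriction maps $\pi_a,\pi_b$ to the lower and upper parts of the alphabet preserve the Knuth relations, so that $P(v_A)=\pi_a(P(v_C))$ is the straight part of the recording tableau and $\pi_b(P(v_C))$ is a skew tableau whose reading word rectifies to $t_\beta$.
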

\begin{proof}
  Let $t_\alpha\in \Tab_a(\alpha)$ for some integer $a$.
  By Lemma~\ref{lemma:section}, we have:
  \begin{align}
    \nonumber s_\alpha s_\beta & = \sum_{\{A_{a\times n}\mid P(v_{A}) = t_\alpha\}} x^{u_{A}} \sum_{\{B_{b\times n}\mid P(v_B) = t_\beta\}} x^{u_{B}}\\
    \label{eq:sum-C} & = \sum_{C_{\alpha\beta}} x^{u_C},
  \end{align}
  where
  \begin{displaymath}
    C_{\alpha\beta} = \{\tbinom AB\mid P(v_A)=t_\alpha\text{ and } P(v_B)=t_\beta\}.
  \end{displaymath}
  Define a monoid homomorphism $\pi_a:L_{a+b}^*\to L^*_a$ by taking $\pi_a(w)$ to be the word obtained by discarding all letters in $w$ that are not in $\{1,\dotsc,a\}$.
  Also define $\pi_b:L_{a+b}^*\to L_b^*$ by taking $\pi_b(w)$ to be the word obtained discarding all letters in $w$ that are not in $\{a+1,\dotsc, a+b\}$, and then replacing the letter $a+i$ by $i$.
  It is not hard to see that $\pi_a$ and $\pi_b$ preserve the Knuth relations (\ref{eq:k1}) and (\ref{eq:k2}).
  Let $C=\tbinom AB$.
  The words $v_A$ and $v_B$ are obtained from $v_C$ as follows:
  \begin{displaymath}
    v_A = \pi_a(v_C) \text{ and } v_B = \pi_b(v_C).
  \end{displaymath}
  So $P(v_A) = P(\pi_a(v_C)) \equiv \pi_a(P(v_C))$.
  Since $\pi_A$ is a restriction to the first few letters of $L_{a+b}$, $\pi_a(P(v_C))$ is still a tableau.
  So $P(v_A) = \pi_a(P(v_C))$.
  Also, $P(v_B) \equiv \pi_b(P(v_c))$.
  Therefore
  \begin{align*}
    C_{\alpha\beta} & = \{C \mid \pi_a(P(v_C)) = t_\alpha \text{ and } \pi_b(P(v_C))\equiv  t_\beta\}\\
    & = \coprod_t \{C\mid P(v_C)=t\},
  \end{align*}
  the disjoint union being over
  \begin{displaymath}
    \{t\in \Tab(L_{a+b})\mid \pi_a(t)=t_\alpha \text{ and } \pi_b(t)\equiv t_\beta\},
  \end{displaymath}
  Here $\Tab(L_n)$ is used to denote the set of all semistandard tableaux in $L_n^*$.
  Let $t$ be a tableau $t$ in the above set of shape $\lambda$.
  The entries of this tableau in the cells of $\alpha$ are completely fixed by the condition $\pi_a(t)=t_\alpha$.
  Subtracting $a$ from the remaining entries gives rise to a skew-tableau of shape $\lambda/\alpha$ whose reading word is equivalent to $t_\beta$.
  The number of such tableaux is, by definition, $c^\lambda_{\alpha\beta}$. Therefore,
  \begin{align*}
    s_\alpha s_\beta &= \sum_\lambda \sum_{\{t\in \Tab_b(\lambda/\alpha)\mid t\equiv t_\beta\}} \sum_{\{C\mid P(v_C)=t\}} x^{u_C}\\
    &=\sum_\lambda c^\lambda_{\alpha\beta} s_\lambda \hspace{3cm}\text{ [using Lemma~\ref{lemma:section}]}
  \end{align*}
  as required.
\end{proof}
\begin{definition}
  [Yamanouchi Word]
  A word $w\in L_n^*$ is called a Yamanouchi word if $x^u$ is a monomial with weakly increasing powers for every suffix $u$ of $w$.
\end{definition}
\begin{lemma}
  \label{lemma:yamanouchi}
  A word $w$ is a Yamanouchi word of type $\lambda$ if and only if its plactic class contains the unique semistandard tableau $t^0_\lambda$ of shape $\lambda$ and type $\lambda$.
\end{lemma}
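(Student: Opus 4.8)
The plan is to deduce both implications from a single structural fact: the content of a word and its being Yamanouchi are both invariant under the Knuth relations (\ref{eq:k1}) and (\ref{eq:k2}). Invariance of content is immediate, since a Knuth move only permutes three adjacent letters. To handle the Yamanouchi property I would first unwind the definition: writing $\#_i(u)$ for the number of $i$'s in a word $u$, a word $w$ is Yamanouchi exactly when every suffix $u$ of $w$ satisfies $\#_i(u)\ge \#_{i+1}(u)$ for all $i$. Taking $u=w$ shows that a Yamanouchi word automatically has partition content, so ``Yamanouchi of type $\lambda$'' is only a genuine constraint once $\lambda$ is a partition, as it is here.

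For the invariance I would pass to two letters. For each $i$, let $w|_i$ be the word obtained from $w$ by erasing every letter outside $\{i,i+1\}$; then the displayed condition for that single index $i$ says precisely that every suffix of $w|_i$ has at least as many $i$'s as $(i+1)$'s, so $w$ is Yamanouchi if and only if each $w|_i$ satisfies this two-letter ``reverse ballot'' condition. The first thing to verify is that erasing carries a Knuth move on $w$ either to a Knuth move on $w|_i$ or to no change at all; the only dangerous case, in which a move would transpose two surviving letters, is ruled out by the strict inequality built into (\ref{eq:k1}) and (\ref{eq:k2}). It then remains to inspect the two possible shapes of Knuth move in a two-letter alphabet, namely $i(i+1)i\leftrightarrow(i+1)ii$ and $(i+1)i(i+1)\leftrightarrow(i+1)(i+1)i$, and to check on the three affected suffixes that each preserves the reverse-ballot condition. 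This bookkeeping is the heart of the matter and the step I expect to be the main obstacle; everything else is formal.

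Granting the invariance, the lemma follows quickly. The reading word of $t^0_\lambda$ is $l^{\lambda_l}\dotsb 2^{\lambda_2}1^{\lambda_1}$, and the content of any suffix is weakly decreasing (it consists of an initial segment of the parts of $\lambda$, with at most the last part diminished); hence $t^0_\lambda$ is Yamanouchi of type $\lambda$, and by invariance so is every $w\equiv t^0_\lambda$, which gives the implication $(\Leftarrow)$. For $(\Rightarrow)$, let $w$ be Yamanouchi of type $\lambda$. Since $w\equiv P(w)$, invariance makes $P(w)$ a Yamanouchi tableau of type $\lambda$, so it is enough to show that the only Yamanouchi tableau of a given type is the superstandard one. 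I would induct on the number of rows: the one-letter suffix formed by the last letter of the reading word forces that letter---the top-right entry---to equal $1$, and since columns are strict the entire top row must consist of $1$'s. Deleting this row and subtracting $1$ from all remaining entries yields a tableau of type $(\lambda_2,\dotsc)$ whose reading word is again Yamanouchi, because the deleted $1$'s form a terminal block of the reading word and the suffix inequalities transfer verbatim. By induction this smaller tableau is $t^0_{(\lambda_2,\dotsc)}$, whence $P(w)=t^0_\lambda$ and $w\equiv t^0_\lambda$, as required.
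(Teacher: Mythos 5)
Your proposal is correct and follows essentially the same route as the paper, whose entire proof is the two-line sketch ``check that Knuth relations preserve Yamanouchiness; the only Yamanouchi tableau of type $\lambda$ also has shape $\lambda$'' --- precisely the two facts you establish. Your reduction to the two-letter subwords $w|_i$ and the row-by-row induction for uniqueness are just careful implementations of those same steps, and the remaining suffix-count bookkeeping you flag does go through.
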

\begin{proof}
  Check that Knuth relations preserve \emph{Yamanouchiness}.
  The only Yamanouchi tableau of type $\lambda$ also has shape $\lambda$.
\end{proof}
In view of Lemma~\ref{lemma:yamanouchi}, the Littlewood-Richardson rule becomes:
\begin{theorem}
  For partition $\alpha,\beta,\lambda$, $c^\lambda_{\alpha\beta}$ is the number of semistandard tableaux of shape $\lambda/\alpha$ and type $\beta$ whose reading word is a Yamanouchi word.
\end{theorem}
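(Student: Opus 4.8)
The plan is to deduce this reformulation directly from the version of the Littlewood-Richardson rule already proved, using Lemma~\ref{lemma:yamanouchi} as the dictionary between the two descriptions of the objects being counted. The earlier form counts semistandard skew-tableaux of shape $\lambda/\alpha$ whose reading word is Knuth-equivalent to $t_\beta$, the unique tableau of shape and type $\beta$; in the notation of Lemma~\ref{lemma:yamanouchi} this tableau is $t^0_\beta$. So it suffices to show that the set of skew-tableaux of shape $\lambda/\alpha$ and type $\beta$ whose reading word is Yamanouchi coincides with the set of skew-tableaux of shape $\lambda/\alpha$ whose reading word is Knuth-equivalent to $t^0_\beta$.

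First I would observe that both Knuth relations (\ref{eq:k1}) and (\ref{eq:k2}) preserve the content (the multiset of letters) of a word, and hence Knuth-equivalent words have the same type. Consequently any skew-tableau whose reading word is Knuth-equivalent to $t^0_\beta$ automatically has type $\beta$, so imposing ``type $\beta$'' in the new statement costs nothing. Next I would apply Lemma~\ref{lemma:yamanouchi} with its partition taken to be $\beta$: a word $w$ of type $\beta$ lies in the plactic class of $t^0_\beta$ precisely when $w$ is a Yamanouchi word. Reading this as a statement about reading words of skew-tableaux identifies the two families of tableaux termwise, so their cardinalities agree, and both equal $c^\lambda_{\alpha\beta}$.

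The only point requiring care---rather than a genuine obstacle---is the bookkeeping of ``type'' across the two formulations. In the reformulated statement the type $\beta$ is imposed explicitly, whereas in the earlier form it is encoded implicitly through Knuth-equivalence to $t^0_\beta$; reconciling these amounts to the content-preservation remark above together with the fact that $t^0_\beta$ is the \emph{unique} tableau that is simultaneously of shape $\beta$ and type $\beta$, which is exactly what makes the Yamanouchi condition in Lemma~\ref{lemma:yamanouchi} equivalent to membership in its plactic class. Once these identifications are in place the theorem follows with no further computation.
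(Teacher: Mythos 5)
Your argument is correct and is exactly the route the paper takes: the paper states this theorem with no separate proof, saying only that it follows ``in view of Lemma~\ref{lemma:yamanouchi},'' and your write-up supplies precisely the implicit bookkeeping (Knuth relations preserve content, so Knuth-equivalence to $t^0_\beta$ forces type $\beta$, and the Yamanouchi condition then matches membership in the plactic class of $t^0_\beta$). Nothing further is needed.
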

\begin{exercise}
  If $c^\lambda_{\alpha\beta}>0$ then $\lambda\supset \alpha$ and $\lambda\supset \beta$.
\end{exercise}
\begin{exercise}
  Suppose $\lambda=(\lambda_1,\dotsc,\lambda_l)$, with $\lambda_1\leq m$.
  Let $\check\lambda=(m-\lambda_l,\dotsc,m-\lambda_1)$, and let $\Lambda$ denote the partition $(l,\dotsc,l)$ (with $m$ repetitions of $l$).
  Show that $c^\Lambda_{\lambda\mu}=1$.
\end{exercise}
\subsection{Skew-Schur polynomials and the Littlewood-Richardson Rule}
\label{sec:skew-lr}
Littlewood-Richardson coefficients also answer the question of expanding skew-Schur polynomials in terms of Schur polynomials:
\begin{theorem}
  [Expansion of skew-Schur polynomials]
  \label{theorem:skew-exp}
  Let $\lambda$, $\alpha$ and $\beta$ be partitions such that $\lambda\supset \alpha$.
  Then
  \begin{displaymath}
    s_{\lambda/\alpha}(x_1,\dotsc,x_n) = \sum_\beta c^\lambda_{\alpha\beta}s_\beta(x_1,\dotsc,x_n).
  \end{displaymath}
\end{theorem}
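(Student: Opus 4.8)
The plan is to begin from the combinatorial description of the skew-Schur polynomial established in~\eqref{eq:skew-kostka},
\[
  s_{\lambda/\alpha}(x_1,\dotsc,x_n) = \sum_{t\in\Tab_n(\lambda/\alpha)} x^t,
\]
and to sort the skew tableaux on the right by the plactic class of their reading words. For a skew tableau $t$, write $w(t)$ for its reading word. Two facts from the plactic theory are needed: every word is Knuth-equivalent to its insertion tableau, so $w(t)\equiv P(w(t))$; and the evaluation map is constant on plactic classes. The second gives $x^t = x^{w(t)} = x^{P(w(t))}$, and the first (together with the fact that each plactic class contains a unique semistandard tableau) shows that $t\mapsto P(w(t))$ assigns to every $t\in\Tab_n(\lambda/\alpha)$ a well-defined straight-shape tableau $u$ with entries in $\{1,\dotsc,n\}$. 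Grouping the sum by $u$ yields
\[
  s_{\lambda/\alpha}(x_1,\dotsc,x_n) = \sum_u N(u)\,x^u,\qquad N(u):=\#\{t\in\Tab_n(\lambda/\alpha)\mid P(w(t))=u\},
\]
the outer sum running over all semistandard tableaux $u$ with entries in $\{1,\dotsc,n\}$.

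The crux is to compute $N(u)$ and, above all, to show it depends only on $\shape(u)$. Here I would invoke the Littlewood--Richardson rule in exactly the generality in which it was proved: for an \emph{arbitrary} tableau $t_\beta\in\Tab_n(\beta)$, the coefficient $c^\lambda_{\alpha\beta}$ equals the number of skew tableaux of shape $\lambda/\alpha$ whose reading word is Knuth-equivalent to $t_\beta$. Since $u$ is a tableau, $P(w(t))=u$ is equivalent to $w(t)\equiv u$; taking $t_\beta:=u$ therefore identifies $N(u)=c^\lambda_{\alpha\beta}$ with $\beta=\shape(u)$. The decisive point is that the left-hand side $s_\alpha s_\beta$ of the Littlewood--Richardson rule does not depend on the chosen representative $t_\beta$, and the Schur polynomials are linearly independent; hence this count is the same for every $u\in\Tab_n(\beta)$. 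This is precisely the shape-independence of $N(u)$.

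With this in hand, I would finish by grouping the tableaux $u$ according to their shape $\beta$. For fixed $\beta$, as $u$ ranges over all of $\Tab_n(\beta)$, Kostka's definition of Schur polynomials (Corollary~\ref{corollary:kostka-def-schur}) gives $\sum_{u\in\Tab_n(\beta)} x^u = s_\beta(x_1,\dotsc,x_n)$, so
\[
  s_{\lambda/\alpha}(x_1,\dotsc,x_n) = \sum_\beta\sum_{u\in\Tab_n(\beta)} c^\lambda_{\alpha\beta}\,x^u = \sum_\beta c^\lambda_{\alpha\beta}\,s_\beta(x_1,\dotsc,x_n),
\]
as required. I expect the only real obstacle to be the recognition in the middle step that the already-proved Littlewood--Richardson rule supplies the shape-independence for free: because that rule was established for an arbitrary representative tableau $t_\beta$, the number of skew tableaux of shape $\lambda/\alpha$ that rectify to a fixed tableau depends only on its shape. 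Everything else --- the combinatorial description of $s_{\lambda/\alpha}$, the plactic invariance of evaluation, and Kostka's description of $s_\beta$ --- is routine bookkeeping with results already in hand.
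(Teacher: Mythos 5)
Your proposal is correct and follows essentially the same route as the paper: both start from the combinatorial formula $s_{\lambda/\alpha}=\sum_{t\in\Tab_n(\lambda/\alpha)}x^t$, group the skew tableaux by the straight-shape tableau $P(w(t))$, identify each fiber's cardinality with $c^\lambda_{\alpha\beta}$ via the Littlewood--Richardson rule, and finish with Kostka's description of $s_\beta$. Your explicit justification that the fiber count depends only on $\shape(u)$ (via the representative-independence built into the Littlewood--Richardson rule and the linear independence of Schur polynomials) is a point the paper leaves implicit, but it is the same argument.
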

\begin{proof}
  We have:
  \begin{align*}
    s_{\lambda/\alpha}(x_1,\dotsc,x_n) & = \sum_{t\in \Tab_n(\lambda/\alpha)} x^t\\
      & = \sum_\beta \sum_{t_\beta\in \Tab_n(\beta)} \sum_{\{t\in \Tab_n(\lambda/\alpha)\mid P(t)=t_\beta\}} x^t\\
      & = \sum_\beta c^\lambda_{\alpha\beta} \sum_{t_\beta\in \Tab_n(\beta)} x^t\\
      & = \sum_\beta c^\lambda_{\alpha\beta} s_\beta,
  \end{align*}
  as required.
\end{proof}
\begin{exercise}
  For partitions $\alpha\subset \lambda$, show that
  \begin{displaymath}
    K_{\lambda/\alpha,\mu} = \sum_\beta c^\lambda_{\alpha\beta} K_{\beta\mu}.
  \end{displaymath}
  Here $K_{\lambda/\alpha}$ is the number of skew-tableaux of shape $\lambda/\alpha$ and type $\mu$.
\end{exercise}
\subsection{Sources}
\label{sec:notes-literature}
The arrangement of topics in these notes loosely follows the first Chapter of Manivel's book \cite{manivel}.
The use of labelled abaci to prove Schur polynomial identities is from Loehr \cite{loehr}.
A nice exposition of the LGV lemma can be found in Viennot's 2016 lectures at The Institute of Mathematical Sciences, Chennai \cite{imsc2016}.
The LGV-lemma proof of the Giambelli identity has been published by Stembridge \cite{stembridge}.
The treatment of the RSK correspondence and the Littlewood-Richardson rule is guided by Lascoux and Sch\"utzenberger\cite{ls}, and Lascoux, Leclerc and Thibon \cite{llt}.
However, I have not seen the precise definition of $\rsk$ and $\rsk^*$ that I used in these notes published elsewhere.
My use of integer matrices to prove the Littlewood-Richardson rule does not appear to be widespread.
Exercise~\ref{exercise:burge} describes a correspondence introduced by Burge in \cite{burge}.
Besides the above references, readers who wish to go deeper into the subject may consult Fulton's book on Young tableaux \cite{fulton}, Chapter~7 and Fomin's appendix in the second volume of Stanley's book on Enumerative Combinatorics \cite{ec2}.
The relationship between the RSK correspondence, its dual, symmetric functions, and representation theory is the subject of my own book on Representation Theory \cite{rtcv}, which is written at a relatively elementary level.
Finally, there is no single book which has had a greater impact on the theory of symmetric functions that Macdonald's classic \cite{macd}.

Many details of proofs and perspectives emerged in discussions with Digjoy Paul.
I benefited greatly from informal discussions with Bishal Deb, Sudhir Ghorpade, Sridhar P.~Narayanan, K.~N.~Raghavan, Vijay Ravikumar, Evgeny Smirnov, and  S.~Viswanath, and indeed all the participants of the ATM Workshop on Schubert Varieties, held at The Institute of Mathematical Sciences (see \url{https://www.atmschools.org/2017/atmw/sv}).
Darij Grinberg sent me a list of corrections to an earlier version of this article.
\bibliographystyle{abbrv}
\bibliography{refs}
\end{document}